\newcommand{\B}{\mathcal{B}}
\DeclareMathOperator{\Aut}{Aut}
\newtheorem{theorem}{Theorem}[section]
\newtheorem{proposition}[theorem]{Proposition}
\newtheorem{lemma}[theorem]{Lemma}
\newtheorem{corollary}[theorem]{Corollary}
\newtheorem{conjecture}[theorem]{Conjecture}
\newtheorem{remark}[theorem]{Remark}
\newtheorem{definition}[theorem]{Definition}
\newtheorem{problem}{Problem}[section]
\theoremstyle{definition}
\newtheorem{example}[theorem]{Example}
\theoremstyle{remark}
\newenvironment{Proof}{\noindent {\bf Proof.} \ }{\hfill $\Box$ \bigskip}
\begin{document}

\title{The Sierpi\'nski product of graphs }

\author{
  Jurij Kovi\v{c},   
  Toma\v{z} Pisanski, 
  Sara Sabrina Zemlji\v c and 
  Arjana \v{Z}itnik        
}
\date{April 1, 2019}

\maketitle

\begin{abstract}
In this paper we introduce a product-like operation that generalizes the construction of generalized Sierpi\'nski graphs.
Let $G,H$ be graphs and let $f: V(G) \to V(H)$ be a function. Then the {\em Sierpi\'nski product of $G$ and $H$ with respect to $f$} is defined as a pair $(K,\varphi)$, where $K$ is a graph on the vertex set  $V(G) \times V(H)$ with two types of edges:
\begin{itemize}
\item $\{(g,h),(g,h')\}$ is an edge in $K$ for every 
$g\in V(G)$ and every 
$\{h,h'\}\in E(H)$,
\item $\{(g,f(g'),(g',f(g))\}$ is an edge in $K$ for every edge $\{g,g'\} \in E(G)$; 
\end{itemize}  
and $\varphi: V(G) \to V(K)$ is a function that maps every vertex $g \in V(G)$ to the vertex $(g,f(g)) \in V(K)$. Graph $K$ will  be denoted  by $G \otimes_f H$. Function $\varphi$ is needed to define the product of more than two factors.
By applying this operation $n$ times to the same graph 
we obtain the $n$-th generalized Sierpi\'nski graph. 

Some basic properties of the Sierpi\'nski product are presented.
In particular, we show that $G \otimes_f H$ is connected if and only if
both $G$ and $H$ are connected and we present some necessary and
sufficient conditions that $G,H$ must fulfill in order for $G \otimes_f H$ to be planar.
As for symmetry properties, we show which automorphisms of $G$ and $H$
extend to automorphisms of $G \otimes_f H$. In many  cases we can
also describe the whole automorphism group of $G \otimes_f H$. 
\end{abstract}

\medskip
\noindent
{\sc Keywords}: Sierpi\'nski graphs, graph products, connectivity, planarity, symmetry.
\medskip

\noindent
{\sc Math. Subj. Class. (2010)}: 
05C76,  
05C12,  
05C10   

\section{Introduction} 

The family of \emph{Sierpi\'nski graphs} $S_p^n$ was first introduced 
by Klav\v zar and Milutinovi\'c in~\cite{KM1997} as a variant of the  Tower of Hanoi problem.
They can be defined recursively as follows:
$S_p^1$ is isomorphic to the complete graph $K_p$ and $S_p^{n+1}$ is constructed  from $p$ copies of $S_p^{n}$ by adding exactly one edge between every pair of copies of $S_p^{n+1}$.
Sierpi\'nski graphs $S_3^{1},S_3^{2}$, and $S_3^{3}$ are depicted in Figure~\ref{fig:triangle}.
In the ``classical'' case, when $p=3$, the Sierpi\'nski graphs are isomorphic to Hanoi graphs.
More about  Sierpi\'nski graphs and their connections to the Hanoi graphs can be found in the recent second edition of the book about the Tower of Hanoi puzzle
by Hinz et al.~\cite{HKMP2018}. 

Sierpi\'nski graphs have  been extensively studied in most graph-theoretical aspects 
as well as in other areas of mathematics and even psychology. 
Some notable papers are \cite{HKZ2013,Huang,Jakovac,KlaMiPetr,KZ2013,Parisse,Xue1,Xue2}.
An extensive summary of topics studied on and around Sierpi\'nski graphs is available in the survey paper~by Hinz, Klav\v{z}ar and Zemlji\v{c} \cite{HKZ2017}. 
In that paper the authors introduced {\em Sierpi\'nski-type graphs} as graphs that are derived from or lead to the Sierpi\'nski triangle fractal.

\begin{figure}[!htbp]
\centering
\begin{tikzpicture}[style=thick,scale=0.6,x=1.0cm,y=1.0cm]
	\def\vr{2.5pt/0.6}
	\draw (0,0)--(7,0)--(3.5,6.1)--cycle;
	\draw  [fill=white] (0,0) circle (\vr);
	\draw  [fill=white] (7,0) circle (\vr);
	\draw  [fill=white] (3.5,6.1) circle (\vr);
\end{tikzpicture}
\quad
\begin{tikzpicture}[style=thick,scale=0.6,x=1.0cm,y=1.0cm]
	\def\vr{2.5pt/0.6}
	\draw (0,0)--(7,0)--(3.5,6.1)--cycle;
	\draw  (2,3.5)--(5,3.5);
	\draw  (5.5,2.6)--(4,0);
	\draw  (3,0)--(1.5,2.6);
	\draw  [fill=white] (0,0) circle (\vr);
	\draw  [fill=white] (7,0) circle (\vr);
	\draw  [fill=white] (3.5,6.1) circle (\vr);
	\draw  [fill=white] (3,0) circle (\vr);
	\draw  [fill=white] (1.5,2.6) circle (\vr);
	\draw  [fill=white] (4,0) circle (\vr);
	\draw  [fill=white] (5.5,2.6) circle (\vr);
	\draw  [fill=white] (5,3.5) circle (\vr);
	\draw  [fill=white] (2,3.5) circle (\vr);
\end{tikzpicture}
\quad
\begin{tikzpicture}[style=thick,scale=0.6,x=1.0cm,y=1.0cm]
	\def\vr{2.5pt/0.6}
	\draw (0,0)--(7,0);
	\draw  (7,0)--(3.5,6.1);
	\draw  (3.5,6.1)--(0,0);
	\draw  (2,3.5)--(5,3.5);
	\draw  (5.5,2.6)--(4,0);
	\draw  (3,0)--(1.5,2.6);
	\draw  (0.5,0.9)--(1,0);
	\draw  (2,0)--(2.5,0.9);
	\draw  (2,1.7)--(1,1.7);
	\draw  (4.5,0.9)--(5,0);
	\draw  (6,0)--(6.5,0.9);
	\draw  (6,1.7)--(5,1.7);
	\draw  (2.5,4.3)--(3,3.5);
	\draw  (4,3.5)--(4.5,4.3);
	\draw  (4,5.2)--(3,5.2);
	\draw  [fill=white] (0,0) circle (\vr);
	\draw  [fill=white] (7,0) circle (\vr);
	\draw  [fill=white] (3.5,6.1) circle (\vr);
	\draw  [fill=white] (3,0) circle (\vr);
	\draw  [fill=white] (1.5,2.6) circle (\vr);
	\draw  [fill=white] (4,0) circle (\vr);
	\draw  [fill=white] (5.5,2.6) circle (\vr);
	\draw  [fill=white] (5,3.5) circle (\vr);
	\draw  [fill=white] (2,3.5) circle (\vr);
	\draw  [fill=white] (1,0) circle (\vr);
	\draw  [fill=white] (0.5,0.9) circle (\vr);
	\draw  [fill=white] (2,0) circle (\vr);
	\draw  [fill=white] (2.5,0.9) circle (\vr);
	\draw  [fill=white] (5,0) circle (\vr);
	\draw  [fill=white] (4.5,0.9) circle (\vr);
	\draw  [fill=white] (6,0) circle (\vr);
	\draw  [fill=white] (6.5,0.9) circle (\vr);
	\draw  [fill=white] (6,1.7) circle (\vr);
	\draw  [fill=white] (4,5.2) circle (\vr);
	\draw  [fill=white] (4.5,4.3) circle (\vr);
	\draw  [fill=white] (2,1.7) circle (\vr);
	\draw  [fill=white] (1,1.7) circle (\vr);
	\draw  [fill=white] (5,1.7) circle (\vr);
	\draw  [fill=white] (2.5,4.3) circle (\vr);
	\draw  [fill=white] (3,3.5) circle (\vr);
	\draw  [fill=white] (4,3.5) circle (\vr);
	\draw  [fill=white] (3,5.2) circle (\vr);
\end{tikzpicture}
\caption{Sierpi\'nski graphs $S_3^1$, $S_3^2$, and $S_3^3$.}
\label{fig:triangle}
\end{figure}
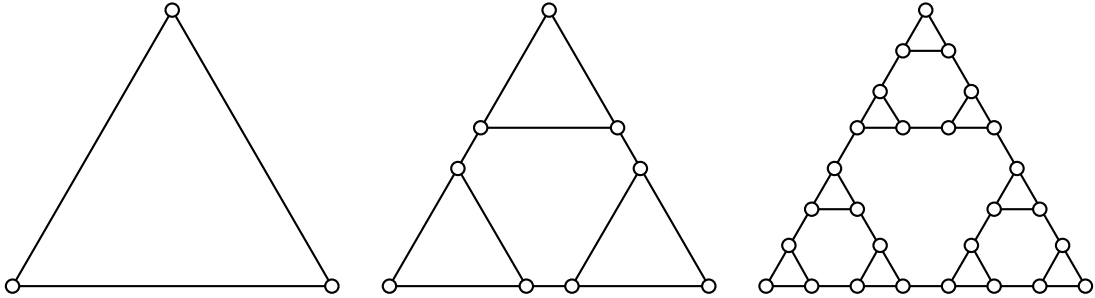

Recently these families of graphs have been generalized by Gravier, Kov\v se and Parreau
to a family called \emph{generalized Sierpi\'nski graphs}~\cite{Kovse}.
Instead of iterating a complete graph they start with an arbitrary graph $G$ and form a 
self-similar graph in the same way as Sierpi\'nski graphs are derived from a complete graph. See Figure~\ref{fig:house} for an example of the second iteration of a generalized Sierpi\'nski graph, where the base graph is a house. For a given graph $G$, $S_G^n$ denotes the $n$-th iteration generalized Sierpi\'nski graph. 

\begin{figure}[!htbp]
\centering
\begin{tikzpicture}[style = thick,x=1.0cm,y=1.0cm, scale=0.8]
	\def\vr{2.5pt/0.8}
	\draw (2,5.2)--(0,3.2)--(0,0)--(4,0)--(4,3.2)--cycle;
	\draw (0,3.2)--(4,3.2);
	\draw [fill=white] (2,5.2) circle (\vr);
	\draw [fill=white] (0,3.2) circle (\vr);
	\draw [fill=white] (0,0) circle (\vr);
	\draw [fill=white] (4,0) circle (\vr);
	\draw [fill=white] (4,3.2) circle (\vr);
\end{tikzpicture}
\qquad \qquad
\begin{tikzpicture}[style = thick,x=1.0cm,y=1.0cm, scale=0.8]
	\def\vr{2.5pt/0.8}
	\draw (0,2.4)--(0,0.8); 
	\draw (1,0)--(3,0); 
	\draw (4,0.8)--(4,2.4); 
	\draw (2.5,4.7)--(3.5,3.7); 
	\draw (1.5,4.7)--(0.5,3.7); 
	\draw (1,3.2)--(3,3.2); 
	\draw (1.5,4.7)--(1.5,3.9)--(2.5,3.9)--(2.5,4.7)--(2,5.2)--(1.5,4.7)--(2.5,4.7);
	\draw [fill=white] (2,5.2) circle (\vr);
	\draw [fill=white] (1.5,4.7) circle (\vr);
	\draw [fill=white] (1.5,3.9) circle (\vr);
	\draw [fill=white] (2.5,3.9) circle (\vr);
	\draw [fill=white] (2.5,4.7) circle (\vr);
	\draw (0,3.2)-- (0,2.4)--(1,2.4)--(1,3.2)--(0.5,3.7)--(0,3.2)--(1,3.2);
	\draw [fill=white] (0.5,3.7) circle (\vr);
	\draw [fill=white] (0,3.2) circle (\vr);
	\draw [fill=white] (0,2.4) circle (\vr);
	\draw [fill=white] (1,2.4) circle (\vr);
	\draw [fill=white] (1,3.2) circle (\vr);
	\draw (0,0.8)--(0,0)--(1,0)--(1,0.8)--(0.5,1.3)--(0,0.8)--(1,0.8);
	\draw [fill=white] (0.5,1.3) circle (\vr);
	\draw [fill=white] (0,0.8) circle (\vr);
	\draw [fill=white] (0,0) circle (\vr);
	\draw [fill=white] (1,0) circle (\vr);
	\draw [fill=white] (1,0.8) circle (\vr);
	\draw (3,0.8)--(3,0)--(4,0)--(4,0.8)--(3.5,1.3)--(3,0.8)--(4,0.8);
	\draw [fill=white] (3.5,1.3) circle (\vr);
	\draw [fill=white] (3,0.8) circle (\vr);
	\draw [fill=white] (3,0) circle (\vr);
	\draw [fill=white] (4,0) circle (\vr);
	\draw [fill=white] (4,0.8) circle (\vr);
	\draw (3,3.2)--(3,2.4)--(4,2.4)--(4,3.2)--(3.5,3.7)--(3,3.2)--(4,3.2);
	\draw [fill=white] (3.5,3.7) circle (\vr);
	\draw [fill=white] (3,3.2) circle (\vr);
	\draw [fill=white] (3,2.4) circle (\vr);
	\draw [fill=white] (4,2.4) circle (\vr);
	\draw [fill=white] (4,3.2) circle (\vr);
\end{tikzpicture}
\caption{Generalized Sierpi\'nski graphs $S_G^1$  and $S_G^2$ when $G$ is the house graph.}
\label{fig:house}
\end{figure}
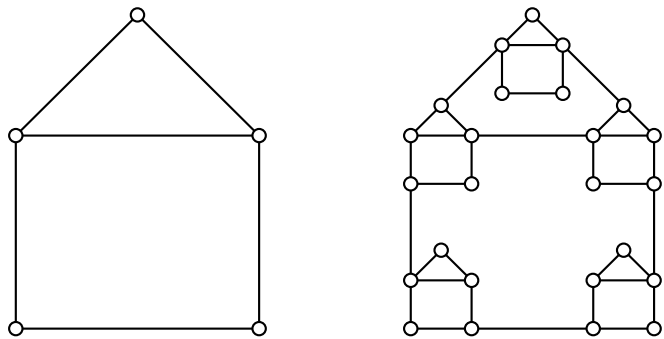



The generalized Sierpi\'nski graphs have been extensively studied in the past few years. 
A few years after they were introduced in 2011 the first two papers appeared at the same time. 
Geetha and Somasundaram~\cite{geetha-2015} studied their total chromatic number while Rodr\'{i}guez-Vel\'{a}zquez and Tom\'{a}s-Andreu~\cite{rodriguez-2015} examined their Randi\'c index. Shortly afterwards several papers followed on similar topics, but also on the chromatic number, vertex cover number, clique number, and domination number, see~\cite{rodriguez-2016+}. 

Metric properties have always presented intriguing problems in the family of Sierpi\'nski-type graphs 
mostly due to their connection to the Hanoi graphs. 
Namely a solution to the Tower of Hanoi problem may be modelled as a shortest path problem
on the corresponding Hanoi graph.
Therefore it is not surprising that metric properties of generalized Sierpi\'nski graphs have been studied as well. 
In ~\cite{estrada-moreno-2016b+} Estrada-Moreno, Rodr\'{i}guez-Bazan   and Rodr\'{i}guez-Vel\'{a}zquez 
investigate distances and present, among other results, an algorithm for determining the distance between an extreme vertex and an arbitrary vertex of a generalized Sierpi\'nski graph. 
In  the recent paper~\cite{AEKP}  Alizadeh et al.  investigate metric properties for generalized Sierpi\'nski graphs where the base graph is a star graph. 

At this point we would like to mention another approach towards the Sierpi\'nski graphs. 
The graphs $S_3^n$  appear naturally locally by applying a series of truncations of maps;
see Pisanski and Tucker \cite{PisanskiTucker} and Alspach and Dobson \cite{AlspachDobson}. 
For a cubic graph $G$ this is equivalent to applying a series of truncations to $G$,
where the truncation of $G$ is the line graph  of the subdivision graph of $G$. 
For any graph and the neighbourhood of vertex of valence $d$ the repeated truncation 
looks like $S_d^n$. 
A related construction, called the \emph{clone cover}, is considered 
by Malni\v{c}, Pisanski and \v{Z}itnik in  \cite{MPZ2015}.

In this paper we generalize the generalized Sierpi\'nski graphs even further. 
Instead of taking just one graph, we take two (or multiple)  graphs 
and present the operation that yields $S_G^n$ from $S_G^{n-1}$ and $G$ as a product. 
If we take two  graphs $G$ and $H$, the resulting product  locally has the structure of $H$, 
but globally it is similar to $G$. We call such a product operation the {\em Sierpi\'nski product}. 

The Sierpi\'nski product shows some features of classical graph products \cite{HIK-2011},
the most important being that the vertex set of the Sierpi\'nski product of graphs $G$ and $H$ is 
$V(G) \times V(H)$. However, one needs some extra information outside $G$ and $H$ to
complete the definition of the Sierpi\'nski product of graphs $G$ and $H$. This information
can be encoded as a function $f: V(G) \to V(H)$. 
Furthermore, the product is defined so that we can extend it to multiple factors.
We will see  that by definition the Sierpi\'nski product of two graphs is always
a subgraph of their lexicographic product. 

The paper is organized as follows. 
In Section 2 we give a formal definition of  the Sierpi\'nski product  of  graphs
$G$ and $H$ with respect to  $f: V(G) \to V(H)$, this product is denoted by $G \otimes_f H$.
We explore some  graph-theoretical properties such as connectedness and planarity
of the Sierpi\'nski product.
In particular, we show that $G \otimes_f H$ is connected if and only if
both $G$ and $H$ are connected and we present some necessary and
sufficient conditions that $G,H$ must fulfill in order for $G \otimes_f H$ to be planar.
In Section 3 we study symmetries of  the Sierpi\'nski product of two graphs.
We focus on the automorphisms of $G \otimes_f H$ 
that arise from 
the automorphisms of its factors
and study  the group, generated by these automorphisms. 
In many  cases we can also describe the whole automorphism group of $G \otimes_f H$. 
Finally in Section 4 we consider  the Sierpi\'nski product of more than two graphs. 
In the special case when we have $n$ equal factors, say equal to $G$, 
and $f:V(G) \to V(G)$ is the identity function, their Sierpi\'nski product
is equal to $S_G^n$.

\section{Definition of  the Sierpi\'nski product and basic properties}
\label{section:definitions}

Let us first review some necessary notions.
All the graphs we consider are undirected and simple.
Let $G$ be a graph and $x$ be a vertex of $G$.
By $N(x)$ we denote the set of vertices of $G$ that are adjacent to $x$, i.e., the neighborhood of $x$. 
Vertices in this paper will usually be tuples, but instead of writing them in vector form $(x_m,\ldots, x_1)$, we will usually write them as words $x_m\ldots x_1$. More precisely, vertices $(0,0,0)$ or $(0,(0,0))$ will simply be denoted by $000$, except in the case when we will emphasize their origins.
The number of vertices of a graph $G$, i.e., the order of $G$, will be denoted by $|G|$, and the number of edges of $G$, i.e., the size of $G$, will be denoted by $||G||$.

\begin{definition}
\label{def:sierpinski-product}
Let $G,H$ be graphs and let $f: V(G) \to V(H)$ be a function. Then the {\em Sierpi\'nski product of $G$ and $H$ with respect to $f$} is defined as a pair $(K,\varphi)$, where $K$ is a graph on the vertex set  $V(K)=V(G) \times V(H)$ with two types of edges:
\begin{itemize}
\item $\{(g,h),(g,h')\}$ is an edge in $K$ for every vertex $g\in V(G)$ and every edge $\{h,h'\}\in E(H)$,
\item $\{(g,f(g'),(g',f(g))\}$ is an edge in $K$ for every edge $\{g,g'\} \in E(G)$; 
\end{itemize}  
and $\varphi: V(G) \to V(K)$ is a function that maps every vertex $g \in V(G)$ to the vertex $(g,f(g)) \in V(K)$. We will denote such Sierpi\'nski product by $G \otimes_f H$. 
\end{definition}

Often when we will have only two factors, we will  be interested only in the graph $K$ and not the embedding $\varphi$ of $G$ into $K$. 
In such cases we will use the notation $K = G \otimes_f H$.
If $V(G) \subseteq V(H)$ and $f$ is the identity function on its domain, we will skip the index $f$ 
and denote the Sierpi\'nski product of $G$ and $H$ simply by $G \otimes H$. 
The role of function  $\varphi$ will become clear in Section \ref{section:morefactors}. 
Note that there are no restrictions on function $f:V(G) \to V(H)$.
However, sometimes it is convenient that for every $g,g_1,g_2 \in V(G)$
the following property holds:  if $g_1,g_2 \in N(g)$, then $f(g_1) \ne f(g_2)$. 
In this case we say that $f$ is {\em locally injective}.

The Sierpi\'nski product can be defined in a similar way also for graphs with loops and multiple edges.
In this case, a loop in $G$, say $\{g,g\}$, would correspond to a loop $\{(g,f(g)),(g,f(g))\}$ in $G\otimes_f H$ and a multiple edge in $G$ would correspond to a multiple edge in $G\otimes_f H$,
but all our graphs will 	be simple. 

Figure \ref{fig:K4_C3}, left, shows the Sierpi\'nski product of $C_3$ and $K_4$ 
with respect to function $f_1$. Vertices of $C_3$ are labeled with numbers  $0,1,2$, 
vertices of $K_4$ are labeled with numbers  $0,1,2,3$ 
and  $f_1:V(C_3) \to V(K_4)$ is the identity function on its domain.
Figure \ref{fig:K4_C3}, right, shows the Sierpi\'nski product of  $K_4$ and $C_3$ with
respect to $f_2:V(K_4) \to V(C_3)$ defined as
$f_2(4) = 3$ and $f_2(i) = i$ otherwise.
This shows that the Sierpi\'nski product is not commutative.

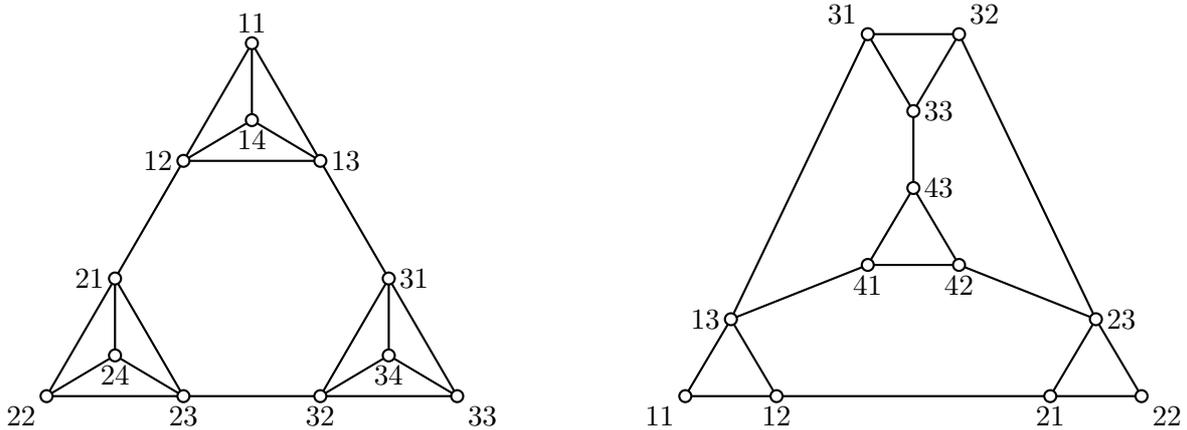
\begin{figure}[htb]
	\begin{center}
		\begin{tikzpicture}[scale=0.6,style=thick]
		\begin{small}
		\def\vr{3pt/0.8}
			\draw (3,0)--(6,0);
			\draw (7.5,2.6)--(6,5.2);
			\draw (3,5.2)--(1.5,2.6);
			\draw (3,5.2)--(6,5.2)--(4.5,7.8)--cycle;
			\draw (3,5.2)--(4.5,6.1)--(4.5,7.8);
			\draw (4.5,6.1)--(6,5.2);
			\draw [fill=white] (3,5.2) circle (\vr) node [anchor = east] {12};
			\draw [fill=white] (6,5.2) circle (\vr) node [anchor = west] {13};
			\draw [fill=white] (4.5,7.8) circle (\vr) node [anchor = south] {11};
			\draw [fill=white] (4.5,6.1) circle (\vr)node [anchor = north] {14};
			\draw (0,0)--(3,0)--(1.5,2.6)--cycle;
			\draw (1.5,2.6)--(1.5,0.9)--(0,0);
			\draw (1.5,0.9)--(3,0);
			\draw [fill=white] (0,0) circle (\vr) node [anchor = north east] {22};
			\draw [fill=white] (3,0) circle (\vr) node [anchor = north] {23};
			\draw [fill=white] (1.5,2.6) circle (\vr) node [anchor = east] {21};
			\draw [fill=white] (1.5,0.9) circle (\vr) node [anchor = north] {24};
			\draw (6,0)--(9,0)--(7.5,2.6)--cycle;
			\draw (9,0)--(7.5,0.9)--(6,0);
			\draw (7.5,2.6)--(7.5,0.9);
			\draw [fill=white] (6,0) circle (\vr) node [anchor = north] {32};
			\draw [fill=white] (9,0) circle (\vr) node [anchor = north west] {33};
			\draw [fill=white] (7.5,2.6) circle (\vr) node [anchor = west] {31};
			\draw [fill=white] (7.5,0.9) circle (\vr) node [anchor = north] {34};
			\draw (16,0)--(22,0);
			\draw (18,8)--(15,1.7);
			\draw (20,8)--(23,1.7);
			\draw (19,4.6)--(19,6.3);
			\draw (18,2.9)--(15,1.7);
			\draw (20,2.9)--(23,1.7);
			\draw (14,0)--(16,0)--(15,1.7)--cycle;
			\draw [fill=white] (14,0) circle (\vr) node [anchor = north east] {11};
			\draw [fill=white] (16,0) circle (\vr) node [anchor = north] {12};
			\draw [fill=white] (15,1.7) circle (\vr) node [anchor = east] {13};
			\draw (22,0)--(24,0)--(23,1.7)--cycle;
			\draw [fill=white] (22,0) circle (\vr) node [anchor = north] {21};
			\draw [fill=white] (24,0) circle (\vr) node [anchor = north west] {22};
			\draw [fill=white] (23,1.7) circle (\vr) node [anchor = west] {23};
			\draw (20,8)--(18,8)--(19,6.3)--cycle;
			\draw [fill=white] (18,8) circle (\vr) node [anchor = south east] {31};
			\draw [fill=white] (20,8) circle (\vr) node [anchor = south west] {32};
			\draw [fill=white] (19,6.3) circle (\vr) node [anchor = west] {33};
			\draw (18,2.9)--(20,2.9)--(19,4.6)--cycle;
			\draw [fill=white] (18,2.9) circle (\vr) node [anchor = north] {41};
			\draw [fill=white] (20,2.9) circle (\vr) node [anchor = north] {42};
			\draw [fill=white] (19,4.6) circle (\vr) node [anchor = west] {43};
		\end{small}
		\end{tikzpicture}
	\end{center}
\caption{Graphs $K_3 \otimes K_4$ and $K_4 \otimes_{f_2} K_3$, where $f_2(4) = 3$ and $f_2(i) = i$ otherwise.}
\label{fig:K4_C3}
\end{figure}

We now state some simple lemmas regarding the structure of the Sierpi\'nski product of  two graphs.
We omit most of the proofs, since they follow straight from the definition.

\begin{lemma}
\label{lem:trivial}
Let $G,H$ be graphs  and let $f: V(G) \to V(H)$ be a function.
Then the following statements hold.
\begin{itemize}
\item[{\rm (i)}] If $|G|=1$, then $G\otimes_f H$ is isomorphic to $H$.
\item[{\rm (ii)}] If $|H|=1$, then $G\otimes_f H$ is isomorphic to $G$.
\end{itemize}
\end{lemma}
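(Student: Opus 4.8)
The plan is to treat each part by exhibiting the obvious coordinate projection as an explicit isomorphism and then checking that it preserves and reflects adjacency, using Definition~\ref{def:sierpinski-product} directly. The only point requiring attention is that in each case exactly one of the two edge types collapses to the empty set, which is precisely what forces the product to reduce to a single factor.

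For part~(i), I would suppose $V(G) = \{g_0\}$. Since all our graphs are simple, a one-vertex graph has no edges, so $E(G) = \emptyset$ and there are no edges of the second type in $K = G \otimes_f H$. I would then consider the projection $\pi\colon V(K) \to V(H)$ given by $\pi(g_0, h) = h$; this is a bijection because $V(K) = \{g_0\} \times V(H)$. By the first edge rule, $\{(g_0, h), (g_0, h')\}$ is an edge of $K$ if and only if $\{h, h'\} \in E(H)$, so $\pi$ carries the edge set of $K$ exactly onto $E(H)$. Hence $\pi$ is the desired isomorphism.

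For part~(ii), I would suppose $V(H) = \{h_0\}$. Then $f$ is necessarily the constant map $g \mapsto h_0$, and $H$ has no edges, so there are no edges of the first type in $K$. Here I would use the projection $\rho\colon V(K) \to V(G)$, $\rho(g, h_0) = g$, which is a bijection since $V(K) = V(G) \times \{h_0\}$. For every edge $\{g, g'\} \in E(G)$ the second edge rule produces the edge $\{(g, f(g')), (g', f(g))\} = \{(g, h_0), (g', h_0)\}$, and these account for all the edges of $K$; thus $\rho$ maps the edge set of $K$ bijectively onto $E(G)$, giving the isomorphism.

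There is no genuine obstacle here: the statement follows straight from Definition~\ref{def:sierpinski-product}, exactly as the authors remark just before the lemma. The only things worth recording carefully are why the irrelevant edge type is empty in each case (no edges in a one-vertex simple graph for part~(i); no edges in a one-vertex $H$ for part~(ii)), and the observation that in part~(ii) the hypothesis $|H| = 1$ forces $f$ to be constant, so the choice of function plays no role in the outcome.
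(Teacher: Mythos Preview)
Your proof is correct and is exactly the direct verification from Definition~\ref{def:sierpinski-product} that the paper has in mind; the authors omit the proof with the remark that it follows straight from the definition, and your argument spells this out cleanly.
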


\begin{lemma}
\label{lem:subgraph}
Let $G,H$ be graphs  and let $f: V(G) \to V(H)$ be a function.
Let $G',H'$ be  subgraphs of $G,H$, respectively, and 
let $f'$ be the  restriction od $f$ to $V(G')$ such that ${\rm Im}(f') \subseteq V(H')$.
Then $G'\otimes_{f'} H'$ is a subgraph of $G\otimes_f H$.
\end{lemma}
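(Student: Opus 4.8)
The plan is to verify the subgraph relation directly from Definition~\ref{def:sierpinski-product}, by checking separately that the vertex set of $G' \otimes_{f'} H'$ embeds into that of $G \otimes_f H$ and that each of the two edge types is preserved. Since the product is defined purely combinatorially, no global or structural argument is needed; the whole proof reduces to a handful of routine inclusions.

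First I would settle the vertex sets. Because $G'$ and $H'$ are subgraphs of $G$ and $H$, we have $V(G') \subseteq V(G)$ and $V(H') \subseteq V(H)$, so $V(G' \otimes_{f'} H') = V(G') \times V(H') \subseteq V(G) \times V(H) = V(G \otimes_f H)$. Next I would treat the two edge types. An edge of the first type in $G' \otimes_{f'} H'$ has the form $\{(g,h),(g,h')\}$ with $g \in V(G')$ and $\{h,h'\} \in E(H')$; since $H'$ is a subgraph of $H$ we have $\{h,h'\} \in E(H)$, and since $g \in V(G)$ this is again a first-type edge of $G \otimes_f H$. An edge of the second type has the form $\{(g,f'(g')),(g',f'(g))\}$ with $\{g,g'\} \in E(G')$; here the key observation is that $f'$ is the restriction of $f$, so $f'(g) = f(g)$ and $f'(g') = f(g')$, whence this edge equals $\{(g,f(g')),(g',f(g))\}$, which is a second-type edge of $G \otimes_f H$ because $\{g,g'\} \in E(G)$. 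This exhausts the edges of $G' \otimes_{f'} H'$, giving the claimed subgraph relation.

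The only point that needs genuine care --- and the reason the hypothesis ${\rm Im}(f') \subseteq V(H')$ is imposed --- is well-definedness: without it, $f'$ would fail to be a function into $V(H')$, and the second-type edges $\{(g,f'(g')),(g',f'(g))\}$ could have endpoints outside $V(G') \times V(H')$, so the product $G' \otimes_{f'} H'$ would not even be defined. Granting that hypothesis, every vertex appearing in a second-type edge lies in $V(G') \times V(H')$, and the argument above goes through verbatim. I expect no real obstacle here; the entire content is bookkeeping about the agreement of the restricted function with $f$ and about the two edge families, so the proof can be stated in a few lines (consistent with the remark that most such proofs are omitted).
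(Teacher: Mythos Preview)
Your proof is correct and is exactly the kind of direct verification from Definition~\ref{def:sierpinski-product} that the paper intends; indeed, the paper omits the proof entirely, noting that it follows straight from the definition. Your careful remark on why the hypothesis ${\rm Im}(f') \subseteq V(H')$ is needed is a nice addition.
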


\begin{lemma}
\label{lem:HGcopy}
Let $G,H$ be graphs and let $f: V(G) \to V(H)$ be a function.
Then the following statements hold.
\begin{itemize}
\item[{\rm (i)}] Let $g$ be a vertex of $G$. Then the subgraph of $G \otimes_f H$, 
induced by the set $\{(g,h)| \, h \in V(H)\}$ is isomorphic to $H$.
\item[{\rm (ii)}] Graph $G$ is a minor of $G \otimes_f H$.
\end{itemize}
\end{lemma}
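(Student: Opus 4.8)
The plan is to handle the two parts in turn, letting part~(i) feed directly into part~(ii).

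For part~(i), I would fix the vertex $g \in V(G)$ and consider the map $\psi \colon V(H) \to \{g\} \times V(H)$ given by $\psi(h) = (g,h)$. This is plainly a bijection onto the fiber over $g$, so the only thing to verify is that it identifies $E(H)$ with the edge set of the induced subgraph. By the first clause of Definition~\ref{def:sierpinski-product}, $\{(g,h),(g,h')\}$ is an edge of $G \otimes_f H$ whenever $\{h,h'\} \in E(H)$, so $\psi$ preserves adjacency. For the reverse inclusion I would note that a second-type edge $\{(g,f(g')),(g',f(g))\}$ has distinct first coordinates, since $\{g,g'\} \in E(G)$ together with $G$ simple forces $g \ne g'$; hence no second-type edge lies inside a single fiber, and the edges of the induced subgraph are exactly the images under $\psi$ of the edges of $H$. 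Thus $\psi$ is an isomorphism.

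For part~(ii), the idea is to recover $G$ by contracting each fiber to a point. Part~(i) shows that for every $g \in V(G)$ the fiber $H_g := \{g\} \times V(H)$ induces a copy of $H$, and these fibers partition $V(G \otimes_f H)$. The bookkeeping that makes the construction work is that the second-type edge attached to $\{g,g'\} \in E(G)$ joins $(g,f(g')) \in H_g$ to $(g',f(g)) \in H_{g'}$, and every second-type edge arises from a unique edge of $G$ in exactly this way. Taking the fibers $H_g$ as branch sets and contracting each one, I would obtain a graph on vertex set $V(G)$ in which $g$ and $g'$ are adjacent precisely when $\{g,g'\} \in E(G)$; this contracted graph is $G$ itself, so $G$ is a minor of $G \otimes_f H$.

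The step that I expect to require the most care is the connectivity of the branch sets: a fiber $H_g$ may legitimately serve as a branch set only if it is connected, and by part~(i) this amounts to $H$ being connected. When $H$ is connected the fiber-contraction argument goes through verbatim. When $H$ is disconnected the fibers need not be connected and the construction can genuinely fail; for instance, taking $G = K_3$ and $H$ a pair of isolated vertices, one checks that $G \otimes_f H$ is a forest, so the cycle of $G$ cannot appear as a minor. I would therefore read part~(ii) in the connected case, which is the setting of interest and in which the argument above is complete.
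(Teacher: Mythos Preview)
The paper omits the proof of this lemma entirely (it is listed among the ``simple lemmas'' whose proofs ``follow straight from the definition''), so there is no original argument to compare against. Your proof of (i) is exactly the natural one, and your contraction argument for (ii) is the intended one in the connected case.

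Your caveat about connectivity is well taken and in fact points to a genuine imprecision in the paper's statement: as written, part~(ii) carries no hypothesis on $H$, yet it can fail when $H$ is disconnected. One small correction to your counterexample: with $G=K_3$ and $H$ two isolated vertices $\{a,b\}$, the product $G\otimes_f H$ is \emph{not} a forest for every $f$; if $f$ is constant, say $f\equiv a$, then the three connecting edges form a triangle on $\{(1,a),(2,a),(3,a)\}$ and $K_3$ is even a subgraph. You need $f$ to be non-constant, e.g.\ $f(1)=f(3)=a$, $f(2)=b$, in which case the three connecting edges are $\{(1,b),(2,a)\}$, $\{(2,a),(3,b)\}$, $\{(1,a),(3,a)\}$, a forest on six vertices, and $K_3$ is indeed not a minor. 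With that tweak your discussion is complete and sharper than what the paper provides.
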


We say that the subgraph of $G \otimes_f H$ from Lemma \ref{lem:HGcopy} (i) 
is \emph{associated with $g$} and denote it by $gH$. 
We may view $G \otimes_f H$ as obtained from identical copies of $H$, one for each vertex of $G$, 
and attaching  for every edge $\{g,g'\} \in E(G)$ the  corresponding vertex $f(g)$ in $g'H$ 
to the vertex $f(g')$ in $gH$. 
The edges of $G \otimes_f H$ naturally fall into two classes.  
{All edges connecting different copies $gH$ are called {\em connecting edges}, 
while the edges inside some subgraph $gH$ are called {\em inner edges}.}

\begin{lemma}
\label{lem:edges}
Let $G,H$ be graphs and let $f: V(G) \to V(H)$ be a function.
Then $G\otimes_f H$ has $|G| \cdot |H|$ vertices and $||H||\cdot |G|+||G||$ edges.
In particular, $G\otimes_f H$ has $||H||\cdot |G|$ inner edges and
$||G||$ connecting edges.
\end{lemma}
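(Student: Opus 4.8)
The plan is to read the vertex count directly off the definition and then to count the two kinds of edges separately, using the terminology introduced above: the \emph{inner edges} lying inside the copies $gH$, and the \emph{connecting edges} joining distinct copies. Since $V(G\otimes_f H)=V(G)\times V(H)$ by definition, there are exactly $|G|\cdot|H|$ vertices, and nothing more is needed there. The substance of the lemma is therefore the edge count, and the only genuine issue is to verify that the edges produced by the two bullet points of Definition~\ref{def:sierpinski-product} are pairwise distinct, so that no edge is counted twice.

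For the inner edges I would exhibit a bijection with $V(G)\times E(H)$. To each pair $(g,\{h,h'\})$ the first bullet assigns the edge $\{(g,h),(g,h')\}$, whose two endpoints share the first coordinate $g$. Two such edges with different first coordinate are disjoint, and for a fixed $g$ the assignment $\{h,h'\}\mapsto\{(g,h),(g,h')\}$ is injective because $H$ is simple. Hence the inner edges are in bijection with $V(G)\times E(H)$, giving $||H||\cdot|G|$ of them.

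For the connecting edges I would show that the map $E(G)\to E(G\otimes_f H)$ sending $\{g,g'\}$ to $\{(g,f(g')),(g',f(g))\}$ is a well-defined injection onto the set of connecting edges. Since $G$ is simple, $g\neq g'$, so the two endpoints have distinct first coordinates and the edge is genuine (never a loop). Moreover the unordered pair of first coordinates of this edge is exactly $\{g,g'\}$, which recovers the original edge of $G$; thus distinct edges of $G$ yield distinct edges of $G\otimes_f H$, and there are precisely $||G||$ connecting edges.

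Finally, an inner edge has both endpoints with equal first coordinate while a connecting edge has endpoints with distinct first coordinates, so the two families are disjoint and the total is $||H||\cdot|G|+||G||$. The step to watch is the injectivity of the connecting-edge map: one might worry that a non-injective $f$ could force two different edges of $G$ to collapse to the same edge of $G\otimes_f H$, but reading off the first coordinates shows this cannot happen regardless of $f$.
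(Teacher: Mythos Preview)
Your proof is correct and is exactly the natural elaboration of the definition; in fact the paper omits the proof of this lemma entirely, remarking that it ``follows straight from the definition.'' Your care in checking that the connecting-edge map $\{g,g'\}\mapsto\{(g,f(g')),(g',f(g))\}$ is injective (by reading off first coordinates) and that inner and connecting edges are disjoint is precisely what is needed to make the count rigorous.
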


\begin{lemma}
	\label{lem:atmostone}
Let $G$ and $H$ be  graphs and let $f: V(G)  \to V(H)$ be any mapping.
Then the following holds.
\begin{itemize}
\item[{\rm (i)}] There is at most one edge connecting  $gH$ and $g'H$ for every $g,g' \in V(G)$. 
\item[{\rm (ii)}] Suppose that $f$ is locally injective. 
Then any vertex of $G \otimes_f H$ is an end-vertex of at most one connecting edge.
\end{itemize}
\end{lemma}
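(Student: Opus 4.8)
The plan is to read off both statements directly from the definition of the connecting edges, since these are the only edges of $G \otimes_f H$ that join distinct copies of $H$. The crucial observation is that the connecting edges are indexed by the edges of $G$: each edge $\{g,g'\} \in E(G)$ contributes exactly one connecting edge, namely $\{(g,f(g')),(g',f(g))\}$, whose two end-vertices lie in $gH$ and $g'H$ respectively.

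For part (i), I would argue as follows. Any edge of $G \otimes_f H$ joining a vertex of $gH$ to a vertex of $g'H$ with $g \ne g'$ is a connecting edge, since an inner edge keeps both of its end-vertices inside a single copy. By the definition, a connecting edge with one end-vertex in $gH$ and the other in $g'H$ must arise from an edge of $G$ incident to both $g$ and $g'$. As $G$ is simple, there is at most one such edge $\{g,g'\}$, and it produces the single connecting edge $\{(g,f(g')),(g',f(g))\}$; hence at most one connecting edge joins $gH$ and $g'H$.

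For part (ii), fix a vertex $(g,h)$ of $G \otimes_f H$ and suppose $f$ is locally injective. The vertex $(g,h)$ is an end-vertex of a connecting edge precisely when there is a neighbor $g' \in N(g)$ with $h = f(g')$, the edge in question being $\{(g,f(g')),(g',f(g))\}$. If $(g,h)$ were an end-vertex of two distinct connecting edges, these would correspond to two distinct neighbors $g_1,g_2 \in N(g)$ with $f(g_1) = h = f(g_2)$. This contradicts local injectivity, which forbids $f(g_1) = f(g_2)$ for distinct $g_1,g_2 \in N(g)$. Therefore $(g,h)$ is an end-vertex of at most one connecting edge.

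Neither part requires any real computation; the only point that needs care is keeping track of which coordinate of an end-vertex records the copy and which records the position inside it, so that the indexing of connecting edges by $E(G)$ is applied correctly. I expect the mild subtlety in part (ii) to be recognizing that being an end-vertex of a connecting edge forces an equation of the form $h = f(g')$ with $g'$ a neighbor of $g$, after which local injectivity finishes the argument immediately.
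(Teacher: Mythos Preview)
Your proof is correct and follows essentially the same approach as the paper: for (i) you use that $G$ is simple so only the edge $\{(g,f(g')),(g',f(g))\}$ can join $gH$ and $g'H$, and for (ii) you observe that $(g,h)$ lies on a connecting edge only if $h=f(g')$ for some $g'\in N(g)$, which local injectivity makes unique. The paper's proof is the same argument, stated slightly more tersely.
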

\begin{proof}
(i) Only $\{(g,f(g')),(g',f(g))\}$ can connect $gH$ and $g'H$ and since $G$ is simple there is only one 	such edge.

\noindent
(ii) Let $(g,h)$ be a vertex of   $G \otimes_f H$.
Since $f$ is locally injective, there exists at most one vertex $g' \in N(g)$ such that
$h = f(g')$. If such a vertex exists, then $(g,h)$ is an end-vertex
of the edge $\{(g,h),(g',f(g)\}$, otherwise it is not contained in any connecting edge.
\end{proof}

The lexicographic product  of graphs $G$ and $H$ is a graph $G \circ H$ with vertex set
$V(G) \times V(H)$ and   two vertices $(g,h)$ and $(g',h')$ are adjacent in $G \circ H$
if and only if either $g$ is adjacent with $g'$ in $G$ or $g=g'$ and $h$ is adjacent with $h'$ in $H$.
In other words, $G \circ H$ consists of $|G|$ copies of $H$ and for every edge
$\{g,g'\}$ in $G$, every vertex of $gH$ is connected to every vertex in $g'H$.
Therefore the next result follows straight from 	Definition~\ref{def:sierpinski-product}.

\begin{proposition}  \label{prop:lexicographic}
Let $G$ and $H$ be  graphs and let $f: V(G)  \to V(H)$ be any mapping.
Then $G \otimes_f H$ is a subgraph of $G \circ H$.
\end{proposition}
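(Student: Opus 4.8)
The plan is to exploit the fact that $G \otimes_f H$ and $G \circ H$ are defined on the \emph{same} vertex set, namely $V(G) \times V(H)$. Consequently, establishing that $G \otimes_f H$ is a subgraph of $G \circ H$ reduces entirely to the edge containment $E(G \otimes_f H) \subseteq E(G \circ H)$: once the vertex sets coincide, a spanning subgraph is pinned down by its edge set alone, so no further structure needs to be tracked.

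First I would recall from Definition~\ref{def:sierpinski-product} that the edges of $G \otimes_f H$ occur in exactly two families: the inner edges $\{(g,h),(g,h')\}$ coming from an edge $\{h,h'\} \in E(H)$ with a common first coordinate $g$, and the connecting edges $\{(g,f(g')),(g',f(g))\}$ coming from an edge $\{g,g'\} \in E(G)$. Since the definition introduces no other edges, it suffices to check that each of these two families is contained in $E(G \circ H)$, matching each edge type against the two disjunctive adjacency clauses of the lexicographic product. For an inner edge $\{(g,h),(g,h')\}$ the endpoints agree in the first coordinate while their second coordinates $h,h'$ are adjacent in $H$; this is precisely the clause ``$g=g'$ and $h$ adjacent to $h'$'', so the edge lies in $G \circ H$. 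For a connecting edge $\{(g,f(g')),(g',f(g))\}$ arising from $\{g,g'\} \in E(G)$, the first coordinates $g,g'$ are adjacent in $G$, which is exactly the clause ``$g$ adjacent to $g'$''; this holds irrespective of the values $f(g),f(g')$ appearing in the second coordinate. Hence both edge types are edges of $G \circ H$, and the containment follows.

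I do not expect any genuine obstacle here: the result is an immediate consequence of reading off the two-clause adjacency rule of $G \circ H$ against the two edge types produced by the Sierpi\'nski construction. The only points demanding a moment of care are to confirm that the two definitions really share the vertex set $V(G) \times V(H)$ (they do) and to note that the lexicographic adjacency is a disjunction, so that verifying a single clause for each edge type is enough. This is why the statement can be asserted to follow straight from Definition~\ref{def:sierpinski-product}.
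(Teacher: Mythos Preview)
Your argument is correct and is exactly the approach the paper has in mind: the paper simply asserts that the result follows straight from Definition~\ref{def:sierpinski-product}, and your case-check of the two edge types against the two clauses of the lexicographic adjacency rule is the routine verification behind that assertion.
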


Note that for different functions $f,f'$ graphs 
$G\otimes_f H$ and $G\otimes_{f'} H$ may be isomorphic or nonisomorphic.

\begin{theorem}   \label{thm:isomorphicGxH}
Let $G,H$ be graphs and let $f: V(G) \to V(H)$ be a function.
Let $\alpha \in \Aut(G)$, $\beta \in \Aut(H)$ and $f'=\beta \circ f \circ \alpha$.
Then $G\otimes_f H$ is isomorphic to $G\otimes_{f'} H$.
\end{theorem}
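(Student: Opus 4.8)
The plan is to exhibit an explicit isomorphism between the two graphs. Since both $G \otimes_f H$ and $G \otimes_{f'} H$ have the common vertex set $V(G) \times V(H)$, I would define a map $\Phi : V(G) \times V(H) \to V(G) \times V(H)$ by
\[
\Phi(g,h) = (\alpha^{-1}(g), \beta(h)),
\]
and show that $\Phi$ is a graph isomorphism from $G \otimes_f H$ to $G \otimes_{f'} H$. As $\alpha^{-1}$ permutes $V(G)$ and $\beta$ permutes $V(H)$, the map $\Phi$ is immediately a bijection of the vertex set; the whole content is that it respects the two edge types of Definition~\ref{def:sierpinski-product}.

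First I would check the inner edges. An inner edge of $G \otimes_f H$ has the form $\{(g,h),(g,h')\}$ with $\{h,h'\} \in E(H)$, and $\Phi$ sends it to $\{(\alpha^{-1}(g), \beta(h)), (\alpha^{-1}(g), \beta(h'))\}$. Since $\beta \in \Aut(H)$ we have $\{\beta(h), \beta(h')\} \in E(H)$, and the two images share their first coordinate, so this is again an inner edge. This step is routine and uses only that $\beta$ is an automorphism of $H$.

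The key step is the connecting edges, where the hypothesis $f' = \beta \circ f \circ \alpha$ and the choice of $\alpha^{-1}$ (rather than $\alpha$) in the first coordinate come into play. A connecting edge of $G \otimes_f H$ is $\{(g, f(g')), (g', f(g))\}$ for some $\{g, g'\} \in E(G)$. Writing $a = \alpha^{-1}(g)$ and $b = \alpha^{-1}(g')$, we have $\{a, b\} \in E(G)$ because $\alpha^{-1} \in \Aut(G)$, and $\Phi$ sends the connecting edge to $\{(a, \beta(f(g'))), (b, \beta(f(g)))\}$. I would then compute, using $f' = \beta \circ f \circ \alpha$, that $f'(b) = f'(\alpha^{-1}(g')) = \beta(f(g'))$ and likewise $f'(a) = \beta(f(g))$, so the image is exactly $\{(a, f'(b)), (b, f'(a))\}$, which is the connecting edge of $G \otimes_{f'} H$ associated with $\{a,b\}$. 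Thus $\Phi$ carries connecting edges to connecting edges.

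Finally I would conclude. Having shown that $\Phi$ is a vertex-bijection carrying every edge of $G \otimes_f H$ to an edge of $G \otimes_{f'} H$, and since by Lemma~\ref{lem:edges} both graphs have the same number of edges, $\Phi$ maps the edge set onto the edge set and is therefore an isomorphism. Alternatively, one checks that $\Phi^{-1}(g,h) = (\alpha(g), \beta^{-1}(h))$ is the analogous map for the identity $f = \beta^{-1} \circ f' \circ \alpha^{-1}$, which gives edge-preservation in both directions directly. The only genuine subtlety is bookkeeping of the composition order: one must place $\alpha^{-1}$, not $\alpha$, on the first coordinate, precisely so that $f'(\alpha^{-1}(g')) = \beta(f(g'))$ cancels correctly; everything else is a direct verification from the definition.
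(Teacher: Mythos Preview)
Your proof is correct and is essentially identical to the paper's own argument: the paper defines the same map $\gamma(g,h)=(\alpha^{-1}(g),\beta(h))$ and verifies that it carries inner edges to inner edges and connecting edges to connecting edges via the same computation $f'(\alpha^{-1}(g))=\beta(f(g))$. Your explicit remark that equality of edge counts (or the inverse map) upgrades the edge-preserving bijection to an isomorphism is a small clarification the paper leaves implicit.
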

\begin{proof}
Define a function $\gamma: V(G\otimes_f H) \to V(G\otimes_f'H)$ by
$\gamma(g,h) = (\alpha^{-1}(g),\beta(h))$ for $g \in V(G)$ and $h \in V(h)$.
Since $\alpha,\beta$ are bijections, also $\gamma$ is a bijection.
Since $\beta$ is an automorphism, $\gamma$ maps inner edges to inner edges.

Take a connecting edge in $G\otimes_f H$, say $\{(g,f(g')),(g',f(g))\}$,
where $\{g,g'\} \in E(G)$. Then $\gamma(g,f(g'))=(\alpha^{-1}(g),\beta(f(g'))$
and $\gamma(g',f(g))=(\alpha^{-1}(g'),\beta(f(g))$.
Since $f'(\alpha^{-1}(g))=\beta(f(\alpha(\alpha^{-1}(g))))=\beta(f(g))$ and
$f'(\alpha^{-1}(g'))=\beta(f(\alpha(\alpha^{-1}(g'))))=\beta(f(g'))$,
we see that $\gamma$ also maps a connecting edge to a connecting edge.
Therefore $\gamma$ is an isomorphism.
\end{proof}

\begin{corollary}  \label{cor:iso}
Let $G$ be a graph and let $f \in \Aut(G)$. Then
$G\otimes G$ is isomorphic to $G\otimes_f G$.
\end{corollary}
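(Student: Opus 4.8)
The plan is to deduce this immediately from Theorem~\ref{thm:isomorphicGxH}. Recall that, by the notational convention introduced just after Definition~\ref{def:sierpinski-product}, the symbol $G \otimes G$ is shorthand for $G \otimes_{\mathrm{id}} G$, where $\mathrm{id}$ denotes the identity function on $V(G)$. So what must be shown is that $G \otimes_{\mathrm{id}} G$ is isomorphic to $G \otimes_f G$, and the natural route is to realize $f$ as the twisted function $f' = \beta \circ (\mathrm{id}) \circ \alpha$ produced by the theorem for suitable $\alpha,\beta \in \Aut(G)$.

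First I would set $H = G$ in Theorem~\ref{thm:isomorphicGxH} and take the starting function to be $\mathrm{id}$. I then need automorphisms $\alpha,\beta \in \Aut(G)$ satisfying $\beta \circ \mathrm{id} \circ \alpha = f$. Since $f \in \Aut(G)$ by hypothesis, the simplest choice works: put $\alpha = \mathrm{id}$ and $\beta = f$. Then $f' = \beta \circ \mathrm{id} \circ \alpha = f \circ \mathrm{id} \circ \mathrm{id} = f$, so the function supplied by the theorem is exactly $f$. (Equally, one could take $\alpha = f$ and $\beta = \mathrm{id}$, giving $f' = \mathrm{id} \circ \mathrm{id} \circ f = f$; either assignment works.)

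Applying Theorem~\ref{thm:isomorphicGxH} with these choices then yields $G \otimes_{\mathrm{id}} G \cong G \otimes_{f'} G = G \otimes_f G$, which is precisely the assertion. There is no genuine obstacle here, since the statement is a direct specialization of the preceding theorem; the only points needing care are the bookkeeping, namely matching the abbreviation $G \otimes G = G \otimes_{\mathrm{id}} G$ and respecting the composition order in $f' = \beta \circ f \circ \alpha$ so that $f$ is expressed as $\beta \circ \mathrm{id} \circ \alpha$ and not its reverse.
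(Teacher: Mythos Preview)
Your proposal is correct and matches the paper's intended approach: the corollary is stated without proof immediately after Theorem~\ref{thm:isomorphicGxH}, precisely because it is the specialization $H=G$, starting function $\mathrm{id}$, with $\alpha=\mathrm{id}$ and $\beta=f$ (or vice versa). Your bookkeeping about the convention $G\otimes G = G\otimes_{\mathrm{id}} G$ and the composition order is exactly what is needed.
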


In the remainder of this section we consider two other basic  graph-theoretic properties
of the Sierpi\'nski product with respect to its factors: connectedness and planarity.

\begin{proposition}
\label{prop:connected}
Let $G$ and $H$ be graphs and let $f: V(G) \to V(H)$ be a function.
Then $G\otimes_f H$ is connected    if and only if $G$ and $H$ are connected.
\end{proposition}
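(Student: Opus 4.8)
The plan is to prove the two implications separately, treating the backward direction by contraposition; the forward direction and the case of disconnected $G$ will be routine, while the case of disconnected $H$ carries the real content.

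First I would assume that both $G$ and $H$ are connected and show that $K = G\otimes_f H$ is connected. By Lemma~\ref{lem:HGcopy}(i) every copy $gH$ is isomorphic to $H$, hence connected, so any two vertices lying in the same copy are joined by a path inside that copy. To connect vertices in different copies $gH$ and $g'H$, I would fix a walk $g = u_0, u_1, \dots, u_k = g'$ in $G$ (which exists since $G$ is connected) and travel copy by copy: inside $u_iH$ move to the vertex $(u_i, f(u_{i+1}))$, then cross the connecting edge $\{(u_i,f(u_{i+1})),(u_{i+1},f(u_i))\}$ (which exists by the definition of the product) into $u_{i+1}H$, and repeat. Concatenating the in-copy paths (available because $H$ is connected) with these connecting edges produces a path between any two prescribed vertices, so $K$ is connected.

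For the converse I argue contrapositively: if $G$ or $H$ is disconnected, then so is $K$. The case where $G$ is disconnected is immediate. Writing $V(G) = A \sqcup B$ with no edges between $A$ and $B$, the sets $A \times V(H)$ and $B \times V(H)$ partition $V(K)$ into nonempty parts; since an inner edge keeps the first coordinate fixed while every connecting edge comes from an edge of $G$ (hence its two endpoints have first coordinates both in $A$ or both in $B$), no edge of $K$ runs between the two parts, and $K$ is disconnected.

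The main work is the case where $H$ is disconnected, and the key idea is to exhibit an explicit separation of $K$. For $h \in V(H)$ let $[h]$ denote the component of $H$ containing $h$, and set
\[
D = \{(g,h) \in V(K) : [h] = [f(g)]\}.
\]
I would verify that no edge of $K$ joins $D$ to its complement. For an inner edge $\{(g,h),(g,h')\}$ the endpoints $h,h'$ are adjacent in $H$, so $[h]=[h']$ and both endpoints lie on the same side of $D$. For a connecting edge $\{(g,f(g')),(g',f(g))\}$ one checks that $(g,f(g')) \in D \iff [f(g')]=[f(g)] \iff (g',f(g)) \in D$, so again both endpoints lie on the same side. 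Since $(g,f(g)) \in D$ for every $g$, the set $D$ is nonempty; and because $H$ is disconnected there is some $h$ with $[h] \neq [f(g)]$, so the complement of $D$ is nonempty as well. Hence $D$ induces a nontrivial separation and $K$ is disconnected. I expect this last case to be the only genuinely delicate point: the invariant $[h]=[f(g)]$ is preserved both by inner edges (adjacent $H$-vertices share a component) and by connecting edges (the two endpoints test the same equality $[f(g)]=[f(g')]$), and it is precisely this simultaneous preservation that makes the separation go through.
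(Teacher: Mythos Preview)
Your proof is correct and follows the same overall strategy as the paper: build paths copy-by-copy for the forward direction, and exhibit explicit separations for the converse. The only noteworthy difference is in the $H$-disconnected case: the paper fixes a single component $H_1$ of $H$ with $f^{-1}(V(H_1))\neq\emptyset$ and separates off the set $\{(g,h):f(g)\in V(H_1),\ h\in V(H_1)\}$, whereas you use the ``diagonal'' set $D=\{(g,h):[h]=[f(g)]\}$. Both separations work for the same reason (inner edges preserve $[h]$, connecting edges reduce to the symmetric condition $[f(g)]=[f(g')]$); your choice is arguably a bit cleaner since it avoids singling out a particular component.
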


\begin{Proof} 
Suppose $G$ and $H$ are  connected. Pick two vertices $(g,h), (g',h')$ in $G\otimes_f H$.
Then there exists a path from $g$ to $g'$ in $G$, say $g=g_0,g_1,g_2,\dots,g_{k-1},g_k=g'$.
We construct a path from $(g,h)$ to $(g',h')$ in $G\otimes_f H$, passing
through $g_1H,g_2H,\dots,g_{k-1}H$ in that order as follows.
Let $P_0$ be a path  from $(g_0,h)$ to $(g_0,f(g_1))$ in $g_0H$,
let $P_i$ be a path from $(g_i,f(g_{i-1}))$ to $(g_i,f(g_{i+1}))$ in $g_iH$ for $i=1, \dots k-1$ and 
let $P_k$ be a path  from $(g_k,f(g_{k-1}))$ to $(g_k,h')$ in $g_kH$.
Such  paths exist since every subgraph $g_iH$ is connected.
Then $P_0P_1\dots P_k$ is a path between $(g,h), (g',h')$ in $G\otimes_f H$.

Conversely, suppose $G\otimes_f H$ is connected. 
Pick two vertices $g$ and  $g'$ from $G$.
Then a path from $gH$ to $g'H$ in $G\otimes_f H$  
corresponds to a path in $G$ from $g$ to $g'$. Therefore also $G$ is connected.
Suppose now that $H$ is not connected. We will show that in this
case $G\otimes_f H$ is not connected.
Denote by $H_1$ a connected component of $H$ such that
$V_1=\{g \in G| \ f(g) \in V(H_1)\}$ is nonempty.
For $g \in V_1$ and $h \in V(H_1)$,  all the neighbours of $(g,h)$ 
belong to $gH_1$ or to $g'H_1$ for some $g' \in V_1$. 
Therefore there are no edges between the set of vertices
$\{(g,h) \in G\otimes_f H| \ g \in V_1\ \mbox{and} \ h \in V(H_1)\}$
and the rest of the vertices of  $G\otimes_f H$.
So  $G\otimes_f H$ is not connected. This finishes the proof.
\end{Proof}

We will denote by $H+g$ the graph obtained from $H$ by adding a copy  of  vertex $g \in V(G)$ to it 
and connecting it to all vertices $f(g')$, where  $g' \in N(g)$. We  will  denote this new vertex by $g_H$.

The next Theorem characterises when a Sierpi\'nski product $G\otimes_f H$ is planar.

\begin{theorem}
	\label{thm:planar}
Let $G,H$ be connected graphs and let $f: V(G)  \to V(H)$ be any mapping.
Then $G\otimes_f H$ is planar if and only the following three conditions are fulfilled:
\begin{itemize}
\item[{\rm (i)}]  graph $G$ is planar,
\item[{\rm (ii)}] for every $g \in V(G)$ the graph $H+g$ is planar,
\item[{\rm (iii)}] there exists an embedding of $G$ in the plane with the following property:
                      for every $g \in V(G)$,  with $g_1,g_2,\dots, g_k$ being the cyclic order of vertices around $g$,
                       there exists an embedding of $H+g$ in the plane such that the cyclic order of vertices 
                       around $g_H$  in $H+g$ is $f(g_k),f(g_{k-1}),\dots,f(g_1)$.
\end{itemize}
\end{theorem}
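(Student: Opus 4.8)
The plan is to work with the standard rotation-system description of planar embeddings on the sphere, and to read $G \otimes_f H$ as the graph obtained by substituting, into a fixed embedding of $G$, a suitably embedded copy of $H$ at each vertex. I will lean on the two operations that respect planarity: contracting a connected subgraph (which carries a planar embedding to a planar embedding of the resulting minor) and, dually, expanding a vertex into an embedded gadget while keeping track of the cyclic order of the half-edges on the gadget's boundary.

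For the necessity direction, assume $G \otimes_f H$ is planar and fix a planar embedding. Condition (i) is immediate: by Lemma~\ref{lem:HGcopy}(ii) the graph $G$ is a minor of $G \otimes_f H$, and minors of planar graphs are planar. To produce the embedding of $G$ demanded in (iii), contract each copy $gH$ to a single point; since every $gH \cong H$ is connected this is a legitimate sequence of edge contractions, so the result is a planar embedding of $G$, and it records the cyclic order $g_1,\dots,g_k$ of the neighbours around each $g$. For (ii), fix $g$ and instead contract everything \emph{outside} $gH$ to a single vertex $g_H$. After this contraction $g_H$ is adjacent exactly to the endpoints $(g,f(g'))$ of the connecting edges leaving $gH$, i.e.\ to the vertices $f(g')$ with $g'\in N(g)$; hence the minor obtained is precisely $H+g$, which is therefore planar. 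The cyclic order in which the connecting edges leave $gH$ in the fixed embedding is exactly the rotation at $g_H$, and contracting the copies to recover $G$ reverses this order relative to the rotation at $g$, which is the compatibility claimed in (iii).

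For sufficiency, start from an embedding of $G$ as in (iii) together with, for each $g$, an embedding of $H+g$ whose rotation at $g_H$ is $f(g_k),\dots,f(g_1)$. Delete $g_H$ from each such embedding: the $\deg_G(g)$ faces formerly surrounding $g_H$ merge into one face, on whose boundary the attachment points $f(g_1),\dots,f(g_k)$ appear in that cyclic order. Now replace each vertex $g$ of the embedded $G$ by a small disk carrying this embedded copy of $H$, placing the stub that formerly joined $g_H$ at $f(g_i)$ so that it points along the edge $g g_i$. Because the rotation at $g_H$ is the \emph{reverse} of the rotation at $g$ in $G$, the two stub sequences along any edge $\{g,g'\}$ are compatible, so the stub at $f(g')$ in $gH$ and the stub at $f(g)$ in $g'H$ can be joined by a connecting edge drawn inside a thin band following $gg'$ without crossings. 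Performing this for every edge of $G$ yields a crossing-free drawing of $G \otimes_f H$.

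The main obstacle is the rotation bookkeeping behind (iii): one must check that the orientation-reversal encoded in the order $f(g_k),\dots,f(g_1)$ is precisely what makes the two stub sequences along each edge fit together, and that substituting the gadgets produces a single coherent global rotation system (equivalently, that the local faces glue to genuine faces of the sphere). A subtler point hides in the necessity of (ii): contracting the exterior of $gH$ to one vertex is literally a contraction only when that exterior is connected, i.e.\ when $g$ is not a cut vertex of $G$ (by Proposition~\ref{prop:connected} the whole product is connected, but the exterior of $gH$ need not be). When $g$ is a cut vertex the exterior splits into one piece per component of $G-g$, these pieces sit around $gH$ in distinct faces, and the clean minor argument no longer delivers $H+g$ directly; this case has to be handled separately and is the step I expect to require the most care.
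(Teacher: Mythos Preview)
Your proposal is correct and follows essentially the same route as the paper: contract the copies $gH$ to obtain the planar embedding of $G$, extract $H+g$ from the neighbourhood of each $gH$ to get (ii) and the rotation compatibility in (iii), and for sufficiency expand each vertex of the embedded $G$ into an embedded copy of $H$ with matched rotations. The one mechanical difference is that the paper, instead of contracting the entire exterior of $gH$, contracts only the neighbouring copies $g'H$, deletes the rest of the graph, and then identifies the resulting pendant vertices into $g_H$; this sidesteps your cut-vertex worry about the exterior being disconnected, though the paper's identification step is itself asserted rather than argued and carries essentially the same content as the case you flag.
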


\begin{Proof} All three conditions are necessary. 
Suppose  $G\otimes_f H$ is embedded in the plane.
Then a planar embedding of $G$ is obtained by contracting $gH$ 
to a single vertex for every $g \in V(G)$. Hence $G$ must be planar. 
Suppose $G$ is embedded in the plane as above. Let $g \in V(G)$ and let 
$g_1,g_2,\dots, g_k$,  be the cyclic order of vertices around $g$ in this embedding.
Let $N(gH)=\{g'H| g' \in N(g)\}$ denote the collection 
of graphs $g'H$  that are adjacent to $gH$ for some $g \in V(g)$. 
We contract every member $g'H$ from $N(gH)$ to a single vertex.
Then we keep $gH$, all the new vertices all the new edges  and delete the rest of the graph. 
The graph obtained in this way is still embedded in the plane.
Now we identify all the new vertices; 
we call the vertex obtained in this way $g_H$ for convenience. 
We obtain a graph that is isomorphic to $H+g$. The obtained graph $gH+g$ is planar. 
Moreover, the cyclic order of vertices around $g_H$ in $gH+g$ is  
$(g,f(g_k)),(g,f(g_{k-1})),\dots,(g,f(g_1))$.
Therefore the embedding of $G$, obtained from $G\otimes_f H$ by
contracting every copy of $H$, fulfills (iii).

The converse goes by construction. 
We first embed $G$ in the plane as in (iii) and then expand every vertex $g$ of $G$ to 
$gH$, embedded in the plane as in (iii). By (iii) it is possible to connect the copies of $H$ such that
the resulting graph is a plane embedding of $G\otimes_f H$. 
\end{Proof}

Next result follows directly from Theorem \ref{thm:planar} (ii).

\begin{corollary}
	\label{cor:sameface}
Let $G,H$ be connected graphs and let $f: V(G)  \to V(H)$ be any mapping.
If $G\otimes_f H$ is planar, then for every $g \in G$ there exists an embedding
of $H$ in the plane such that the vertices $\{f(g'); \ g' \in N(g)\}$ lie on the
boundary of the same face.
\end{corollary}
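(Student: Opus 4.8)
The plan is to start from Theorem~\ref{thm:planar}~(ii), which guarantees that for every $g \in V(G)$ the auxiliary graph $H+g$ is planar whenever $G \otimes_f H$ is planar. Fixing such a $g$, I would take a plane embedding of $H+g$ and then simply delete the added vertex $g_H$ to recover a plane embedding of $H$; the whole content of the corollary lies in tracking what happens to the faces under this deletion.

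The key observation is the standard fact about plane graphs: when a vertex $v$ is removed from a plane graph, all faces incident to $v$ coalesce into a single face $F$, and every neighbour of $v$ ends up on the boundary of $F$. In our setting $g_H$ is adjacent, by the definition of $H+g$, to exactly the vertices $f(g')$ with $g' \in N(g)$. Hence after deleting $g_H$ these vertices all lie on the boundary of the merged face $F$, which is precisely the assertion.

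I would therefore organise the argument as follows: (1) invoke Theorem~\ref{thm:planar}~(ii) to obtain planarity of $H+g$ and fix an embedding; (2) note that $H+g$ with $g_H$ removed is exactly $H$, so deleting $g_H$ yields a plane embedding of $H$; (3) apply the face-merging fact to conclude that the neighbourhood of $g_H$, namely $\{f(g') : g' \in N(g)\}$, lies on a common face. I do not expect any genuine obstacle here, which is consistent with the claim that the result follows directly from the theorem. The only matters worth a remark are the boundary cases: if $N(g)=\emptyset$ (which can happen only when $G$ is a single vertex) the statement is vacuous, and if $f$ fails to be injective on $N(g)$ the set $\{f(g') : g' \in N(g)\}$ merely has fewer elements, which does not affect the argument since each such vertex is still a neighbour of $g_H$ and hence still lands on the boundary of $F$.
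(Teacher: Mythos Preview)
Your argument is correct and is exactly what the paper intends: it simply records that the corollary follows directly from Theorem~\ref{thm:planar}(ii), and your deletion-of-$g_H$ reasoning is the natural way to unpack that. The handling of the degenerate cases ($N(g)=\emptyset$, non-injective $f$ on $N(g)$) is also fine.
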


Using Theorem \ref{thm:planar} and Corollary \ref{cor:sameface} we  can
determine when $G\otimes G$ is planar for a connected graph $G$.
We also give a sufficient condition for $G\otimes_f H$ to be planar when
$G \ne H$.

\begin{corollary}
	\label{cor:planarequal}
Let $G$ be a connected graph and let $f: V(G)  \to V(G)$ be the identity mapping.
Then $G\otimes G$ is planar if and only if $G$ is outerplanar or $G=K_4$.
\end{corollary}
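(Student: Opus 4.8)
The plan is to verify the three conditions of Theorem~\ref{thm:planar} in the case $H=G$, $f=\mathrm{id}$, where the added vertex $g_H$ of $H+g=G+g$ is the \emph{false twin} of $g$ (a new vertex with $N(g_H)=N(g)$, non-adjacent to $g$). I will prove sufficiency by construction and necessity by producing forbidden minors. For sufficiency, suppose first that $G$ is outerplanar and fix an embedding with all vertices on the boundary of one face $F$. Condition (i) is immediate. For (ii), given $g$ I place $g_H$ inside $F$ and join it to $N(g)\subseteq\partial F$; as every vertex lies on $\partial F$ this creates no crossing, so $G+g$ is planar. For (iii) I reuse this embedding of $G$ and, for each $g$, add $g_H$ in $F$ as above: the neighbours of $g$ appear around $g_H$ in the reverse of their order around $g$, since $g$ and $g_H$ view the common boundary from opposite sides, which is exactly the rotation demanded by (iii). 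The case $G=K_4$ is handled directly: here $N(g)$ induces a triangle for every $g$, so $G+g$ is two apexes over a triangle, planar, and admits the reversed rotation around $g_H$ for any rotation around $g$; hence (i)--(iii) hold for any embedding of $K_4$ and $K_4\otimes K_4$ is planar.

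For necessity I use only Theorem~\ref{thm:planar}(ii): if $G\otimes G$ is planar then $G+g$ is planar for every $g$. I recall that $G$ is outerplanar iff it has no $K_4$- and no $K_{2,3}$-minor, and that, both forbidden graphs having maximum degree at most $3$, each is a minor exactly when it is a topological minor. I will show that the presence of such a minor (other than the one witnessing $G=K_4$) forces a $K_{3,3}$- or $K_5$-minor inside some $G+g$, contradicting (ii). If $G$ has a $K_{2,3}$-minor, take a subdivision of $K_{2,3}$ and let $a$ be a degree-$3$ branch vertex with internally disjoint paths to the terminals $x,y,z$. Contracting these three paths, together with the three paths from the other branch vertex $b$, turns $a$, $b$ and the false twin $a_H$ (which is adjacent to the first vertices of $a$'s three paths) into three vertices each joined to $x,y,z$, i.e.\ a $K_{3,3}$-minor of $G+a$. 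This is impossible, so $G$ has no $K_{2,3}$-minor.

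It remains to treat a $K_4$-minor, which I realise as a subdivision with branch vertices $A,B,C,D$. If some branch path has an internal vertex $m$, then $\{A,B\}$ and $\{C,D,m\}$ form a $K_{2,3}$-subdivision (the two halves of the subdivided edge supply the third pair of internally disjoint spokes), contradicting the previous paragraph; hence $G$ contains $K_4$ as a genuine subgraph on $\{A,B,C,D\}$. If $G\neq K_4$, connectivity forces some $K_4$-vertex, say $A$, to have a further neighbour $w\notin\{B,C,D\}$. Then in $G+A$ the branch sets $\{A\},\{B\},\{C\},\{D\},\{A_H,w\}$ are pairwise adjacent (using $A_H\sim B,C,D$, together with $A\sim w$ and $A_H\sim w$), giving a $K_5$-minor and again contradicting (ii). Therefore $G$ has neither forbidden minor, so $G$ is outerplanar, or else $G=K_4$.

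The step I expect to be the main obstacle is precisely this $K_4$-minor case of necessity: one must rule out \emph{every} way of enlarging $K_4$ while keeping all single-twin graphs $G+g$ planar, and the two reductions ``subdivided edge $\Rightarrow K_{2,3}$'' and ``extra neighbour $\Rightarrow K_5$'' are what close off these possibilities. A secondary point needing care is the rotation-reversal claim used for (iii) in the outerplanar case, namely checking that placing $g_H$ on the opposite side of the common face $F$ really yields the cyclic order $f(g_k),\dots,f(g_1)$ around $g_H$ required by Theorem~\ref{thm:planar}(iii).
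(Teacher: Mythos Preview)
Your proof is correct. The sufficiency direction matches the paper's (both verify the three conditions of Theorem~\ref{thm:planar}, though you supply more detail, in particular the rotation-reversal argument that the paper simply asserts).

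For necessity, however, you take a genuinely different route. The paper invokes Corollary~\ref{cor:sameface}: it claims that if $G$ is planar but not outerplanar and $G\neq K_4$, then $G$ contains a vertex whose neighbours cannot all lie on a common face in any plane embedding of $G$, and hence $G\otimes G$ is non-planar. This claim is stated without justification. You instead work directly with condition~(ii) of Theorem~\ref{thm:planar} and build explicit forbidden minors inside $G+g$: a $K_{3,3}$-minor from a $K_{2,3}$-subdivision (using the false twin $a_H$ as the third vertex on the degree-$3$ side), and a $K_5$-minor from a genuine $K_4$-subgraph with an extra neighbour (using the branch set $\{A_H,w\}$). The intermediate reduction ``subdivided $K_4$ $\Rightarrow$ $K_{2,3}$-subdivision'' neatly eliminates the subdivided case. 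Your argument is longer but fully self-contained, whereas the paper's is terser but rests on an unproved structural assertion about embeddings; in effect, your minor constructions supply exactly the missing justification for the paper's claim.
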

\begin{Proof} 
If $G$ is outerplanar or $K_4$, then conditions (i), (ii), (iii) from Theorem \ref{thm:planar}
are fulfilled, so $G\otimes G$ is planar.

Suppose now that $G$ is planar but  not outerplanar. Then it contains a subdivision
of $K_{2,3}$  or a subdivision of $K_4$ (with at least one additional vertex)  as a subgraph. 
Such a graph $G$ always contains a vertex such that in every plane embedding of $G$ 
not all of its neighbours will  be on the boundary of the same face. 
Therefore  $G\otimes G$ is not planar by Corollary \ref{cor:sameface}.
\end{Proof} 

\begin{theorem}
	\label{thm:planarlowdegree}
Let $G,H$ be connected graphs and let $f: V(G)  \to V(H)$ be any mapping.
Assume that  $G$ is planar, $\Delta(G) \le 3$ and $H$ is outerplanar. Then $G\otimes_f H$ is planar.
\end{theorem}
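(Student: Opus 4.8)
The plan is to invoke the planarity characterisation of Theorem~\ref{thm:planar} and to verify its three conditions (i)--(iii) one at a time. Condition (i) is immediate, since $G$ is assumed to be planar. I would fix, once and for all, an arbitrary plane embedding of $G$; this will be the embedding witnessing (iii). Because $\Delta(G) \le 3$, every vertex $g \in V(G)$ has at most three neighbours, so the cyclic order $g_1, \dots, g_k$ around $g$ in this embedding always has $k \le 3$.

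For condition (ii), I would use the outerplanarity of $H$ directly. Embed $H$ in the plane with all its vertices on the boundary of the outer face, so that every edge of $H$ lies in the closed disk bounded by that face. Then place the new vertex $g_H$ in the outer region and join it to the (at most three) vertices $f(g')$ with $g' \in N(g)$. These new edges all emanate from the single point $g_H$, so they do not cross one another, and they remain in the outer region, so they do not cross any edge of $H$. Hence $H+g$ is planar for every $g$.

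The heart of the argument is condition (iii), and the key point is that at most three elements admit at most two distinct cyclic orders, which are reverses of one another. I would argue by the value of $k = \deg(g)$. If $k \le 2$, the required cyclic order $f(g_k),\dots,f(g_1)$ around $g_H$ is the unique cyclic order of at most two elements, so any embedding of $H+g$ of the type constructed above already satisfies it. If $k = 3$, I start from an outerplanar embedding of $H$ as above and attach $g_H$; the resulting cyclic order of the three edges around $g_H$ is then one of the two cyclic orders of $\{f(g_1),f(g_2),f(g_3)\}$. If it is the desired order $f(g_3),f(g_2),f(g_1)$ we are done; otherwise I replace the embedding of $H+g$ by its mirror reflection, which reverses every rotation and hence reverses the cyclic order around $g_H$, turning it into the other of the two possible cyclic orders, namely the desired one. (If the values $f(g_1),f(g_2),f(g_3)$ are not pairwise distinct, then $g_H$ has fewer than three neighbours in the simple graph $H+g$, and the constraint again reduces to the $k \le 2$ case.) Carrying out this local choice independently for each vertex $g$ verifies (iii).

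The main obstacle is exactly this cyclic-order matching in (iii); everything hinges on the elementary fact that three points admit only two cyclic orders and that a reflection of the plane interchanges them. This is where $\Delta(G) \le 3$ is used in an essential way, whereas the outerplanarity of $H$ is what makes both (ii) and the attachment of $g_H$ in (iii) possible. With all three conditions of Theorem~\ref{thm:planar} established, $G \otimes_f H$ is planar.
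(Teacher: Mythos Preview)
Your proof is correct and takes a genuinely different route from the paper's. The paper argues by contradiction via Kuratowski's theorem: assuming $G \otimes_f H$ is non-planar, it locates a subdivision of $K_5$ or $K_{3,3}$ and then performs a case analysis on how the branch vertices are distributed among the copies $gH$. The decisive constraint in every case is that, because $\Delta(G) \le 3$, at most three connecting edges leave any single $gH$, which is too few to realise the required paths.

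Your approach instead invokes the planarity criterion of Theorem~\ref{thm:planar} directly and verifies its three conditions constructively. This is more economical and arguably more natural, since Theorem~\ref{thm:planar} was proved precisely to decide such questions; it also yields an explicit plane embedding of $G \otimes_f H$ rather than a mere exclusion of forbidden subdivisions. Your key observation---that for $k \le 3$ there are at most two cyclic orders around $g_H$, interchanged by a reflection of the chosen embedding of $H+g$---is the exact counterpart of the paper's ``at most three connecting edges'' constraint, so the two proofs exploit the hypothesis $\Delta(G) \le 3$ at analogous points but through technically distinct mechanisms. The outerplanarity of $H$ is likewise used differently: you use it to place $g_H$ in the outer face, whereas the paper uses it to forbid subdivisions of $K_4$ and $K_{2,3}$ inside any $gH$.
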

\begin{Proof} Denote $K=G\otimes_f H$.
Suppose $K$ is not planar. Then it contains
a subdivision of $K_{3,3}$ or $K_5$ as a subgraph. 
First assume that $K$ contains a subdivision of $K_{3,3}$. There are four cases to consider,
depending on how many vertices of degree $3$ of the subdivision of $K_{3,3}$ are in the
same copy of $H$.
\begin{enumerate}
\item If every vertex  is in separate copy of $H$ in $K$, then by contracting $gH$ to a single vertex 
          for every $g \in G$ we see that  $K_{3,3}$ is a minor in $G$, so $G$ is not planar.
\item  If there are between two and four vertices  in some $gH$,
        then we need at least four edges connecting $gH$ to other copies of $H$ in $K$.
        This is not possible, since maximal degree in $G$ is at most three.
\item There are five vertices  in  some $gH$ and one vertex  in some $g'H$ 
         for $g \ne g'$.   Since $H$ is outerplanar, $gH$ cannot contain a subdivision of $K_{2,3}$.
         Therefore we need  at least  two edges going out of $gH$ to obtain a subdivision
         of $K_{2,3}$ from the five vertices in $gH$. We also need three edges going out
          of $gH$ to connect $gH$ to the vertex of $K_{3,3}$ in $g'H$. This is again not
         possible, since the maximal degree of $G$ is at most 3.
\item The only remaining possibility is that all six vertices  are in the same copy $gH$ of $H$.
        Since $H$ is outerplanar, there can be at most seven edges (or paths) between pairs of vertices
        of $K_{3,3}$ in $gH$.  The remaining two paths must go through other copies of $H$, 
        which means that we again need   at least four edges connecting $gH$ to other copies of $H$ in $K$. 
         A contradiction.
\end{enumerate}

Therefore $K$ does not contain a subdivision of $K_{3,3}$.
Next assume that $K$ contains a subdivision of $K_{5}$. There are three cases to consider,
depending on how many vertices of degree $4$ of the subdivision of $K_{5}$ are in the
same copy of $H$.
\begin{enumerate}
\item If every vertex is in separate copy of $H$ in $K$, then by contracting $gH$ to a single vertex 
          for every $g \in G$ we see that  $K_{5}$ is a minor in $G$, so $G$ is not planar.
\item  If there are between two and four vertices  in some $gH$,
        then we need at least four edges connecting $gH$ to other copies of $H$ in $K$.
        This is not possible, since maximal degree in $G$ is at most three.
\item  The only remaining possibility is that all five vertices of $K_{5}$ are in the same copy of $H$.
        Since $H$ is outerplanar, it doesn't contain a subdivision of $K_4$ or $K_{2,3}$. 
        Therefore there can be  at most eight edges (or paths) between pairs of these vertices 
        in $gH$ (in fact, there can be at most six such paths).
        The remaining two paths   must go through other copies of $H$, which means that we  need
        at least four edges connecting $gH$ to other copies of $H$ in $K$. A contradiction.
\end{enumerate}
It follows that  $K$ doesn't contain a subdivision of $K_{3,3}$ or $K_5$, so it is planar.

\end{Proof} 

If a connected graph is not planar it is natural to consider its genus. The genus of a graph
$G$ is denoted by $\gamma(G)$. Recall that by  Lemma \ref{lem:HGcopy}, graph
$G$ is a minor of $G \otimes_f H$ for any function $f:V(G) \to V(H)$, 
and $G \otimes_f H$ contains $|G|$ copies of $H$ as induced subgraphs.
 Suppose $G,H$ are connected and $f$ is arbitrary. Then
it is easy to see, cf. \cite[Theorem 4.4.2]{Mohar}, that
\begin{equation}   \label{eq:genus}
\gamma(G \otimes_f H) \ge \gamma(G) + |G|\cdot \gamma(H).
\end{equation}
Note that the bound is not sharp even if the factors are planar.
In the case of planar Sierpi\'nski product we were able to settle the case in
Theorem  \ref{thm:planar}.
It would be interesting to find some sufficient condition for the equality
in \eqref{eq:genus} to hold also for non-planar Sierpi\'nski products.


\section{Symmetry}
\label{section:symmetry}

Throughout this section let $G,H$ be connected graphs and let $f: V(G) \to V(H)$ be  any mapping. 
Recall that  the edge set of  $G \otimes_f H$ can be naturally partitioned into two subsets:
\begin{itemize} 
\item \emph{inner edges} $\{(g,h),(g,h')\}$  for every vertex $g\in V(G)$ and every edge $\{h,h'\}\in E(H)$, and
\item  \emph{connecting edges} $\{(g,f(g'),(g',f(g))\}$ for every edge $\{g,g'\} \in E(G)$. 
\end{itemize}  
We call this partition of the edge set the \emph{fundamental edge partition}.
We will say that an automorphism of $G \otimes_f H$ \emph{respects the fundamental
edge partition} if it takes inner edges to inner edges, and connecting edges  to connecting edges. 
We denote the set of all automorphisms of $G \otimes_f H$ that respect the fundamental edge partition 
by  $\tilde{A}(G,H,f)$. It is easy to see that this set is a subgroup of the whole automorphism
group of $G \otimes_f H$. If $G,H$ are connected graphs, the automorphisms that respect the fundamental edge partition have the following useful property.

\begin{proposition}
Let $G$ and $H$ be connected  graphs. Then every 
automorphism $\tilde{\gamma} \in  \tilde{A}(G,H,f)$
permutes  the subgraphs $gH$, $g \in G$.
In particular, the restriction $\tilde{\gamma}|_{V(gH)}:V(gH) \to V(g'H)$,
where $g' \in V(G)$, is a graph isomorphism.
\end{proposition}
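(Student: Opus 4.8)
The plan is to characterise the copies $gH$ intrinsically in terms of the inner edges, and then to invoke the elementary fact that any graph automorphism permutes connected components. First I would introduce the spanning subgraph $I$ of $G \otimes_f H$ whose edge set consists of exactly the inner edges. By the description of the fundamental edge partition, an inner edge $\{(g,h),(g,h')\}$ only ever joins two vertices lying in the same copy $gH$; hence $I$ has no edge between distinct copies. On the other hand, by Lemma~\ref{lem:HGcopy}(i) each induced subgraph $gH$ is isomorphic to $H$, which is connected by hypothesis, so each vertex set $V(gH)$ induces a connected subgraph of $I$. Consequently the connected components of $I$ are precisely the sets $V(gH)$, $g \in V(G)$.

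Next I would use the defining property of $\tilde\gamma \in \tilde{A}(G,H,f)$: since $\tilde\gamma$ maps inner edges to inner edges and is a bijection of the common vertex set $V(G) \times V(H)$, it restricts to an automorphism of $I$. Every automorphism of a graph permutes its connected components, so $\tilde\gamma$ sends each component $V(gH)$ onto some component $V(g'H)$. This gives a permutation of the blocks of the partition $\{V(gH) : g \in V(G)\}$, which is exactly the assertion that $\tilde\gamma$ permutes the subgraphs $gH$.

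Finally, to see that each restriction $\tilde\gamma|_{V(gH)} : V(gH) \to V(g'H)$ is a graph isomorphism, I would check that it is an adjacency-preserving bijection in both directions. It is a bijection because $\tilde\gamma$ is. An edge of the induced subgraph $gH$ is an inner edge with both endpoints in $V(gH)$; its image under $\tilde\gamma$ is again an inner edge whose endpoints both lie in $V(g'H)$, hence an edge of the induced subgraph $g'H$. Applying the same reasoning to $\tilde\gamma^{-1}$---which belongs to $\tilde{A}(G,H,f)$ since this set is a subgroup---shows the edge correspondence is onto, so the restriction is an isomorphism.

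I do not expect a serious obstacle in this argument; the one point that genuinely requires care is the explicit use of the connectedness of $H$. This hypothesis is precisely what guarantees that the inner-edge components coincide with whole copies rather than splitting some $gH$ into several pieces. Were $H$ disconnected, this identification would break down, and an automorphism respecting the fundamental edge partition could in principle merge or mix fragments of different copies, so the connectedness assumption is where the real content of the statement sits.
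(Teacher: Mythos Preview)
Your argument is correct. The paper states this proposition without proof, merely remarking beforehand that automorphisms respecting the fundamental edge partition ``have the following useful property''; your approach---identifying the copies $gH$ as precisely the connected components of the spanning subgraph on the inner edges and then using that automorphisms permute components---is exactly the natural way to supply the omitted details, and your observation that connectedness of $H$ is the genuine content is on point.
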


In this section we first show that any automorphism of $G \otimes_f H$
that respects the fundamental edge partition induces automorphisms of $G$ and $H$. And conversely,
we define two families of automorphisms of  $G \otimes_f H$
that respect the fundamental edge partition using automorphisms of $G$ and $H$. 
Then we show that  in many cases all the automorphisms 
of $G \otimes_f H$ respect the fundamental edge partition.
Finally, we focus on the case when $G=H$ and $f$ is an automorphism. 
In this case we can completely describe the group of automorphisms 
that respect the fundamental edge partition.


\subsection{Automorphisms that respect the fundamental edge partition}
\label{subsec:fep}

Let $\tilde\gamma$ be an automorphism  of $G \otimes_f H$ 
that respects the fundamental edge partition. Then it permutes
the subgraphs $gH$, $g \in G$.
Define a mapping $\gamma$ such that $\gamma(g)=g^\prime$
 if $\tilde\gamma$ maps $gH$ to $g^\prime H$.
Obviously,  $\gamma$ is a bijection. Let $\{g,g_1\}$ be 
an edge of $G$. Then $\{(g,f(g_1)),(g_1,f(g))\}$
is a connecting edge of $G \otimes_f H$. Since $\tilde\gamma$ respects
the fundamental edge partition, it maps this edge to another 
connecting edge, say $\{(g^\prime,f(g_1^\prime)),(g_1^\prime,f(g^\prime))\}$,
where $g^\prime$ and $g_1^\prime$ are adjacent in $G$. But then $\gamma$ maps the edge 
$\{g,g_1\}$ to an edge (i.e. to $\{g^\prime,g_1^\prime\}$) and  $\gamma$ is an automorphism.
We will say that $\gamma$ is the \emph{projection} of $\tilde\gamma$ on $G$.
Conversely, $\tilde\gamma$ is a \emph{lift} of $\gamma$. Note that projection of 
$\tilde\gamma \in \Aut(G \otimes_f H)$ on $G$ is uniquely defined. However, given an automorphism
of $G$, it can  have a unique lift,  more than one lift or none at all.

On the other hand, the action of $\tilde\gamma$ on every copy of $gH$ in $G \otimes_f H$
induces an automorphism $\gamma_g$ of $H$, defined by
$\gamma_g(h)=h^\prime$ if $\tilde\gamma$ sends $(g,h)$ to $(g_1,h^\prime)$
for some $g_1 \in V(G)$ and $h^\prime \in V(H)$.

We will now introduce two families of automorphism of $G \otimes_f H$
that can be obtained from automorphisms of $G$ and $H$. All such automorphisms
respect the fundamental edge partition.  

\begin{definition}   \label{def:auto1}
Let $G,H$ be connected graphs and let $f:V(G) \to V(H)$ be any function. 
Let $\alpha \in \Aut(G)$ and let $\B: V(G) \to \Aut(H)$ be any mappng. For
simplicity we will write $\beta_g$ instead of $\B(g)$ for $g \in V(g)$.
Define a mapping $\Psi(\alpha,\B):V(G \otimes_f H) \to V(G \otimes_f H)$  by
$$
\Psi(\alpha,\B): (g,h) \mapsto (\alpha(g),\beta_g(h)).
$$
If $\B$ is a constant function, say $\beta_g = \beta$ for all $g \in V(G)$, 
we denote $\Psi(\alpha,\B)$ by $\Psi(\alpha,\beta)$.
\end{definition}


By the discussion at the beginning of this section, we see that the following holds.

\begin{theorem}    \label{thm:autoform}
Let $G,H$ be connected graphs and let $f:V(G) \to V(H)$ any function. 
Every automorphism of $G \otimes_f H$ that respects the fundamental edge partition  
is of form $\Psi(\alpha,\B)$ for some $\alpha \in \Aut(G)$ and some 
mapping $\B:V(G) \to \Aut(H)$.
\end{theorem}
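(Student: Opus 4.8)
The plan is to read off the decomposition directly from the two constructions set up in the preceding discussion: the projection of $\tilde\gamma$ onto $G$ and the family of induced maps on the copies $gH$. Given any automorphism $\tilde\gamma \in \tilde{A}(G,H,f)$, I would first invoke the Proposition above, which guarantees that $\tilde\gamma$ permutes the subgraphs $gH$, $g \in V(G)$, and that each restriction $\tilde\gamma|_{V(gH)}$ is an isomorphism onto some $g'H$. This lets me define $\alpha := \gamma$, the projection of $\tilde\gamma$, by setting $\alpha(g) = g'$ whenever $\tilde\gamma$ maps $gH$ to $g'H$. As already argued in the discussion, $\alpha$ is a bijection of $V(G)$ carrying edges to edges (a connecting edge between $gH$ and $g_1H$ must map to a connecting edge, hence $\{\alpha(g),\alpha(g_1)\} \in E(G)$), so $\alpha \in \Aut(G)$.

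Next I would build the mapping $\B$. For each $g \in V(G)$ the restriction $\tilde\gamma|_{V(gH)}$ is an isomorphism $gH \to \alpha(g)H$. By Lemma~\ref{lem:HGcopy}(i) both $gH$ and $\alpha(g)H$ are isomorphic to $H$ via projection onto the second coordinate, so composing these identifications with $\tilde\gamma|_{V(gH)}$ produces a map $\beta_g : V(H) \to V(H)$; explicitly $\beta_g(h) = h'$ where $\tilde\gamma(g,h) = (\alpha(g),h')$. This is precisely the map $\gamma_g$ of the discussion, and I would set $\B(g) = \beta_g$, obtaining $\B : V(G) \to \Aut(H)$.

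The step needing genuine justification, and the one I regard as the crux, is that each $\beta_g$ is an \emph{automorphism} of $H$ rather than merely a bijection. Here the hypothesis that $\tilde\gamma$ respects the fundamental edge partition is decisive: $\tilde\gamma$ sends inner edges of $gH$ to inner edges of $\alpha(g)H$, and under the identifications of Lemma~\ref{lem:HGcopy}(i) the inner edges of a copy correspond exactly to the edges of $H$, so $\beta_g$ preserves adjacency in both directions and thus lies in $\Aut(H)$. Once this is in place the formula $\tilde\gamma(g,h) = (\alpha(g),\beta_g(h)) = \Psi(\alpha,\B)(g,h)$ holds immediately from the definitions of $\alpha$ and $\beta_g$, giving $\tilde\gamma = \Psi(\alpha,\B)$. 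I expect no serious obstacle beyond this verification, since the theorem essentially records constructions already carried out; the only care required is to note that $h'$ in $\tilde\gamma(g,h)=(\alpha(g),h')$ is determined by $g$ and $h$ alone, which is automatic as $\tilde\gamma$ is a function.
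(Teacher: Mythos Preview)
Your proposal is correct and follows essentially the same route as the paper: the authors simply refer back to the discussion preceding the theorem, which defines the projection $\gamma$ (your $\alpha$) and the induced automorphisms $\gamma_g$ (your $\beta_g$), and you have unpacked that discussion with care. The only point you make more explicit than the paper is the verification that each $\beta_g$ lies in $\Aut(H)$, which is indeed immediate from the fact that $\tilde\gamma$ sends inner edges to inner edges.
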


We now determine when the mapping $\Psi(\alpha,\B)$ from Definition  \ref{def:auto1} is an automorphism.

\begin{proposition}
\label{prop:auto11}
The mapping $\Psi(\alpha,\B)$ is always a bijection.
\end{proposition}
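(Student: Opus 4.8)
The plan is to show that $\Psi(\alpha,\B)$ is a bijection on $V(G\otimes_f H)$ by exhibiting an explicit two-sided inverse, which is the cleanest route given that the map is defined coordinate-wise. Recall that $\Psi(\alpha,\B)$ sends $(g,h)$ to $(\alpha(g),\beta_g(h))$, where $\alpha\in\Aut(G)$ and each $\beta_g\in\Aut(H)$. Since the vertex set is the finite product $V(G)\times V(H)$, it suffices to produce a map that undoes $\Psi(\alpha,\B)$ on both sides; injectivity and surjectivity then follow at once.

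First I would write down the candidate inverse. The natural guess is the map $\Phi$ defined by
$$
\Phi:(g,h)\mapsto\bigl(\alpha^{-1}(g),\,\beta_{\alpha^{-1}(g)}^{-1}(h)\bigr),
$$
which makes sense because $\alpha$ is an automorphism of $G$ (so $\alpha^{-1}$ exists) and each $\beta_{g}$ is an automorphism of $H$ (so each inverse $\beta_{g}^{-1}$ exists). The subtle point, and the one place where care is needed, is the choice of index on $\beta$: to invert the second coordinate of $\Psi(\alpha,\B)(g,h)=(\alpha(g),\beta_g(h))$ one must apply $\beta_g^{-1}$, and $g$ must be recovered from the first coordinate of the image via $\alpha^{-1}$. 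This bookkeeping of the index is the main (very mild) obstacle here; getting it right is what the whole verification hinges on.

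Next I would check the two compositions. Applying $\Phi$ after $\Psi(\alpha,\B)$ to $(g,h)$ gives first $(\alpha(g),\beta_g(h))$ and then, using $\alpha^{-1}(\alpha(g))=g$ as the recovered first index, the second coordinate becomes $\beta_{g}^{-1}(\beta_g(h))=h$, so the result is $(g,h)$; hence $\Phi\circ\Psi(\alpha,\B)=\mathrm{id}$. For the other order, applying $\Psi(\alpha,\B)$ after $\Phi$ to $(g,h)$ yields first $\bigl(\alpha^{-1}(g),\beta_{\alpha^{-1}(g)}^{-1}(h)\bigr)$, and then the first coordinate becomes $\alpha(\alpha^{-1}(g))=g$ while the second becomes $\beta_{\alpha^{-1}(g)}\bigl(\beta_{\alpha^{-1}(g)}^{-1}(h)\bigr)=h$, so again we recover $(g,h)$ and $\Psi(\alpha,\B)\circ\Phi=\mathrm{id}$.

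Having a two-sided inverse immediately yields that $\Psi(\alpha,\B)$ is a bijection of $V(G\otimes_f H)$, completing the proof. I would note that this argument uses nothing beyond the fact that $\alpha$ and every $\beta_g$ are bijections; in particular it does not use any adjacency information, which is consistent with the statement asserting only that $\Psi(\alpha,\B)$ is \emph{always} a bijection (the question of when it additionally preserves edges, and so is an automorphism, is presumably treated separately in the sequel).
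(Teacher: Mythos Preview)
Your proof is correct. The paper's own argument is even terser: it notes that since the domain and codomain coincide (and are finite), it suffices to check injectivity, which is immediate from the injectivity of $\alpha$ and each $\beta_g$. Your explicit two-sided inverse is a minor variation on the same idea---both proofs use nothing beyond the fact that $\alpha$ and the $\beta_g$ are bijections---with the small bonus that your argument does not rely on finiteness of the vertex set.
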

\begin{proof}
It is enough to prove that $\Psi(\alpha,\B)$ is injective.
This is straightforward  since $\alpha$ and $\beta_g$, $g \in V(G)$,
are all injective.
\end{proof}

\begin{proposition}
\label{prop:auto12}
The mapping $\Psi(\alpha,\B)$ is an automorphism if and only if
for every $g \in V(G)$ we have $f \circ \alpha = \beta_g \circ f$ on $N(g)$.
Moreover, in this case $\Psi(\alpha,\B)$ respects the fundamental edge partition.
\end{proposition}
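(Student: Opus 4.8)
The plan is to lean on Proposition~\ref{prop:auto11}, which already supplies that $\Psi(\alpha,\B)$ is a bijection of $V(G \otimes_f H)$. Since $G \otimes_f H$ is finite, a vertex bijection is an automorphism exactly when it maps edges to edges (then, by injectivity and finiteness, it permutes the edge set, and its inverse maps edges to edges as well). So it suffices to follow the images of the two edge types of the fundamental edge partition and record precisely when each image is again an edge; this will simultaneously deliver the \emph{moreover} assertion, because the computation shows that edges are carried type to type.

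First I would dispose of the inner edges, which need no hypothesis. An inner edge $\{(g,h),(g,h')\}$ with $\{h,h'\}\in E(H)$ has image $\{(\alpha(g),\beta_g(h)),(\alpha(g),\beta_g(h'))\}$, and since $\beta_g\in\Aut(H)$ the pair $\{\beta_g(h),\beta_g(h')\}$ is again an edge of $H$; thus inner edges are always sent to inner edges of $\alpha(g)H$.

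The core of the proof is the connecting edges. I would take the connecting edge $\{(g,f(g')),(g',f(g))\}$ of an edge $\{g,g'\}\in E(G)$ and compute its image
\[
\{(\alpha(g),\beta_g(f(g'))),(\alpha(g'),\beta_{g'}(f(g)))\}.
\]
Because $\{g,g'\}$ is an edge of a simple graph, $g\ne g'$, and $\alpha$ being a bijection gives $\alpha(g)\ne\alpha(g')$, so the two endpoints lie in different copies $\alpha(g)H$ and $\alpha(g')H$; an inner edge keeps both endpoints in one copy, so if this image is an edge it must be a connecting edge. By Lemma~\ref{lem:atmostone}(i) the unique possible connecting edge between the (adjacent, since $\alpha\in\Aut(G)$) copies $\alpha(g)H$ and $\alpha(g')H$ is $\{(\alpha(g),f(\alpha(g'))),(\alpha(g'),f(\alpha(g)))\}$. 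Matching coordinates, the image is exactly this connecting edge if and only if
\[
\beta_g(f(g'))=f(\alpha(g')) \qquad\text{and}\qquad \beta_{g'}(f(g))=f(\alpha(g)).
\]

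Finally I would repackage these per-edge identities into the stated global condition, which is the only delicate bookkeeping. For the \emph{if} direction, assuming $f\circ\alpha=\beta_g\circ f$ on $N(g)$ for every $g$, the first identity above is this hypothesis at $g$ applied to $g'\in N(g)$, and the second is the hypothesis at $g'$ applied to $g\in N(g')$; hence every connecting edge goes to a connecting edge, every inner edge to an inner edge, so $\Psi(\alpha,\B)$ maps edges to edges and is therefore an automorphism respecting the fundamental edge partition. For the \emph{only if} direction, if $\Psi(\alpha,\B)$ is an automorphism then for each edge $\{g,g'\}$ the image above is an edge joining distinct copies, hence the displayed connecting edge, forcing $\beta_g(f(g'))=f(\alpha(g'))$; as $g'$ runs through $N(g)$ this says $f\circ\alpha=\beta_g\circ f$ on $N(g)$, and letting $g$ vary over $V(G)$ yields the condition. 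The point to get right, and the only real obstacle, is that the single neighbourhood identity encodes both per-edge equations at once, through the symmetry $g'\in N(g)\iff g\in N(g')$.
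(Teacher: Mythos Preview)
Your proof is correct and follows essentially the same route as the paper's: both compute the image of each edge type, see that inner edges always go to inner edges, and then match the image of a connecting edge against the unique connecting edge between $\alpha(g)H$ and $\alpha(g')H$ to extract the two identities. Your version is slightly more explicit in two places---you invoke Lemma~\ref{lem:atmostone}(i) for uniqueness and argue directly that an image with endpoints in distinct copies cannot be an inner edge, whereas the paper infers ``connecting $\to$ connecting'' from the fact that inner edges already saturate the inner-edge count---but these are cosmetic differences, not different ideas.
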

\begin{proof}
We first show that $\Psi(\alpha,\B)$ always maps an inner edge to an inner edge.
To see this, let  $e=\{(g,h_1),(g,h_2)\}$ be an inner edge. 
Then $\Psi(\alpha,\B)$ maps edge $e$ to edge $\{(\alpha(g),\beta_g(h_1)),$$(\alpha(g),\beta_g(h_2))\}$,
which is an inner edge since $\beta_g$ is an automorphism of $H$.

Suppose now that $\Psi(\alpha,\B)$ is an automorphism. Since $\Psi(\alpha,\B)$ maps
inner edges to inner edges, it must  map connecting edges to connecting edges.
Let $e=\{(g,f(g_1)),(g_1,f(g))\}$ be a connecting edge.
Then $\Psi(\alpha,\B)(e)=\{\alpha(g),\beta_g(f(g_1)),(\alpha(g_1),\beta_{g_1}(f(g))\}$
is also a connecting edge.
Therefore $f(\alpha(g_1))=\beta_g(f(g_1))$.
Since $g_1$ can be any neighbour of $g$  in $G$, we have
$f \circ \alpha = \beta_g \circ f$ on $N(g)$.

Conversely, let $f \circ \alpha = \beta_g \circ f$ on $N(g)$ for every $g \in V(G)$.
Let $e=\{(g,f(g_1)),(g_1,f(g))\}$ be a connecting edge in $G \otimes_f H$.
Then  $\Psi(\alpha,\B)(e)=\{\alpha(g),\beta_g(f(g_1)),(\alpha(g_1),\beta_{g_1}(f(g))\}$.
Since $f(\alpha(g))=\beta_{g_1}(f(g))$ and $f(\alpha(g_1))=\beta_{g}(f(g_1))$,
$\Psi(\alpha,\B)(e)$ is a connecting edge. Therefore $\Psi(\alpha,\B)$ is an automorphism.
\end{proof}

\begin{proposition}
\label{prop:auto12a}
The mapping $\Psi(\alpha,\beta)$ is an automorphism if and only if
$f \circ \alpha = \beta \circ f$.
\end{proposition}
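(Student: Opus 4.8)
The plan is to obtain this statement as the special case of Proposition~\ref{prop:auto12} in which the mapping $\B$ is constant. Concretely, $\Psi(\alpha,\beta)$ is precisely $\Psi(\alpha,\B)$ for the constant function $\B$ with $\beta_g=\beta$ for every $g \in V(G)$, so Proposition~\ref{prop:auto12} applies verbatim and tells us that $\Psi(\alpha,\beta)$ is an automorphism if and only if $f \circ \alpha = \beta \circ f$ holds on $N(g)$ for every $g \in V(G)$. No new computation is needed for this step; it is a direct substitution.

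It then remains to collapse this family of local conditions into a single global one. First I would observe that requiring $f \circ \alpha = \beta \circ f$ to hold on $N(g)$ for every $g \in V(G)$ is the same as requiring the equality to hold on the set $\bigcup_{g \in V(G)} N(g)$, that is, on every non-isolated vertex of $G$. The key point is that $G$ is connected, and a connected graph on more than one vertex has no isolated vertices, so $\bigcup_{g \in V(G)} N(g) = V(G)$. Hence the local conditions taken together are equivalent to $f \circ \alpha = \beta \circ f$ holding on all of $V(G)$, which is exactly the asserted identity $f \circ \alpha = \beta \circ f$.

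The only delicate point is this last reduction, and it is precisely where connectivity of $G$ enters: without it the forward direction could fail on an isolated vertex, where $\Psi(\alpha,\beta)$ imposes no constraint at all yet the global identity $f \circ \alpha = \beta \circ f$ might be violated. The degenerate case $|G|=1$, in which $G\otimes_f H \cong H$ by Lemma~\ref{lem:trivial} and $\Psi(\alpha,\beta)$ is automatically an automorphism, is the honest illustration of why the standing hypothesis that $G$ is connected (and tacitly nontrivial) is what makes the biconditional correct. Everything beyond this observation is immediate from Proposition~\ref{prop:auto12}.
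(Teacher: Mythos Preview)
Your proposal is correct and follows essentially the same approach as the paper: invoke Proposition~\ref{prop:auto12} with the constant map $\B\equiv\beta$, then use connectivity of $G$ to pass from the local conditions on each $N(g)$ to the global identity on $V(G)$. Your treatment is more explicit about the union $\bigcup_g N(g)$ and the degenerate one-vertex case, but the substance is the same.
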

\begin{proof}
Let $f \circ \alpha = \beta \circ f$ on $N(G)$ for every $g \in V(G)$.
Since $G$ is connected it has no isolated points and so
$f \circ \alpha = \beta \circ f$ on $V(G)$.
The claim then follows from Proposition \ref{prop:auto12}.
\end{proof}

A few special cases now follow as simple corollaries.

\begin{corollary}
\label{cor:auto12}
Suppose $G=H$ and $f$ is an automorphism. 
Then  the mapping $\Psi(\alpha,\beta)$ is an automorphism if and only if
$\beta = f \circ \alpha \circ f^{-1}$.
\end{corollary}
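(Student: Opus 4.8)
The plan is to deduce this directly from Proposition \ref{prop:auto12a}, specializing to the situation $G=H$ with $f$ an automorphism. Proposition \ref{prop:auto12a} already tells us that $\Psi(\alpha,\beta)$ is an automorphism if and only if the identity $f \circ \alpha = \beta \circ f$ holds as a function on $V(G)$. So the entire content of the corollary is to rewrite this single functional equation in the equivalent form $\beta = f \circ \alpha \circ f^{-1}$.

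The key observation is that under the hypothesis $G=H$ and $f \in \Aut(G)$, the map $f$ is a bijection of $V(G)=V(H)$ onto itself, so $f^{-1}$ exists and is again an automorphism. First I would recall the equivalence from Proposition \ref{prop:auto12a}. Then I would compose both sides of $f \circ \alpha = \beta \circ f$ on the right with $f^{-1}$; since $f \circ f^{-1}$ is the identity on $V(H)$, the right-hand side becomes $\beta$ and the left-hand side becomes $f \circ \alpha \circ f^{-1}$, yielding $\beta = f \circ \alpha \circ f^{-1}$. The same step run in reverse, composing on the right with $f$, recovers $f \circ \alpha = \beta \circ f$, so the two conditions are genuinely equivalent and not merely one-directional.

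There is no real obstacle here; the statement is a formal corollary, and the only thing to check is that composing with $f^{-1}$ is legitimate, which is precisely where the hypothesis \emph{$f$ is an automorphism} is used. (In the general Proposition \ref{prop:auto12a} the map $f$ need not be invertible, so the defining condition cannot be solved for $\beta$.) I would also note in passing that the resulting $\beta = f \circ \alpha \circ f^{-1}$ is automatically an element of $\Aut(H)$, being a composition of automorphisms of $H=G$, so it is a legitimate choice in the definition of $\Psi(\alpha,\beta)$.
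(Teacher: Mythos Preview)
Your proposal is correct and matches the paper's approach: the corollary is stated immediately after Proposition~\ref{prop:auto12a} with no separate proof, so the intended argument is exactly the one you give---invoke the equivalence $f\circ\alpha=\beta\circ f$ and compose on the right with $f^{-1}$, which is available precisely because $f$ is assumed to be an automorphism.
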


\begin{corollary}
\label{cor:auto14}
Suppose $G=H$ and $f$ is the identity mapping.
Then the mapping $\Psi(\alpha,\beta)$ is an automorphism if and only if $\alpha=\beta$.
\end{corollary}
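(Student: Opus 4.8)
The plan is to reduce this to the previously established Corollary~\ref{cor:auto12} by specializing the function $f$ to the identity mapping. Since $G=H$, we are in the situation where $f:V(G)\to V(G)$, and the hypothesis that $f$ is the identity certainly makes $f$ an automorphism of $G$, so the hypotheses of Corollary~\ref{cor:auto12} are satisfied.

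First I would invoke Corollary~\ref{cor:auto12}, which states that under the assumption $G=H$ with $f$ an automorphism, the mapping $\Psi(\alpha,\beta)$ is an automorphism if and only if $\beta = f\circ\alpha\circ f^{-1}$. Then I would simply substitute $f = \mathrm{id}$ into this criterion. Since the identity mapping satisfies $f^{-1}=\mathrm{id}$ as well, the composite $f\circ\alpha\circ f^{-1}$ collapses to $\mathrm{id}\circ\alpha\circ\mathrm{id}=\alpha$. Hence the condition $\beta=f\circ\alpha\circ f^{-1}$ becomes exactly $\beta=\alpha$, which is the desired equivalence.

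Alternatively, to keep the corollary self-contained one could bypass Corollary~\ref{cor:auto12} and appeal directly to Proposition~\ref{prop:auto12a}: with $f=\mathrm{id}$, the equation $f\circ\alpha=\beta\circ f$ reads $\alpha=\beta$, giving the result in one line. Either route is essentially immediate.

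I do not expect any genuine obstacle here; the statement is a routine specialization and the entire content is the algebraic simplification $f\circ\alpha\circ f^{-1}=\alpha$ when $f$ is the identity. The only thing to be careful about is to state clearly that the identity map trivially qualifies as an automorphism, so that the cited corollary (or proposition) genuinely applies, and that conjugation by the identity is trivial. No case analysis or structural argument about the graphs is needed beyond what the earlier results already provide.
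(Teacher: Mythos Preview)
Your proposal is correct and matches the paper's approach: the paper presents this as an immediate corollary (with no separate proof) of the preceding results, so deriving it either by specializing Corollary~\ref{cor:auto12} with $f=\mathrm{id}$ or by reading off $\alpha=\beta$ from Proposition~\ref{prop:auto12a} is exactly what is intended.
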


\begin{corollary}
\label{cor:auto13}
Suppose $V(G) \subseteq V(H)$, $f$ is the identity mapping on its domain
and $\beta|_{V(G)}=\alpha$. Then the mapping $\Psi(\alpha,\beta)$
is an automorphism.
\end{corollary}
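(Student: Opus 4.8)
The plan is to reduce the claim immediately to Proposition~\ref{prop:auto12a}, which asserts that $\Psi(\alpha,\beta)$ is an automorphism precisely when $f \circ \alpha = \beta \circ f$ as functions on $V(G)$. So the whole task becomes the verification of this single identity under the stated hypotheses; no direct edge-by-edge argument is needed, since Proposition~\ref{prop:auto12a} already packages the inner-edge and connecting-edge analysis.

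First I would unwind both sides of the identity on an arbitrary vertex $g \in V(G)$. Since $\alpha \in \Aut(G)$, we have $\alpha(g) \in V(G)$, and because $f$ is the identity on its domain $V(G)$, it follows that $f(\alpha(g)) = \alpha(g)$. On the other side, $f(g) = g$, so $\beta(f(g)) = \beta(g)$. Hence the required identity $f \circ \alpha = \beta \circ f$ is equivalent to the pointwise statement $\alpha(g) = \beta(g)$ for every $g \in V(G)$. The hypothesis $\beta|_{V(G)} = \alpha$ is exactly this, so the identity holds and Proposition~\ref{prop:auto12a} delivers the conclusion.

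The only point that needs care—rather than a genuine obstacle—is the bookkeeping of domains: one must observe that $\alpha$ preserves $V(G)$, so that $f$ may legitimately be applied as the identity to $\alpha(g)$, and that $\beta$ is being compared with $\alpha$ only on the subset $V(G) \subseteq V(H)$, not on all of $V(H)$, which is precisely what $\beta|_{V(G)} = \alpha$ provides. Once these two facts are noted, the verification is a one-line substitution, and the corollary follows.
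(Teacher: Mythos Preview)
Your proposal is correct and follows exactly the approach implicit in the paper: the corollary is stated without proof as an immediate consequence of Proposition~\ref{prop:auto12a}, and your verification that $f\circ\alpha=\beta\circ f$ under the hypotheses is precisely the intended one-line check.
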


\begin{remark}
If $f$ is injective and $G \ne H$, we can always relabel the vertices of $G,H$
such that $f$ is the identity on its domain.
\end{remark}

We now give some examples showing that $f$ need not be injective or surjective
and we can still have automorphisms of type $\Psi(\alpha,\B)$. 
Also, if $G=H$, the mapping $f$ need not be an automorphism.


\begin{example}
Let $G=K_3$ and $H=K_{3,3}$ with $V(G)=\{1,2,3\}$ and $V(H)=\{1,2,\dots,6\}$.
Let $f:V(G) \to V(H)$ map $1 \mapsto 1$, $2 \mapsto 3$, $3 \mapsto 5$.
Let $\alpha =(1 \ 2 \ 3)$, $\beta_1=(1 \ 3 \ 5)(2 \ 4\ 6)$,
$\beta_2=(1 \ 3 \ 5)(2 \ 6\ 4)$, $\beta_3=(1 \ 3 \ 5)$
and let $\B:V(G) \to \Aut(G)$, $B(g) = \beta_g$.
Then $f \circ \alpha = \beta_1 \circ f  = \beta_2 \circ f = \beta_3 \circ f$ and 
$$
\Psi(\alpha,\B) =(11 \ 23 \ 35)(12 \ 24 \ 32)(13 \ 25 \ 31)(14 \ 26 \ 34)(15 \ 21 \ 33)(16 \ 22 \ 36)
$$ is an automorphism of $G \otimes_f H$ that cyclically permutes the subgraphs $gH$, see Figure \ref{fig:ex1}.
\begin{figure}[htb]  
\begin{center}
\begin{tikzpicture}[style=thick,scale=0.8,x=1.0cm,y=1.0cm]
	\def\vr{3pt/0.8}
	\begin{small}
	\draw (-5,-1)--(-3,-1)--(-4,0.7)--cycle;
	\draw [fill=white] (-4,0.7) circle (\vr);
		\draw (-4,0.9) node [anchor = south] {1};
	\draw [fill=white] (-5,-1) circle (\vr) node [anchor = north east] {2};
	\draw [fill=white] (-3,-1) circle (\vr) node [anchor = north west] {3};
	\draw (0,0.7)--(-0.5,-0.1)--(0,-1)--(1,-1)--(1.5,-0.1)--(1,0.7)--cycle;
	\draw (0,-1)--(1,0.7);
	\draw (0,0.7)--(1,-1);
	\draw (1.5,-0.1)--(-0.5,-0.12);
	\draw [fill=white] (0,-1) circle (\vr) node [anchor = north east] {3};
	\draw [fill=white] (1,-1) circle (\vr) node [anchor = north west] {4};
	\draw [fill=white] (1.5,-0.1) circle (\vr);
		\draw (1.6,-0.1) node [anchor = west] {5};
	\draw [fill=white] (1,0.7) circle (\vr) node [anchor = south west] {6};
	\draw [fill=white] (0,0.7) circle (\vr) node [anchor = south east] {1};
	\draw [fill=white] (-0.5,-0.1) circle (\vr);
		\draw (-0.6,-0.1) node [anchor = east] {2};
%
	\draw (7,1.5)--(5,-0.3);
	\draw (8.5,2.3)--(9,-0.3);
	\draw (6.5,-1.1)--(9,-2);
	\draw (7,1.5)--(8,1.5)--(8.5,2.3)--(8,3.2)--(7,3.2)--(6.5,2.3)--cycle;
	\draw (7,1.5)--(8,3.2);
	\draw (7,3.2)--(8,1.5);
	\draw (8.5,2.3)--(6.5,2.3);
	\draw [fill=white] (7,1.5) circle (\vr);
		\draw (7,1.4) node [anchor = north] {13};
	\draw [fill=white] (8,1.5) circle (\vr);
		\draw (8,1.4) node [anchor = north] {14};
	\draw [fill=white] (8.5,2.3) circle (\vr);
		\draw (8.6,2.3) node [anchor = west] {15};
	\draw [fill=white] (8,3.2) circle (\vr);
		\draw (8,3.3) node [anchor = south] {16};
	\draw [fill=white] (7,3.2) circle (\vr);
		\draw (7,3.3) node [anchor = south] {11};
	\draw [fill=white] (6.5,2.3) circle (\vr);
		\draw (6.4,2.3) node [anchor = east] {12};
	\draw (5,-0.3)--(4.5,-1.1)--(5,-2)--(6,-2)--(6.5,-1.1)--(6,-0.3)--cycle;
	\draw (5,-0.3)--(6,-2);
	\draw (5,-2)--(6,-0.3);
	\draw (6.5,-1.1)--(4.5,-1.1);
	\draw [fill=white] (5,-2) circle (\vr);
		\draw (5,-2.1) node [anchor = north] {23};
	\draw [fill=white] (6,-2) circle (\vr);
		\draw (6,-2.1) node [anchor = north] {24};
	\draw [fill=white] (6.5,-1.1) circle (\vr);
		\draw (6.55,-0.95) node [anchor = west] {25};
	\draw [fill=white] (6,-0.3) circle (\vr);
		\draw (6.,-0.2) node [anchor = south] {26};
	\draw [fill=white] (5,-0.3) circle (\vr);
		\draw (5,-0.2) node [anchor = south] {21};
	\draw [fill=white] (4.5,-1.1) circle (\vr);
		\draw (4.4,-1.1) node [anchor = east] {22};
	\draw (9,-0.3)--(8.5,-1.1)--(9,-2)--(10,-2)--(10.5,-1.1)--(10,-0.3)--cycle;
	\draw (9,-0.3)--(10,-2);
	\draw (9,-2)--(10,-0.3);
	\draw (10.5,-1.1)--(8.5,-1.1);
	\draw [fill=white] (8.5,-1.1) circle (\vr);
		\draw (8.4,-1.1) node [anchor = east] {32};
	\draw [fill=white] (9,-2) circle (\vr);
		\draw (9,-2.1) node [anchor = north] {33};
	\draw [fill=white] (10,-2) circle (\vr);
		\draw (10,-2.1) node [anchor = north] {34};
	\draw [fill=white] (10.5,-1.1) circle (\vr);
		\draw (10.6,-1.1) node [anchor = west] {35};
	\draw [fill=white] (10,-0.3) circle (\vr);
		\draw (10,-0.2) node [anchor = south] {36};
	\draw [fill=white] (9,-0.3) circle (\vr);
		\draw (9,-0.3) node [anchor = east] {31};
	\end{small}
\end{tikzpicture}
\caption{Graphs $K_3$, $K_{3,3}$ and their Sierpi\'nski product with respect to 
$f:V(K_3) \to V(K_{3,3})$, $f: 1 \mapsto 1, 2 \mapsto 3, 3 \mapsto 5$.
}
 \label{fig:ex1}
\end{center}
\end{figure}
\end{example}

\begin{example}
Let $G = H = K_{1,3}$ with edge set $\{\{1,2\},\{2,3\},\{2,4\}\}$, and let $f:V(G) \to V(G)$ be
defined as  $f=(1 \ 2 \ 3 \ 4)$.
Note that $f$ is a bijection that is not an automorphism of $G$.
If $\alpha =(3 \ 4)$ and $\beta= f \circ \alpha \circ f^{-1} = (1 \ 4)$, then 
$f \circ \alpha = \beta \circ f$ and 
$$
\Psi(\alpha,\beta)=(11 \ 14)(21 \ 24)(31 \ 44)(32 \ 42)(33 \ 43)(34 \ 41)
$$ 
is an automorphism of $G \otimes_f G$, 
that swaps copies $3G$ and $4G$, see Figure \ref{fig:ex2}.

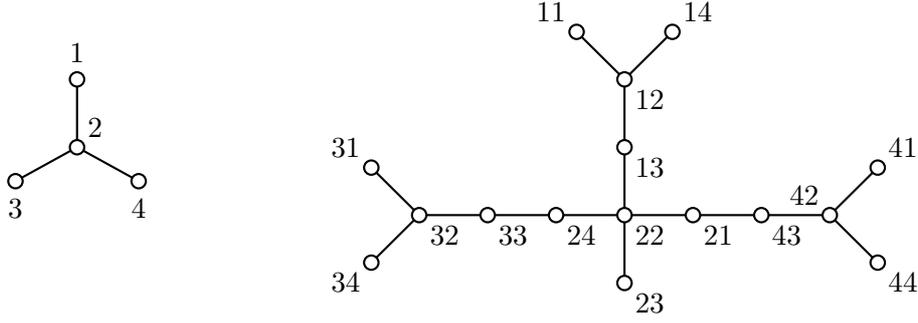
\begin{figure}[htb!]
\begin{center}
\begin{tikzpicture}[style=thick,scale=0.9,x=1.0cm,y=1.0cm]
	\def\vr{3pt}
	\begin{small}
	\draw (-4,3)--(-4,2);
	\draw (-4,2)--(-4.9,1.5);
	\draw (-4,2)-- (-3.1,1.5);
	\draw [fill=white] (-4,3) circle (\vr);
		\draw (-4,3.1) node [anchor = south] {1};
	\draw [fill=white] (-4,2) circle (\vr);
		\draw (-4,2) node [anchor = south west] {2};
	\draw [fill=white] (-4.9,1.5) circle (\vr);
		\draw (-4.9,1.4) node [anchor = north] {3};
	\draw [fill=white] (-3.1,1.5) circle (\vr);
		\draw (-3.1,1.4) node [anchor = north] {4};
%
%
	\draw (1,1)--(2,1)--(3,1)--(4,1)--(4,2)--(4,3);
	\draw (4,0)--(4,1)--(5,1)--(6,1)--(7,1);
	\draw (0.3,0.3)--(1,1)--(0.3,1.7);
	\draw (3.3,3.7)--(4,3)--(4.7,3.7);
	\draw (7.7,1.7)--(7,1)--(7.7,0.3);
	\draw [fill=white] (4,1) circle (\vr) node [anchor = north west] {22};
	\draw [fill=white] (4,2) circle (\vr) node [anchor = north west] {13};
	\draw [fill=white] (4,3) circle (\vr) node [anchor = north west] {12};
	\draw [fill=white] (3,1) circle (\vr) node [anchor = north west] {24};
	\draw [fill=white] (5,1) circle (\vr) node [anchor = north west] {21};
	\draw [fill=white] (4,0) circle (\vr) node [anchor = north west] {23};
	\draw [fill=white] (2,1) circle (\vr) node [anchor = north west] {33};
	\draw [fill=white] (1,1) circle (\vr) node [anchor = north west] {32};
	\draw [fill=white] (6,1) circle (\vr) node [anchor = north west] {43};
	\draw [fill=white] (7,1) circle (\vr) node [anchor = south east] {42};
	\draw [fill=white] (0.3,1.7) circle (\vr) node [anchor = south east] {31};
	\draw [fill=white] (0.3,0.3) circle (\vr) node [anchor = north east] {34};
	\draw [fill=white] (7.7,0.3) circle (\vr) node [anchor = north west] {44};
	\draw [fill=white] (7.7,1.7) circle (\vr) node [anchor = south west] {41};
	\draw [fill=white] (4.7,3.7) circle (\vr) node [anchor = south west] {14};
	\draw [fill=white] (3.3,3.7) circle (\vr) node [anchor = south east] {11};
	\end{small}
\end{tikzpicture}
\caption{Graph $G=K_{1,3}$ and the Sierpi\'nski product $G \otimes_f G$ with respect to 
$f=(1 \ 2 \ 3 \ 4)$.} 
\label{fig:ex2}
\end{center}
\end{figure}
\end{example}

\begin{example}
Let $G=C_4$  with $V(G)=\{1,2,3,4\}$ and let $H$ be a star, with edge set $\{\{1,2\},\{2,3\},\{2,4\}\}$.
Let $f:V(G) \to V(H)$ map $1 \mapsto 2$, $2 \mapsto 2$, $3 \mapsto 4$ and $4 \mapsto 3$. 
Note that the mapping $f$ is neither injective nor surjective.
If $\alpha =(1 \ 2)(3 \ 4)$ and $\beta=( 3\ 4)$, then $f \circ \alpha = \beta \circ f$ and 
$$
\Psi(\alpha,\beta)=(11 \ 21)(12 \ 22) (13 \ 24) (14 \ 23)(31 \ 41)(32 \ 42)(33 \ 44)(34 \ 43)
$$ is a reflection automorphism of $G \otimes_f H$, swapping 
copies $1H,2H$ and $3H,4H$, see Figure \ref{fig:ex3}.

\begin{figure}[htb!]
\begin{center}
\begin{tikzpicture}[style=thick,scale=0.6,x=1.0cm,y=1.0cm]
	\def\vr{3pt/0.65}
	\begin{small}
	\draw (10.5,9.5)--(10.5,7.5);
	\draw (10.5,7.5)--(8.8,6.5);
	\draw (10.5,7.5)--(12.2,6.5);
	\draw [fill=white] (10.5,9.5) circle (\vr);
		\draw (10.5,9.7) node[anchor=south] {1};
	\draw [fill=white] (10.5,7.5) circle (\vr) node[anchor=south west] {2};
	\draw [fill=white] (8.8,6.5) circle (\vr) node[anchor=north east] {3};
	\draw [fill=white] (12.2,6.5) circle (\vr) node[anchor=north west] {4};
	\draw (9,3)--(9,0);
	\draw (9,0)--(12,0);
	\draw (12,0)--(12,3);
	\draw (12,3)--(9,3);
	\draw [fill=white] (9,3) circle (\vr) node[anchor=south east] {4};
	\draw [fill=white] (9,0) circle (\vr) node[anchor=north east] {1};
	\draw [fill=white] (12,0) circle (\vr) node[anchor=north west] {2};
	\draw [fill=white] (12,3) circle (\vr) node[anchor=south west] {3};
%
%
	\draw (25,7)--(23,9)--(20.17,9)--(18.17,7)--(18.17,4.17)
					--(20.17,2.17)--(23,2.17)--(25,4.17)--cycle;
	\draw (20.48,-0.15)--(20.17,2.17)--(18.31,0.75);
	\draw (15.85,6.69)--(18.17,7)--(16.75,8.86);
	\draw (26.42,8.86)--(25,7)--(27.32,6.69);
	\draw (24.86,0.75)--(23,2.17)--(22.69,-0.15);
	\draw [fill=white] (25,7) circle (\vr) node [anchor = north east] {32};
	\draw [fill=white] (23,9) circle (\vr);
	\draw (23.2,8.8) node [anchor = north east] {33};
	\draw [fill=white] (20.17,9) circle (\vr);
	\draw (20,8.8) node [anchor = north west] {44};
	\draw [fill=white] (18.17,7) circle (\vr) node [anchor = north west] {42};
	\draw [fill=white] (18.17,4.17) circle (\vr) node [anchor = south west] {13};
	\draw [fill=white] (20.17,2.17) circle (\vr);
	\draw (20,2.37) node [anchor = south west] {12};
	\draw [fill=white] (23,2.17) circle (\vr);
	\draw (23.2,2.37) node [anchor = south east] {22};
	\draw [fill=white] (25,4.17) circle (\vr) node [anchor = south east] {24};
	\draw [fill=white] (20.48,-0.15) circle (\vr);
	\draw [fill=white] (18.31,0.75) circle (\vr);
	\draw [fill=white] (24.86,0.75) circle (\vr);
	\draw [fill=white] (26.42,8.86) circle (\vr);
	\draw [fill=white] (15.85,6.69) circle (\vr);
	\draw [fill=white] (16.75,8.86) circle (\vr);
	\draw [fill=white] (27.32,6.69) circle (\vr);
	\draw [fill=white] (22.69,-0.15) circle (\vr);
	\draw (17.6,0.6) node[anchor=north west] {11};
	\draw (19.8,-0.4) node[anchor=north west] {14};
	\draw (25,0.6) node[anchor=north west] {21};
	\draw (22.5,-0.4) node[anchor=north west] {23};
	\draw (27.2,6.5) node[anchor=north west] {31};
	\draw (26.6,9.4) node[anchor=north west] {34};
	\draw (15.1,6.5) node[anchor=north west] {41};
	\draw (15.5,9.4) node[anchor=north west] {43};
	\end{small}
\end{tikzpicture}
\caption{Graphs $C_4$, $K_{1,3}$ and their Sierpi\'nski product  with respect to 
$f:V(C_4) \to V(K_{1,3})$, $f: 1 \mapsto 2, 2 \mapsto 2, 3 \mapsto 4, 4 \mapsto 3$.} 
 \label{fig:ex3}
\end{center}
\end{figure}
\end{example}


Now let us introduce the second family of automorphisms.
Let   $g \in V(G)$ and $\beta \in \Aut(H)$.
Define a mapping $\Phi(g,\beta): V(G \otimes_f H) \to V(G \otimes_f H)$ given by
$$
\Phi(g,\beta): (g_1,h_1) \mapsto  \begin{cases}
                   (g_1,h_1)  & \mbox{if} \ \  g_1 \ne g,\\
                   (g_1,\beta(h_1))  & \mbox{if} \ \ g_1=g.
                 \end{cases}  
$$

\begin{proposition}
\label{prop:auto21}
The mapping $\Phi(g,\beta)$ is an automorphism of $G \otimes_f H$
if and only if $\beta$ is in the stabilizer of $f(N(g))$.
Moreover, in this case $\Phi(g,\beta)$ respects the fundamental edge partition.
\end{proposition}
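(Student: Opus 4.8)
The plan is to verify directly, one edge class at a time, that $\Phi(g,\beta)$ preserves adjacency, reading off the condition on $\beta$ from the single class that causes trouble. First I would record that $\Phi(g,\beta)$ is automatically a bijection of $V(G\otimes_f H)$: it is the identity on each copy $g_1H$ with $g_1\ne g$ and acts as the permutation $\beta$ on $gH$, so it is a bijection on every copy and hence on the whole vertex set. Since the graph is finite, to show that a vertex-bijection is an automorphism it suffices to show it maps edges into edges: such a map induces an injection of the finite edge set into itself, which is therefore a bijection, so adjacency is preserved in both directions. Conversely, an automorphism certainly sends edges to edges. Thus the whole question reduces to determining for which $\beta$ the map $\Phi(g,\beta)$ sends every edge to an edge.

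I would then run through the fundamental edge partition. Inner edges never cause a problem: one inside a copy $g_1H$ with $g_1\ne g$ is fixed, while an inner edge $\{(g,h),(g,h')\}$ of $gH$ is sent to $\{(g,\beta(h)),(g,\beta(h'))\}$, again an inner edge since $\beta\in\Aut(H)$. A connecting edge $\{(g_1,f(g_2)),(g_2,f(g_1))\}$ with $g\notin\{g_1,g_2\}$ is fixed pointwise, hence preserved. The only delicate class is the connecting edges incident to $gH$, namely $\{(g,f(g')),(g',f(g))\}$ for $g'\in N(g)$: here the external endpoint $(g',f(g))$ is fixed, while $(g,f(g'))$ is sent to $(g,\beta(f(g')))$.

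This is the crux. The image $\{(g,\beta(f(g'))),(g',f(g))\}$ joins the two distinct copies $gH$ and $g'H$, so it could only be a connecting edge; but by Lemma~\ref{lem:atmostone}(i) the sole edge between $gH$ and $g'H$ is $\{(g,f(g')),(g',f(g))\}$. Hence the image is an edge if and only if $\beta(f(g'))=f(g')$. Ranging over $g'\in N(g)$, I conclude that $\Phi(g,\beta)$ maps edges into edges precisely when $\beta$ fixes every vertex of $f(N(g))$, that is, when $\beta$ lies in the stabilizer of $f(N(g))$; together with the finiteness remark this yields the biconditional. The ``moreover'' clause then drops out of the same case analysis, since under this condition every inner edge goes to an inner edge and every connecting edge goes to a connecting edge (those incident to $gH$ to themselves, the rest fixed), so the fundamental edge partition is respected. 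The one real subtlety I would flag is that holding every external copy rigidly fixed forces $\beta$ to fix the attachment points $f(g')$ individually rather than merely setwise, and it is precisely Lemma~\ref{lem:atmostone}(i) that forbids $\beta$ from permuting $f(N(g))$ nontrivially.
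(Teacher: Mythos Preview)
Your proof is correct and follows essentially the same approach as the paper's: both establish bijectivity, observe that inner edges and connecting edges disjoint from $gH$ are preserved automatically, and then isolate the connecting edges incident with $gH$ as the only place where a condition on $\beta$ arises. Your version is in fact slightly more careful than the paper's, since you explicitly invoke Lemma~\ref{lem:atmostone}(i) to justify why $\{(g,\beta(f(g'))),(g',f(g))\}$ being an edge forces $\beta(f(g'))=f(g')$, whereas the paper simply asserts this equivalence.
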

\begin{proof}
The mapping $\Phi(g,\beta)$ is obviously a bijection since it
fixes all the vertices of $G \otimes_f H$ not in $gH$ and it permutes the vertices in $gH$.
It also fixes inner edges  and  connecting edges that do 
not have any endvertex in $gH$ and it permutes inner edges in $gH$.

Take a vertex $g' \in N(G)$.
Then $\{(g,f(g')),(g',f(g))\}$ is a connecting edge.
The mapping  $\Phi(g,\beta)$ maps $\{(g,f(g')),(g',f(g))\}$ to
the set $\{(g,\beta(f(g')),(g',f(g))\}$, which is an edge if and only if
$\beta(f(g')=f(g')$.
So $\Phi(g,\beta)$ is an automorphism if and only if 
$\beta$ is in the stabilizer of $f(g')$ for every $g' \in N(G)$.
\end{proof}

\begin{remark}   \label{rem:specialcase}
Note that by Theorem \ref{thm:autoform}, a mapping $\Phi(g,\beta)$
is the same as $\Psi(\alpha, \B)$ for some $\alpha \in \Aut(G)$ and
$\B: V(G) \to \Aut(H)$. Indeed, it is easy to verify that for
$\alpha = {\rm id}$  and 
$\B$ defined by the rules $\B: g_1 \to  {\rm id}$ if $g_1 \ne g$ and
$\B: g \to  \beta$, we have
$\Phi(g,\beta)=\Psi(\alpha, \B)$.
\end{remark}

Given a group $X$ acting on set $Y$, we denote by $X_Y$ the stabilizer
of $Y$, i.e., the subgroup of $X$ that fixes every element of $Y$.
For $g \in G$ denote by $\hat B_g(G,H,f)$ the group generated by 
$\{\Phi(g,\beta_g)| \, \beta_{g}\in \Aut(H)_{f(N(g))}\}$.
Denote by $\hat B(G,H,f)$ the group generated by 
$\{\hat B_g(G,H,f)| \, \ g \in V(G)\}$.

\begin{proposition}
\label{prop:auto22}
Let $g,g^\prime$ be distinct vertices of $G$ and let 
$\beta_g \in   \Aut(H)_{f(N(g))}$,  $\beta_{g^\prime}\in \Aut(H)_{f(N(g^\prime))}$.
Then  $\Phi(g,\beta_g)$ and  $\Phi(g',\beta_{g'})$ commute.
\end{proposition}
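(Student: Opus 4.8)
The plan is to exploit the fact that the two maps act nontrivially on disjoint parts of $G \otimes_f H$, so they commute for the elementary reason that permutations with disjoint support always commute. By the definition of $\Phi$, the mapping $\Phi(g,\beta_g)$ fixes every vertex $(g_1,h_1)$ with $g_1 \ne g$ and permutes only the vertices of the copy $gH$; likewise $\Phi(g',\beta_{g'})$ fixes every vertex outside $g'H$ and permutes only the vertices of $g'H$. Since $g \ne g'$, the vertex sets $V(gH)=\{(g,h)\mid h \in V(H)\}$ and $V(g'H)=\{(g',h)\mid h \in V(H)\}$ are disjoint, so the supports of the two permutations are disjoint.

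First I would verify the identity $\Phi(g,\beta_g)\,\Phi(g',\beta_{g'}) = \Phi(g',\beta_{g'})\,\Phi(g,\beta_g)$ vertex by vertex, splitting into three cases according to the first coordinate of an arbitrary vertex $(g_1,h_1)$. If $g_1 \notin \{g,g'\}$, then both maps fix $(g_1,h_1)$, so the two composites agree. If $g_1=g$, then $\Phi(g',\beta_{g'})$ fixes $(g,h_1)$ because $g \ne g'$, while $\Phi(g,\beta_g)$ sends it to $(g,\beta_g(h_1))$, which is again fixed by $\Phi(g',\beta_{g'})$; hence both orders of composition yield $(g,\beta_g(h_1))$. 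The case $g_1=g'$ is symmetric, both composites returning $(g',\beta_{g'}(h_1))$. As the two composites agree on every vertex, they are equal.

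There is no genuine obstacle here: the essential content is the disjointness of the supports $V(gH)$ and $V(g'H)$, and the remaining verification is a routine three-way case check. The only point worth flagging is the role of the hypotheses $\beta_g \in \Aut(H)_{f(N(g))}$ and $\beta_{g'} \in \Aut(H)_{f(N(g'))}$: by Proposition \ref{prop:auto21} these are exactly the conditions ensuring that each $\Phi(g,\beta_g)$ and $\Phi(g',\beta_{g'})$ is itself an automorphism of $G \otimes_f H$, so that the statement is about a genuine pair of group elements; they are not otherwise needed for the commuting argument, which would hold for arbitrary $\beta_g,\beta_{g'} \in \Aut(H)$.
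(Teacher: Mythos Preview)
Your proposal is correct and follows exactly the paper's own approach: the paper's proof is the single sentence that $\Phi(g,\beta_g)$ and $\Phi(g',\beta_{g'})$ commute because as permutations they have disjoint supports. Your three-case vertex-by-vertex verification simply spells this out in more detail, and your remark that the stabilizer hypotheses serve only to make the maps automorphisms (via Proposition~\ref{prop:auto21}) rather than being needed for commutativity is accurate.
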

\begin{proof}
Mappings $\Phi(g,\beta_g)$ and  $\Phi(g',\beta_{g'})$ commute
since as permutations they have disjoint supports.
\end{proof}

\begin{theorem}
Group $\hat B(G,H,f)$ is a subgroup of  group $\tilde A(G,H,f)$ and
is  a direct product 
\begin{equation}    \label{eq_Bprod}
\hat B(G,H,f)=\prod_{g \in V(G)} \hat B_g(G,H,f).
\end{equation}
Moreover, the group $\hat B(G,H,f)$ is isomorphic to the group 
$\prod_{g \in V(G)} \Aut(H)_{f(N(g))}.$
\end{theorem}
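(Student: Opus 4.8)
The plan is to establish the three assertions in turn, the whole argument resting on the observation that for distinct vertices $g$ the automorphisms in $\hat B_g(G,H,f)$ move only vertices of the copy $gH$. For the containment in $\tilde A(G,H,f)$, I would note that by Proposition \ref{prop:auto21} each generator $\Phi(g,\beta_g)$ with $\beta_g \in \Aut(H)_{f(N(g))}$ is an automorphism that respects the fundamental edge partition. Since $\tilde A(G,H,f)$ is a subgroup of $\Aut(G \otimes_f H)$ and contains all these generators, the subgroup $\hat B(G,H,f)$ that they generate is contained in $\tilde A(G,H,f)$.

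For the direct product decomposition the key tool is the support of a permutation. By definition $\Phi(g,\beta_g)$ fixes every vertex outside $V(gH)$, so every element of $\hat B_g(G,H,f)$ --- being a product of such generators and their inverses --- has support contained in $V(gH)$. For distinct $g,g'$ the sets $V(gH)$ and $V(g'H)$ are disjoint; this already yields that the factors commute elementwise (as recorded in Proposition \ref{prop:auto22}) and hence that each $\hat B_g(G,H,f)$ is normal in $\hat B(G,H,f)$. The subgroups generate $\hat B(G,H,f)$ by definition, and to confirm the internal direct product I would check the trivial-intersection condition: any element of $\hat B_g(G,H,f) \cap \langle \hat B_{g'}(G,H,f) : g' \neq g\rangle$ has support inside $V(gH)$ and simultaneously inside $\bigcup_{g' \neq g} V(g'H)$, so its support is empty and it is the identity. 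This gives $\hat B(G,H,f) = \prod_{g \in V(G)} \hat B_g(G,H,f)$; equivalently, the disjointness of supports shows directly that the representation of an element as a product $\prod_g \phi_g$ with $\phi_g \in \hat B_g(G,H,f)$ is unique, since restricting to $V(gH)$ recovers $\phi_g$.

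Finally, for the isomorphism type I would fix $g$ and consider the map $\beta \mapsto \Phi(g,\beta)$ from $\Aut(H)_{f(N(g))}$ onto $\hat B_g(G,H,f)$. A direct computation on the $H$-coordinate inside $gH$ gives $\Phi(g,\beta)\,\Phi(g,\beta') = \Phi(g,\beta\beta')$, so this map is a homomorphism; it is surjective by construction and injective because $\Phi(g,\beta) = \mathrm{id}$ forces $\beta(h) = h$ for all $h \in V(H)$. Hence $\hat B_g(G,H,f) \cong \Aut(H)_{f(N(g))}$, and combining this with the direct product decomposition yields $\hat B(G,H,f) \cong \prod_{g \in V(G)} \Aut(H)_{f(N(g))}$. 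No step is genuinely difficult; the only point requiring a little care is the trivial-intersection verification, which is where I would be most explicit.
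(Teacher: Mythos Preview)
Your proposal is correct and follows essentially the same approach as the paper: containment via Proposition~\ref{prop:auto21}, the direct product via disjoint supports (Proposition~\ref{prop:auto22}), and the isomorphism via $\beta \mapsto \Phi(g,\beta)$. Your treatment is in fact more careful than the paper's on one point: the paper only invokes \emph{pairwise} trivial intersection of the $\hat B_g(G,H,f)$, whereas you correctly verify the stronger condition $\hat B_g(G,H,f) \cap \langle \hat B_{g'}(G,H,f) : g' \neq g\rangle = \{\mathrm{id}\}$ needed for an internal direct product, deducing it from the disjointness of supports.
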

\begin{proof}
Group $\hat B(G,H,f)$ is a subgroup of $\tilde A(G,H,f)$ by
the definition and Propositon \ref{prop:auto21}.
Since the groups $\hat B_g(G,H,f)$, $g \in V(G)$, have pairwise 
only the identity in common, they generate $\hat B(G,H,f)$, 
and the elements of two distinct groups commute,  equation \eqref{eq_Bprod} holds.
The last claim is true since for every $g \in G$ the groups
$\hat B_g(G,H,f)$ and $\Aut(H)_{f(N(g))}$ are isomorphic in the obvious way.
\end{proof}


\subsection{When do all the automorphisms respect the fundamental edge partition}

Given connected graphs $G,H$ and a mapping $f:V(G) \to V(H)$,
in general there can exist automorphisms of $G \otimes_f H$ that do not respect 
the fundamental edge partition. Figure \ref{fig:counterexample}  shows such an example.
There $G=C_4$, $H=2K_3+e$ and $f: V(G) \to V(H)$ is the identity function on its domain.
One can easily observe that   cyclic rotation of $G \otimes_f H$  maps  inner edge 
$\{16,15\}$ to  connecting edge $\{14,41\}$. 

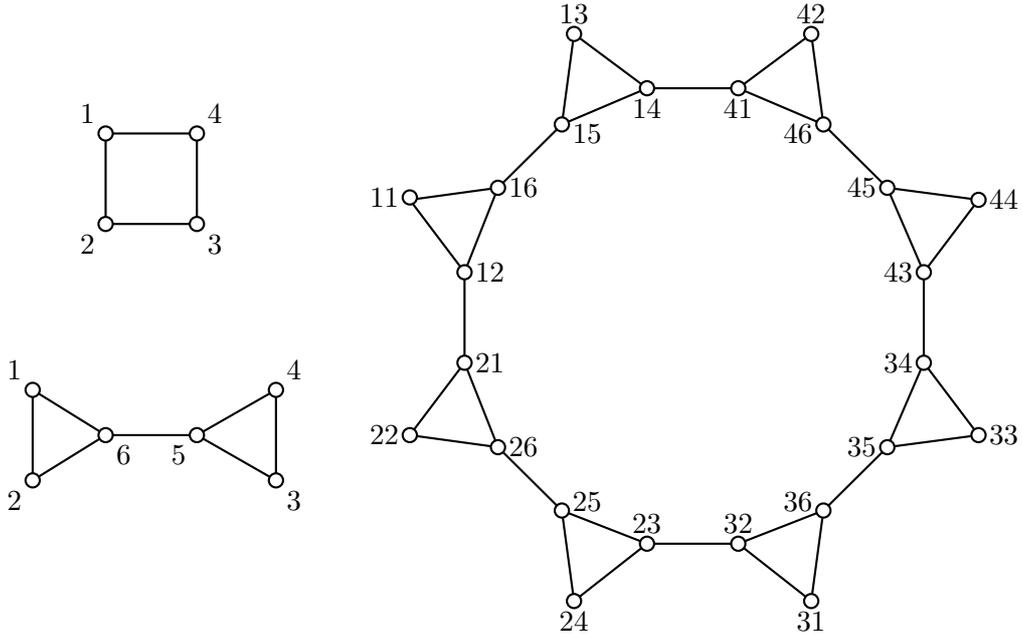
\begin{figure}
\centering
\begin{tikzpicture}[scale=0.4,style=thick]
	\def\vr{3pt/0.45}
	\begin{small}
	\draw (7.8,1.9)--(10.8,1.9)--(13.6,3)--(15.7,5.1)--(16.9,7.9)--(16.9,10.9)
			--(15.7,13.7)--(13.6,15.8)--(10.8,17)--(7.8,17)--(5,15.8)--(2.9,13.7)
			--(1.8,10.9)--(1.8,7.9)--(2.9,5.1)--(5,3)--cycle;
	\draw (1.8,10.9)--(0,13.3)--(2.9,13.7);
	\draw (5,15.8)--(5.4,18.8)--(7.8,17);
	\draw [fill=white] (7.8,17) circle (\vr) node[anchor=north] {14};
	\draw [fill=white] (5,15.8) circle (\vr);
		\draw (5,15.5) node[anchor=west] {15};
	\draw [fill=white] (2.9,13.7) circle (\vr) node[anchor=west] {16};
	\draw [fill=white] (1.8,10.9) circle (\vr) node[anchor=west] {12};
	\draw [fill=white] (0,13.38) circle (\vr) node[anchor=east] {11};
	\draw [fill=white] (5.4,18.8) circle (\vr) node[anchor=south] {13};
	\draw (7.8,1.9)--(5.4,0)--(5,3); 
	\draw (2.9,5.1)--(0,5.5)--(1.8,7.9);
	\draw [fill=white] (1.8,7.9) circle (\vr) node[anchor=west] {21};
	\draw [fill=white] (2.9,5.1) circle (\vr) node[anchor=west] {26};
	\draw [fill=white] (5,3) circle (\vr);
		\draw (5,3.3) node[anchor=west] {25};
	\draw [fill=white] (7.8,1.9) circle (\vr) node[anchor=south] {23};
	\draw [fill=white] (5.4,0) circle (\vr) node[anchor=north] {24};
	\draw [fill=white] (0,5.5) circle (\vr) node[anchor=east] {22};
	\draw (16.9,7.9)--(18.7,5.5)--(15.7,5.1);
	\draw (13.6,3)--(13.2,0)--(10.8,1.9);
	\draw [fill=white] (10.8,1.9) circle (\vr) node[anchor=south] {32};
	\draw [fill=white] (13.6,3) circle (\vr);
		\draw (13.6,3.3) node[anchor=east] {36};
	\draw [fill=white] (15.7,5.1) circle (\vr) node[anchor=east] {35};
	\draw [fill=white] (16.9,7.9) circle (\vr) node[anchor=east] {34};
	\draw [fill=white] (18.7,5.5) circle (\vr) node[anchor=west] {33};
	\draw [fill=white] (13.2,0) circle (\vr) node[anchor=north] {31};
	\draw (10.8,17)--(13.2,18.8)--(13.6,15.8);
	\draw (15.7,13.7)--(18.7,13.3)--(16.9,10.9);
	\draw [fill=white] (16.9,10.9) circle (\vr) node[anchor=east] {43};
	\draw [fill=white] (15.7,13.7) circle (\vr) node[anchor=east] {45};
	\draw [fill=white] (13.6,15.8) circle (\vr);
		\draw (13.6,15.5) node[anchor=east] {46};
	\draw [fill=white] (10.8,17) circle (\vr) node[anchor=north] {41};
	\draw [fill=white] (13.2,18.8) circle (\vr) node[anchor=south] {42};
	\draw [fill=white] (18.7,13.3) circle (\vr) node[anchor=west] {44};
%
%
	\draw (-7,15.5)--(-10,15.5)--(-10,12.5)--(-7,12.5)--cycle;
	\draw [fill=white] (-10,15.5) circle (\vr) node[anchor=south east] {1};
	\draw [fill=white] (-10,12.5) circle (\vr) node[anchor=north east] {2};
	\draw [fill=white] (-7,12.5) circle (\vr) node[anchor=north west] {3};
	\draw [fill=white] (-7,15.5) circle (\vr) node[anchor=south west] {4};
	\draw (-10,5.5)--(-7,5.5);
	\draw (-7,5.5)--(-4.4,7)--(-4.4,4)--cycle;
	\draw (-10,5.5)--(-12.4,7)--(-12.4,4)--cycle;
	\draw [fill=white] (-10,5.5) circle (\vr) node[anchor=north west] {6};
	\draw [fill=white] (-7,5.5) circle (\vr) node[anchor=north east] {5};
	\draw [fill=white] (-4.4,7) circle (\vr) node[anchor=south west] {4};
	\draw [fill=white] (-4.4,4) circle (\vr) node[anchor=north west] {3};
	\draw [fill=white] (-12.4,7) circle (\vr) node[anchor=south east] {1};
	\draw [fill=white] (-12.4,4) circle (\vr) node[anchor=north east] {2};
	\end{small}
\end{tikzpicture}
\caption{Graphs $C_4$, $2K_3+e$ and their Sierpi\'nski product with respect to $f = {\rm id}$.}
\label{fig:counterexample}
\end{figure}

Note that in the example above, graph $H$ is not 2-connected.
When  graphs $G,H$ are both $2$-connected,
we have so far not been able to find an automorphism of
$G \otimes_f H$ that does not respect the fundamental edge partition.
Therefore we propose the following Conjecture.

\begin{conjecture}
Let $G,H$ be $2$-connected  graphs and let $f: V(G)  \to V(H)$ be any mapping.
Then
$$\tilde A(G,H,f)= \Aut(G \otimes_f H).$$
\end{conjecture}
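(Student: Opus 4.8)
The plan is to reduce the conjecture to a single structural statement: every automorphism $\sigma$ of $K = G \otimes_f H$ permutes the copies $\{gH : g \in V(G)\}$ as a family of vertex sets. This reduction is clean. The sets $V(gH)$ are pairwise disjoint and cover $V(K)$, an edge is an inner edge precisely when both of its endpoints lie in a single $V(gH)$, and by Lemma \ref{lem:atmostone}(i) there is at most one edge of $K$ between any two copies, so the edges running between distinct copies are exactly the connecting edges. Hence, once we know that $\sigma$ maps each $V(gH)$ onto some $V(g'H)$, inner edges are forced to inner edges and connecting edges to connecting edges; that is, $\sigma \in \tilde A(G,H,f)$, and therefore $\tilde A(G,H,f) = \Aut(K)$. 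So the whole problem is to show that the family of copies is automorphism-invariant.

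To do this I would look for an intrinsic, isomorphism-invariant description of the copies that exploits $2$-connectivity of both factors. The basic observations are local. Since $H$ is $2$-connected, every inner edge lies on a cycle contained in its own copy, in particular on a short cycle using no connecting edge. By contrast, a connecting edge $\{u,v\} = \{(g,f(g')),(g',f(g))\}$ has $N(u)\setminus\{v\}\subseteq V(gH)$ and $N(v)\setminus\{u\}\subseteq V(g'H)$, which are disjoint; combined with Lemma \ref{lem:atmostone}(i) this shows that a connecting edge lies in no triangle and no $4$-cycle, and more generally that every cycle through a connecting edge must leave $gH$ and return, hence must use at least two connecting edges and pass through further copies. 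This suggests the characterisation: the set $C$ of connecting edges is the unique canonical edge set whose deletion splits $K$ into $2$-connected pieces of order $\ge 3$ with at most one edge of $C$ between any two pieces, and the copies are exactly those pieces. If $C$ can be pinned down by such a canonical extremal property, then $\Aut(K)$ preserves $C$ and permutes the pieces, which is what we need. When $H$ is in addition $3$-connected, this programme can be made rigorous through the Tutte (SPQR) decomposition of the $2$-connected graph $K$: that decomposition is unique and hence $\Aut(K)$-invariant, the attachment sets $\{(g,f(g')) : g'\in N(g)\}$ supply the separating pairs, and the copies appear as the rigid, $3$-connected nodes, which an automorphism must permute.

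The hard part, and the reason this is only a conjecture, is the case in which $H$ is $2$-connected but not $3$-connected, the paradigm being $H = C_n$. Then each copy is merely an $n$-cycle, an inner edge of such a copy is not individually removable without destroying the $2$-connectivity of that copy, and neither inner nor connecting edges are bridges of $K$ (the latter because $G$ is $2$-connected). Consequently the copy boundaries are no longer visible from connectivity alone: in the Tutte decomposition a chain of cycles glued along single edges collapses into long series ($S$-)nodes that merge several copies, so the SPQR tree by itself does not recover the family $\{gH\}$, and a rotation-type map trading an inner edge of one cycle for a connecting edge is not excluded on purely local grounds. To close the argument one would have to supplement the connectivity data with the combinatorics of $f$ and of the attachment pattern --- for instance by tracking, for each candidate cycle, how many of its vertices serve as attachment points and how these are distributed --- and use the interplay between the girths and the nested separation pairs of $G$ and $H$ to rule out such accidental symmetries. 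Producing a single invariant that does this uniformly over all $2$-connected $G$ and $H$ is the central obstacle.
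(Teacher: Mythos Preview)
This statement is presented in the paper as a \emph{conjecture}, not a theorem: the authors explicitly say they have not been able to find a counterexample and then prove only two restricted special cases (Proposition~\ref{lem:fep_trianglefree} when $G=H$ is regular and triangle-free with $f\in\Aut(G)$, and Proposition~\ref{lem:fep} when every edge of $H$ lies on a cycle shorter than the girth of $G$, or than twice the girth if $f$ is locally injective). There is thus no proof in the paper to compare your attempt against.

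Your proposal is likewise not a proof, and you say so yourself: you reduce the conjecture to the claim that the family $\{V(gH)\}$ is $\Aut(K)$-invariant, sketch how the Tutte/SPQR decomposition would deliver this when $H$ is $3$-connected, and then correctly identify the obstacle when $H$ is merely $2$-connected (a cycle), where the decomposition fuses several copies into a single polygon node and the argument collapses. So the honest status is that you have outlined a programme that would establish a \emph{different} special case from the ones the paper handles --- namely $H$ $3$-connected, with no constraint relating $\girth(G)$ to cycle lengths in $H$ --- but the full conjecture remains open in your write-up exactly as it does in the paper.

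It is worth noting the contrast in technique. The paper's partial results are obtained by elementary local invariants (vertex degrees in the regular triangle-free case; minimum cycle length through an edge in the girth case) that directly distinguish connecting edges from inner edges. Your approach is global and structural, going through uniqueness of the block/SPQR decomposition. Each covers ground the other does not: the paper's girth argument works for $H=C_n$ provided $\girth(G)$ is large enough, precisely the case your SPQR route cannot handle; conversely your $3$-connected-$H$ argument imposes no girth hypothesis on $G$. Neither approach, as it stands, settles the conjecture.
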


In this section we prove this conjecture for  two special cases.
In the first case $G=H$ and $G$ is a regular triangle-free graph.
In the second case  every edge of $H$ is contained in a short cycle.
Note that in these two cases the assumption that $G,H$ are
$2$-connected is not needed.

\begin{proposition}
	\label{lem:fep_trianglefree}
Let $G$ be a connected regular triangle-free graph and 
let $f: V(G)  \to V(G)$ be an automorphism of $G$.
Then every automorphism of $G \otimes_f G$ respects the fundamental edge partition.
\end{proposition}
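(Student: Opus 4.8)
The plan is to give an intrinsic, degree-based characterization of the connecting edges that any automorphism is forced to preserve. Write $d$ for the common valence of $G$ (so $G$ is $d$-regular) and set $K = G \otimes_f G$. Since $f$ is an automorphism it is in particular locally injective, so by Lemma~\ref{lem:atmostone}(ii) every vertex of $K$ lies on at most one connecting edge. On the other hand, by Lemma~\ref{lem:HGcopy}(i) each copy $gG$ is isomorphic to $G$ and hence $d$-regular, so every vertex $(g,h)$ has exactly $d$ inner neighbours. Consequently every vertex of $K$ has degree either $d$ (if it lies on no connecting edge) or $d+1$ (if it lies on one), and the vertices of degree $d+1$ are precisely the end-vertices of connecting edges.

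First I would record the easy direction: both end-vertices of a connecting edge have degree $d+1$. The crux is the complementary claim that an \emph{inner} edge can never join two vertices of degree $d+1$. Suppose $\{(g,h),(g,h')\}$ is an inner edge, so $\{h,h'\}\in E(G)$, and suppose both endpoints have degree $d+1$. Unwinding the definition, $(g,h)$ lies on a connecting edge exactly when $f^{-1}(h)\in N(g)$, and likewise for $(g,h')$; hence $f^{-1}(h), f^{-1}(h')\in N(g)$. Since $f$ is an automorphism and $\{h,h'\}\in E(G)$, we also have $\{f^{-1}(h),f^{-1}(h')\}\in E(G)$. But then $g$, $f^{-1}(h)$, $f^{-1}(h')$ are three pairwise-adjacent vertices of $G$, that is, a triangle, contradicting the triangle-freeness of $G$. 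So no inner edge has both endpoints of degree $d+1$.

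Combining the two observations yields the key characterization: the connecting edges of $K$ are exactly the edges both of whose endpoints have degree $d+1$. This property is purely combinatorial and therefore preserved by any automorphism $\tilde\gamma$ of $K$: since $\tilde\gamma$ preserves vertex degrees, it sends each connecting edge (two endpoints of degree $d+1$) to an edge with two endpoints of degree $d+1$, which must again be a connecting edge. As $\tilde\gamma$ permutes the finite set of connecting edges bijectively, it also permutes the complementary set of inner edges, and so $\tilde\gamma$ respects the fundamental edge partition.

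The step I expect to carry the whole argument is the triangle-freeness deduction in the second paragraph. It is the regularity of $G$ that turns the qualitative condition ``lies on a connecting edge'' into the clean numerical condition ``has degree $d+1$'', and it is precisely the fact that $f$ is an automorphism (and not merely injective) that lets me pull the edge $\{h,h'\}$ back to the edge $\{f^{-1}(h),f^{-1}(h')\}$ and manufacture the forbidden triangle. Once that claim is in place the remainder is routine degree bookkeeping; the only point worth double-checking is that both hypotheses are genuinely used, which is exactly what this step confirms.
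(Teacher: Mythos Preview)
Your proof is correct and follows essentially the same approach as the paper: use regularity and local injectivity of $f$ to show that connecting edges are precisely those whose endpoints both have degree $d+1$, and use triangle-freeness together with the fact that $f$ is an automorphism to rule out any inner edge having this property. The only cosmetic difference is that you work directly with $f^{-1}(h)$ and $f^{-1}(h')$, whereas the paper names these vertices $g_1'$ and $g_2'$; the triangle you produce is the same one.
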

\begin{proof}
Let $k$ denote the valency of $G$. Then the endvertices of
every connecting edge in $G \otimes G_f$ have valency $k+1$
by Lemma~\ref{lem:atmostone} (ii).
An endvertex of an inner edge may have valency $k$ or $k+1$.
Clearly, if at least one endvertex of an inner edge has  valency $k$,
this edge cannot be mapped to a connecting edge by any automorphism.

Suppose now that both endvertices of an inner edge $\{(g,g_1),(g,g_2)\}$ have
degree $k+1$. This is only possible if $(g,g_1)$ and $(g,g_2)$
are endvertices of some connecting edges, say
$\{(g,g_1),(g_1',f(g))\}$ and  $\{(g,g_2),(g_2',f(g))\}$ where
$g_1=f(g_1')$ and $g_2=f(g_2')$.
But then $g_1'$ and $g_2'$ are adjacent to $g$ in $G$.
Since $g_1$ and $g_2$ are adjacent in $G$ and $f$ is an automorphism,
also $g_1'$ and $g_2'$ are adjacent.
But then $g,g_1'g_2'$ form a triangle in $G$, a contradiction.
Therefore no inner edge can be mapped to a connecting edge,
so  every automorphism of $G \otimes_f G$ respects the fundamental edge partition.
\end{proof}

\begin{lemma}
	\label{lem:shortcycles}
Let $G$ and $H$ be  graphs and let $f: V(G)  \to V(H)$ be any mapping.
Let $\{g,g'\}$ be an edge of $G$.
\begin{itemize}
\item[(i)] If $\{g,g'\}$ is not contained in any cycle of $G$, then the edge
             $\{(g,f(g'),(g',f(g))\}$ is not contained in any cycle of  $G \otimes_f H$.
\item[(ii)] Let $c$ be the length of the shortest cycle that contains $\{g,g'\}$.
              Then the  shortest cycle that contains the edge $\{(g,f(g'),(g',f(g))\}$ 
              in $G \otimes_f H$ has length at least $c$.
 \item[(iii)] Suppose  that $f$ is  locally injective and let $c$ be the length 
              of the shortest cycle that contains $\{g,g'\}$.
              Then the  shortest cycle that contains the edge $\{(g,f(g'),(g',f(g))\}$ 
              in $G \otimes_f H$ has length at least $2c$.
\end{itemize}
\end{lemma}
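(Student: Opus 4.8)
The plan is to exploit the natural projection $\pi \colon V(G \otimes_f H) \to V(G)$ given by $\pi(g,h)=g$. Under $\pi$ both endpoints of an inner edge are sent to the same vertex of $G$, whereas a connecting edge $\{(g,f(g')),(g',f(g))\}$ is sent to the edge $\{g,g'\}$. The crucial structural input is Lemma~\ref{lem:atmostone}: part~(i) says that distinct connecting edges lie above distinct edges of $G$, so $\pi$ carries the connecting edges of $G\otimes_f H$ injectively onto edges of $G$; part~(ii), under local injectivity, says that no vertex meets two connecting edges. I would treat all three statements uniformly by projecting cycles of $G\otimes_f H$ to closed walks of $G$.

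First I would fix a cycle $C$ in $G\otimes_f H$ that contains the connecting edge $e=\{(g,f(g')),(g',f(g))\}$ and set $W=\pi(C)$, a closed walk in $G$. If $m$ denotes the number of connecting edges of $C$, then the inner edges of $C$ contribute nothing to $W$, so $W$ has exactly $m$ edge-steps (counted with multiplicity), while the length of $C$ is $\ell = m + (\text{number of inner edges of }C)\ge m$. Since $C$ is a simple cycle it uses $e$ exactly once, and by Lemma~\ref{lem:atmostone}(i) no other edge of $C$ projects onto $\{g,g'\}$; hence $W$ traverses $\{g,g'\}$ exactly once. This bookkeeping step is the heart of the argument.

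With this in hand the three estimates follow quickly. For~(i), if $\{g,g'\}$ lies in no cycle of $G$ then it is a bridge, so deleting it splits $G$ into the components of $g$ and of $g'$; any closed walk crosses between them an even number of times, i.e. uses $\{g,g'\}$ an even number of times, contradicting that $W$ uses it exactly once, so no such $C$ exists. For~(ii), I would decompose the edge multiset of $W$ (in which every vertex has even degree) into edge-disjoint simple cycles; since $\{g,g'\}$ occurs exactly once, it lies in precisely one such cycle $D$, which is a simple cycle of $G$ through $\{g,g'\}$ and hence has $|D|\ge c$. Because $D$ is part of $W$ we get $m\ge |D|\ge c$, and therefore $\ell\ge m\ge c$.

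For~(iii) I would add the local-injectivity hypothesis and invoke Lemma~\ref{lem:atmostone}(ii): every vertex of $G\otimes_f H$ meets at most one connecting edge, so in the cyclic edge sequence of $C$ no two connecting edges are consecutive. Consequently each of the $m$ cyclic gaps between consecutive connecting edges contains at least one inner edge, giving at least $m$ inner edges and hence $\ell\ge 2m$; combined with $m\ge c$ from the cycle-decomposition step this yields $\ell\ge 2m\ge 2c$. I expect the only delicate point to be the passage from ``the simple cycle $C$ uses $e$ once'' to ``the closed walk $W$ uses $\{g,g'\}$ once,'' which is precisely where Lemma~\ref{lem:atmostone}(i) is indispensable; once that is secured, the three length bounds are immediate.
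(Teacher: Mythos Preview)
Your proof is correct and follows essentially the same projection idea as the paper: list the connecting edges of a cycle $C$ through $e$ in cyclic order, project their endpoints to $G$ to obtain a closed walk containing $\{g,g'\}$, and then read off the bounds (with Lemma~\ref{lem:atmostone}(ii) forcing an inner edge between consecutive connecting edges for part~(iii)). Your version is in fact more careful than the paper's, which asserts that the projected sequence is a \emph{cycle} in $G$ rather than merely a closed walk; your explicit use of Lemma~\ref{lem:atmostone}(i) to ensure $\{g,g'\}$ is traversed exactly once, followed by the Euler decomposition to extract a genuine simple cycle of length $\ge c$, fills a small gap the paper glosses over.
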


\begin{proof}
Let $C$ be a cycle  in $G \otimes_f H$ that contains $\{(g,f(g'),(g',f(g))\}$.
Suppose that 
$\{(g,f(g'),(g',f(g))\},\{(g',f(g_1),(g_1,f(g'))\}, 
\dots,$  $\{(g_k,f(g),(g,f(g_k))\}$
are the connecting edges in $C$   in that order. Then $g g' g_1 g_2 \dots g_k g$ is a cycle of length $k$  in $G$ that contains the edge  $\{g,g'\}$, so $k \ge c$. Furthermore,
if $\{g,g'\}$ is not contained in any cycle of $G$, then the edge
$\{(g,f(g'),(g',f(g))\}$ can not be  contained in any cycle of  $G \otimes_f H$. 
Recall that if $f$ is locally injective,   any vertex of $G \otimes_f H$ is an end-vertex of at most
one connecting edge by Lemma \ref{lem:atmostone}.  
Therefore in this case the shortest cycle that contains $\{(g,f(g'),(g',f(g))\}$
has length at least $2c$.
\end{proof}

\begin{proposition}
	\label{lem:fep}
Let $G$ and $H$ be  connected graphs, let $f: V(G)  \to V(H)$ be any mapping
and let the girth of $G$ be equal to $c$. In any of the following cases
every automorphism of $G \otimes_f H$  respects the fundamental edge partition:
\begin{itemize}
\item[(i)] $G$ is a tree and $H$ is a bridgeless graph;
\item[(ii)] every edge of $H$ is contained in a cycle of length at most $c-1$;
 \item[(iii)] mapping $f$ is  locally injective and every edge of $H$ 
              is contained in a cycle of length at most $2c-1$.
\end{itemize}
\end{proposition}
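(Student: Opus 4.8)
The plan is to attach to every edge $e$ of $G\otimes_f H$ a single invariant $\ell(e)$, namely the length of a shortest cycle through $e$, with the convention $\ell(e)=\infty$ when $e$ lies in no cycle. Any automorphism $\tilde\gamma$ of $G\otimes_f H$ sends a cycle of length $m$ through $e$ to a cycle of the same length through the image edge, so $\ell$ is preserved by $\tilde\gamma$. My strategy is to show that in each of the three cases the values of $\ell$ on inner edges are strictly separated from its values on connecting edges; this rules out any inner edge being mapped to a connecting edge. Since the edge set is finite and is the disjoint union of inner and connecting edges, an automorphism that sends every inner edge to an inner edge must permute the (finitely many) inner edges bijectively, and hence must send connecting edges to connecting edges as well. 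So it will suffice to establish this numerical separation.

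For the connecting edges the bounds come straight from Lemma~\ref{lem:shortcycles}. A connecting edge has the form $\{(g,f(g')),(g',f(g))\}$ for an edge $\{g,g'\}\in E(G)$. If $\{g,g'\}$ lies in no cycle of $G$, then $\ell=\infty$ by part~(i); otherwise a shortest cycle of $G$ through $\{g,g'\}$ has length at least $\girth(G)=c$, so part~(ii) gives $\ell\ge c$, and when $f$ is locally injective part~(iii) upgrades this to $\ell\ge 2c$. For the inner edges I use that each copy $gH$ is an isomorphic copy of $H$ by Lemma~\ref{lem:HGcopy}(i): an inner edge corresponds to an edge of $H$, and a cycle of $H$ through that edge becomes a cycle of the same length through the inner edge inside $gH$, hence inside $G\otimes_f H$.

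With these bounds the three cases are immediate. In case~(i), $G$ is a tree, so every edge of $G$ is a bridge and hence every connecting edge has $\ell=\infty$; since $H$ is bridgeless, every edge of $H$ lies in a cycle, so every inner edge has $\ell<\infty$. In case~(ii), every inner edge satisfies $\ell\le c-1$, while every connecting edge satisfies $\ell\ge c$ (or $\ell=\infty$). In case~(iii), local injectivity gives $\ell\ge 2c$ (or $\infty$) for connecting edges, whereas every inner edge satisfies $\ell\le 2c-1$. In each case the two ranges of values are disjoint, which is exactly the separation needed.

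The argument is essentially forced once Lemma~\ref{lem:shortcycles} is available; the only point requiring care is case~(iii), where the strict separation $2c-1<2c$ relies crucially on the factor-of-two improvement that local injectivity provides. Without it one would only have $\ell\ge c$ for connecting edges against $\ell\le 2c-1$ for inner edges, and the overlap $[c,2c-1]$ would make the two edge types indistinguishable by $\ell$; hence one must be sure to invoke Lemma~\ref{lem:shortcycles}(iii), and not merely (ii), in that case.
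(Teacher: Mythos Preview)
Your argument is correct and follows essentially the same route as the paper: use Lemma~\ref{lem:shortcycles} to bound the shortest cycle length through a connecting edge, use the hypothesis on $H$ (via Lemma~\ref{lem:HGcopy}(i)) to bound it for inner edges, and observe that an automorphism preserves this invariant. Your version is slightly more explicit than the paper's (you name the invariant $\ell(e)$ and spell out the finiteness/bijectivity step), but the underlying idea is identical.
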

\begin{proof}
By Lemma \ref{lem:shortcycles}, the  shortest cycle that contains a connecting
edge has length at least $c$ in case (ii),  length at least $2c$ in case (iii) and is not contained
in any cycle in case (i). Since every inner edge is contained in a cycle, in a cycle
of length at most $c-1$, in a cycle of length at most $2c-1$ in cases (i), (ii), (iii), respectively,
a connecting edge cannot be mapped to an inner edge by any automorphism.
\end{proof}

Using Propositions 	\ref{lem:fep_trianglefree} and \ref{lem:fep},
we see that in some cases the group of automorphisms that respect
the fundamental edge partition is in fact the whole automorphism group
of  $G \otimes_f H$.

\begin{theorem}
	\label{thm:fep_trianglefree}
Let $G$ be a connected regular triangle-free graph and 
let $f: V(G)  \to V(G)$ be an automorphism of $G$.
Then
$$\tilde A(G,G,f)= \Aut(G \otimes_f G).$$
\end{theorem}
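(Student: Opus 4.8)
The plan is to observe that this theorem is an immediate consequence of Proposition \ref{lem:fep_trianglefree} together with the definition of $\tilde A(G,G,f)$. Recall that $\tilde A(G,H,f)$ was introduced as the set of all automorphisms of $G \otimes_f H$ that respect the fundamental edge partition, and it was already noted there that this set is a subgroup of the full automorphism group of $G \otimes_f H$. In particular, for any choice of connected $G$, $H$ and any $f$ we have the trivial inclusion $\tilde A(G,G,f) \subseteq \Aut(G \otimes_f G)$, since membership in $\tilde A(G,G,f)$ presupposes being an automorphism of $G \otimes_f G$.

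Hence the only thing to establish is the reverse inclusion $\Aut(G \otimes_f G) \subseteq \tilde A(G,G,f)$, and this is precisely the assertion of Proposition \ref{lem:fep_trianglefree}: under the hypotheses that $G$ is a connected regular triangle-free graph and that $f$ is an automorphism of $G$, every automorphism of $G \otimes_f G$ respects the fundamental edge partition. Thus every element of $\Aut(G \otimes_f G)$ lies in $\tilde A(G,G,f)$, and combining the two inclusions yields the claimed equality $\tilde A(G,G,f) = \Aut(G \otimes_f G)$.

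I would note that there is no genuine obstacle left to overcome at this stage, since the substantive argument has already been carried out in the proof of Proposition \ref{lem:fep_trianglefree}. That is where the real work lives: the valency bookkeeping via Lemma \ref{lem:atmostone}(ii), which forces every connecting edge to have both endvertices of valency $k+1$, and the observation that an inner edge $\{(g,g_1),(g,g_2)\}$ with both endvertices of valency $k+1$ would force $g_1'$ and $g_2'$ (the neighbours of $g$ with $f(g_1')=g_1$, $f(g_2')=g_2$) to be adjacent, producing a triangle $g\,g_1'\,g_2'$ in $G$ and contradicting triangle-freeness. Consequently no inner edge can be mapped to a connecting edge, so the partition is respected. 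The present theorem is simply the restatement of that conclusion at the level of the group $\tilde A(G,G,f)$, so I would present it as a one-line deduction citing Proposition \ref{lem:fep_trianglefree}.
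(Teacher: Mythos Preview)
Your proposal is correct and matches the paper's approach exactly: the paper also presents this theorem as an immediate consequence of Proposition~\ref{lem:fep_trianglefree}, stating it without a separate proof right after the line ``Using Propositions \ref{lem:fep_trianglefree} and \ref{lem:fep}, we see that in some cases the group of automorphisms that respect the fundamental edge partition is in fact the whole automorphism group of $G \otimes_f H$.'' Your recap of the underlying argument from Proposition~\ref{lem:fep_trianglefree} is accurate and nothing further is needed.
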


\begin{theorem}
\label{thm:auto3}
Let $G$ and $H$ be  connected graphs, let $f: V(G)  \to V(H)$ be any mapping
and let the girth of $G$ be equal to $c$. In any of the following cases
\begin{itemize}
\item[(i)] $G$ is a tree and $H$ is a bridgeless graph:
\item[(ii)] every edge of $H$ is contained in a cycle of length at most $c-1$;
 \item[(iii)] mapping $f$ is locally  injective and every edge of $H$ 
              is contained in a cycle of length at most $2c-1$;
\end{itemize}
the group $\tilde A(G,H,f)$ is equal to 
$\Aut(G \otimes_f H).$

\end{theorem}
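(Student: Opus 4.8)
The plan is to obtain this theorem as an immediate consequence of Proposition \ref{lem:fep}, which under each of the three hypotheses already guarantees that every automorphism of $G \otimes_f H$ respects the fundamental edge partition. The only extra ingredient is the elementary observation, recorded just after the definition of $\tilde{A}(G,H,f)$, that this set is a subgroup of $\Aut(G \otimes_f H)$. Thus the argument reduces to verifying two inclusions coming from opposite directions.

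First I would note that the inclusion $\tilde{A}(G,H,f) \subseteq \Aut(G \otimes_f H)$ holds unconditionally, since by definition every element of $\tilde{A}(G,H,f)$ is an automorphism of $G \otimes_f H$. For the reverse inclusion I would invoke Proposition \ref{lem:fep}: in each of the cases (i), (ii), (iii), every $\gamma \in \Aut(G \otimes_f H)$ maps inner edges to inner edges and connecting edges to connecting edges, i.e. respects the fundamental edge partition. By the very definition of $\tilde{A}(G,H,f)$ this places $\gamma$ inside $\tilde{A}(G,H,f)$, so $\Aut(G \otimes_f H) \subseteq \tilde{A}(G,H,f)$. Combining the two inclusions gives the asserted equality $\tilde{A}(G,H,f) = \Aut(G \otimes_f H)$.

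Since the deduction itself is a single step, there is no genuine obstacle at this stage; the real content sits in Proposition \ref{lem:fep} and its supporting Lemma \ref{lem:shortcycles}, which distinguish the two edge types by a cycle-length invariant. A connecting edge lies on no short cycle (of length at least $c$ in case (ii), at least $2c$ when $f$ is locally injective in case (iii), and on no cycle at all when $G$ is a tree in case (i)), whereas every inner edge of $H$ lies on a correspondingly short cycle. Because an automorphism preserves the length of the shortest cycle through a given edge, it cannot interchange the two classes, and the present theorem merely records this fact at the level of automorphism groups. The one point to keep in mind — already handled in the earlier results — is that in the tree case (i) the girth $c$ is not finite, so one argues instead that connecting edges lie on no cycle while inner edges, arising from a bridgeless $H$, always do.
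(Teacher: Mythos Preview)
Your proposal is correct and matches the paper's own treatment: the theorem is stated as an immediate consequence of Proposition~\ref{lem:fep}, with the reverse inclusion being the trivial containment $\tilde A(G,H,f)\subseteq\Aut(G\otimes_f H)$.
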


\subsection{Group of automorphisms of $G\otimes_f G$}

We now consider the group of automorphisms that respect the fundamental edge
partition  in the special case when $G=H$ and $f:V(G)\to V(G)$ is an automorphism.
Since in this case $G\otimes_f G$ is isomorphic to $G\otimes G$ we could 
restrict ourselves to the case where $f$ is the identity.
Note that in that case the structure of the automorphism group was sketched in the paper \cite{Kovse}, 
but the proofs were never published.

Recall that by Corollary \ref{cor:auto12}, every automorphism $\alpha$ of $G$ 
has a lift,  $\Psi(\alpha, f \circ \alpha \circ f^{-1})$.
We call this automorphism the \emph{diagonal automorphism}
of $G \otimes_f G$ corresponding to $\alpha$ and denote it by $\bar{\alpha}$.
Denote by $\bar A(G,f)$ the set of all diagonal automorphisms.
The following proposition is straightforward to prove.

\begin{proposition}
The set $\bar A(G,f)$ is a subgroup of $\tilde A(G,G,f)$, isomorphic to $\Aut(G)$.
\end{proposition}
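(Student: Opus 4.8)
The plan is to exhibit an explicit map $\Theta\colon \Aut(G)\to \bar A(G,f)$ given by $\Theta(\alpha)=\bar\alpha=\Psi(\alpha,\,f\circ\alpha\circ f^{-1})$ and to show it is a group isomorphism whose image lies inside $\tilde A(G,G,f)$. That each $\bar\alpha$ is a genuine automorphism is precisely Corollary~\ref{cor:auto12}, and that it respects the fundamental edge partition then follows from the ``moreover'' clause of Proposition~\ref{prop:auto12}. Hence $\bar A(G,f)\subseteq \tilde A(G,G,f)$ is automatic once $\Theta$ is seen to be well defined, and the only real work is to check that $\Theta$ is an injective group homomorphism; its image is then a subgroup of $\tilde A(G,G,f)$ isomorphic to $\Aut(G)$.

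The heart of the argument is the homomorphism property, which reduces to a one-line conjugation identity. First I would compose two diagonal automorphisms straight from the definition of $\Psi$. Writing $\beta=f\circ\alpha\circ f^{-1}$ and $\beta'=f\circ\alpha'\circ f^{-1}$, for any vertex $(g,h)$ we get
\[
(\bar\alpha\circ\bar\alpha')(g,h)=\bar\alpha\bigl(\alpha'(g),\beta'(h)\bigr)=\bigl(\alpha(\alpha'(g)),\,\beta(\beta'(h))\bigr).
\]
The first coordinate is $(\alpha\circ\alpha')(g)$, and in the second coordinate the inner $f^{-1}\circ f$ cancels, giving
\[
\beta\circ\beta'=f\circ\alpha\circ f^{-1}\circ f\circ\alpha'\circ f^{-1}=f\circ(\alpha\circ\alpha')\circ f^{-1}.
\]
Thus $\bar\alpha\circ\bar\alpha'=\Psi(\alpha\circ\alpha',\,f\circ(\alpha\circ\alpha')\circ f^{-1})=\overline{\alpha\circ\alpha'}$, i.e.\ $\Theta(\alpha)\circ\Theta(\alpha')=\Theta(\alpha\circ\alpha')$. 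In particular $\bar A(G,f)$ is closed under composition, contains $\overline{\mathrm{id}}=\mathrm{id}$, and each $\bar\alpha$ has inverse $\overline{\alpha^{-1}}$, so it is a subgroup.

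For injectivity I would read off the first coordinate: if $\bar\alpha=\overline{\alpha'}$ then $\alpha(g)=\alpha'(g)$ for every $g\in V(G)$, so $\alpha=\alpha'$; equivalently, $\alpha$ is recovered as the (uniquely determined) projection of $\bar\alpha$ onto $G$. Since $\Theta$ is then a bijective homomorphism onto $\bar A(G,f)$, the latter is isomorphic to $\Aut(G)$, which completes the proof. I do not expect a genuine obstacle here: the statement is essentially the observation that $\alpha\mapsto f\circ\alpha\circ f^{-1}$ is a conjugation, so the second-coordinate maps multiply compatibly with the first. The only points needing care are the bookkeeping with the order of composition when the two copies of $f^{-1}\circ f$ meet, and making sure the ``moreover'' clauses of Corollary~\ref{cor:auto12} and Proposition~\ref{prop:auto12} are invoked, so that membership in $\tilde A(G,G,f)$—and not merely in $\Aut(G\otimes_f G)$—is justified.
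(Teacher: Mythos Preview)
Your proof is correct; the paper omits the argument entirely, stating only that the proposition is ``straightforward to prove,'' and your verification via the conjugation identity $\beta\circ\beta'=f\circ(\alpha\circ\alpha')\circ f^{-1}$ together with the appeal to Corollary~\ref{cor:auto12} and the ``moreover'' clause of Proposition~\ref{prop:auto12} is exactly the natural way to fill in the details. One tiny quibble: only Proposition~\ref{prop:auto12} carries a ``moreover'' clause (Corollary~\ref{cor:auto12} does not), but you already invoke the right statement earlier, so this is purely cosmetic.
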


To determine the structure of the group $\tilde A(G,G,f)$, we first show that every element
of $\tilde A(G,G,f)$ can be written as a product of an element from $\hat B(G,G,f)$
and an element of $\bar A(G,f)$. Furthermore, we show that $\hat B(G,G,f)$ is normal in $\tilde A(G,G,f)$. 

\begin{theorem}
\label{thm:auto1}
Let $G$ be a connected graph and let $f: V(G)  \to V(G)$ be an automorphism.
Let $\tilde \gamma$ be an automorphism of $G \otimes_f G$.
Then there exist $\alpha \in \Aut(G)$ and $\beta_g \in \Aut(G)_{f(N(g))}$ for  every $g \in V(G)$
such that 
$\tilde \gamma = \bar \alpha \,  \left(\prod_{g \in V(G)}\, \Phi(g,\beta_g) \right).$
\end{theorem}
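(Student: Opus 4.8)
The plan is to take an arbitrary $\tilde\gamma \in \tilde A(G,G,f)$ and use the structure results already established to decompose it. By Theorem~\ref{thm:autoform}, every automorphism respecting the fundamental edge partition has the form $\Psi(\alpha,\B)$ for some $\alpha \in \Aut(G)$ and some $\B : V(G) \to \Aut(G)$; so I may write $\tilde\gamma = \Psi(\alpha,\B)$ with $\tilde\gamma(g,h) = (\alpha(g), \beta_g(h))$. The natural candidate for the ``diagonal part'' is $\bar\alpha = \Psi(\alpha, f\circ\alpha\circ f^{-1})$, which is a genuine automorphism by Corollary~\ref{cor:auto12}. The strategy is then to factor $\tilde\gamma = \bar\alpha \cdot \rho$ and show that the remaining factor $\rho = \bar\alpha^{-1}\tilde\gamma$ lands in $\hat B(G,G,f) = \prod_{g} \Aut(G)_{f(N(g))}$, i.e.\ it fixes every copy $gG$ setwise and acts on $gG$ by an element stabilizing $f(N(g))$ pointwise.

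First I would compute $\rho = \bar\alpha^{-1}\tilde\gamma$ explicitly. Since $\bar\alpha^{-1} = \Psi(\alpha^{-1}, f\circ\alpha^{-1}\circ f^{-1})$, composing the two $\Psi$-maps gives an automorphism of the form $\Psi(\mathrm{id}, \B')$, where the $G$-coordinate cancels and the fibre permutations are $\beta'_g = (f\circ\alpha^{-1}\circ f^{-1})_{\alpha(g)} \circ \beta_g$ for a suitably indexed product; the key point is that $\rho$ fixes each subgraph $gG$ setwise because its projection on $G$ is the identity. The second step is to verify that $\rho \in \hat B(G,G,f)$. Since $\rho$ fixes every copy setwise, by Remark~\ref{rem:specialcase} it equals a product $\prod_{g \in V(G)} \Phi(g,\beta'_g)$ of the second family of automorphisms, one factor per vertex, and these commute by Proposition~\ref{prop:auto22}. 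Because $\rho$ is itself an automorphism respecting the fundamental edge partition, Proposition~\ref{prop:auto21} forces each $\beta'_g$ to lie in $\Aut(G)_{f(N(g))}$; this is exactly the condition defining $\hat B(G,G,f)$. Renaming $\beta'_g$ as $\beta_g$ and rearranging yields $\tilde\gamma = \bar\alpha\,\bigl(\prod_{g \in V(G)} \Phi(g,\beta_g)\bigr)$.

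The main obstacle I expect is the bookkeeping in the first step: correctly tracking how the fibre automorphisms compose under $\bar\alpha^{-1}\tilde\gamma$, since the index $g$ is shifted by $\alpha$ when one $\Psi$-map is applied after another, so the product $\Psi(\alpha_1,\B_1)\Psi(\alpha_2,\B_2)$ is $\Psi(\alpha_1\alpha_2, g \mapsto (\beta_1)_{\alpha_2(g)}(\beta_2)_g)$ rather than a naive componentwise product. I would establish this composition rule as a preliminary calculation (it follows directly from Definition~\ref{def:auto1}) and then specialize it to confirm that the resulting $\rho$ has trivial $G$-projection. Once the composition law is pinned down, verifying membership in $\hat B(G,G,f)$ is essentially an application of Proposition~\ref{prop:auto21}, and the decomposition follows. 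A minor point worth checking is that $\bar\alpha$ really is the correct diagonal factor so that the $G$-coordinate cancels exactly; this is guaranteed by the defining relation $f\circ\alpha = \beta\circ f$ from Proposition~\ref{prop:auto12a} underlying Corollary~\ref{cor:auto12}.
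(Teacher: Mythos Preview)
Your proposal is correct and follows essentially the same route as the paper's own proof: identify the projection $\alpha$, form the diagonal lift $\bar\alpha$, and show that the residual $\rho=\bar\alpha^{-1}\tilde\gamma$ has trivial $G$-projection and hence lies in $\hat B(G,G,f)$. The only differences are stylistic: the paper argues directly that $\bar\alpha$ already agrees with $\tilde\gamma$ on the endvertices of every connecting edge (so the residual fixes $f(N(g))$ in each copy), whereas you phrase the same fact via the composition law for $\Psi$-maps and then invoke Proposition~\ref{prop:auto12} (more precisely than Proposition~\ref{prop:auto21}, which is stated for a single $\Phi(g,\beta)$) with $\alpha=\mathrm{id}$ to read off $\beta'_g\in\Aut(G)_{f(N(g))}$.
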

\begin{proof}
Let $\alpha$ be the projection of $\tilde \gamma$ to $\Aut(G)$.
Then $\bar \alpha=\Psi(\alpha, f \circ \alpha \circ f^{-1})$  permutes
the copies $gG$ in the right way. Observe that $\bar \alpha$ already agrees 
with $\tilde \gamma$ on the endvertices of all the connecting edges.
To obtain $\tilde \gamma$ from $\bar \alpha$, we only need to adjust the
action of $\bar \alpha$ on the vertices that are not endvertices of connecting edges.
We can do this on every copy $gG$ separately, by acting with $\Phi(g,\beta_g)$,
where  $\beta_g \in \Aut(G)$ is induced by $\bar \alpha^{-1} \tilde \gamma  $.
Also $\beta_g \in \Aut(G)_{f(N(g))}$   since the vertices $f(N(g))$ have the right image already and are fixed.
\end{proof}

\begin{theorem}  \label{thm:Bnormal}
Let $f:V(G) \to V(G)$ be an automorphism.
Then $\hat B(G,G,f)$ is a normal subgroup of  group $\tilde A(G,G,f)$ 
\end{theorem}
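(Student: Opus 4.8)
The plan is to prove normality by controlling how the generators of $\hat B(G,G,f)$ transform under conjugation. Since $\hat B(G,G,f)$ is generated by the maps $\Phi(g,\beta)$ with $\beta \in \Aut(G)_{f(N(g))}$, and conjugation by a fixed $\tilde\gamma$ is a group automorphism, it suffices to show that $\tilde\gamma\,\Phi(g,\beta)\,\tilde\gamma^{-1} \in \hat B(G,G,f)$ for every generator. I would first cut down the class of conjugators: by Theorem \ref{thm:auto1}, every $\tilde\gamma \in \tilde A(G,G,f)$ factors as $\tilde\gamma = \bar\alpha\, b$ with $\bar\alpha \in \bar A(G,f)$ a diagonal automorphism and $b \in \hat B(G,G,f)$. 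As $\hat B(G,G,f)$ is already a subgroup, conjugation by $b$ sends $\hat B(G,G,f)$ into itself, so it is enough to prove $\bar\alpha\,\hat B(G,G,f)\,\bar\alpha^{-1} \subseteq \hat B(G,G,f)$ for every $\bar\alpha$.

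The heart of the argument is a direct computation of $\bar\alpha\,\Phi(g,\beta)\,\bar\alpha^{-1}$. Writing $\delta = f\circ\alpha\circ f^{-1}$, so that $\bar\alpha(g_1,h_1)=(\alpha(g_1),\delta(h_1))$ and $\bar\alpha^{-1}(g_1,h_1)=(\alpha^{-1}(g_1),\delta^{-1}(h_1))$, I would trace an arbitrary vertex $(g_1,h_1)$ through the three maps. The composite fixes $(g_1,h_1)$ whenever $g_1 \neq \alpha(g)$, while it sends $(\alpha(g),h_1)$ to $(\alpha(g),\delta\beta\delta^{-1}(h_1))$. Comparing with the definition of $\Phi$, this gives the clean formula $\bar\alpha\,\Phi(g,\beta)\,\bar\alpha^{-1} = \Phi(\alpha(g),\delta\beta\delta^{-1})$.

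It then remains to verify the stabilizer condition, namely $\delta\beta\delta^{-1} \in \Aut(G)_{f(N(\alpha(g)))}$, so that the conjugate is genuinely a generator of $\hat B(G,G,f)$. Here I would use that $\alpha$ is an automorphism, hence $N(\alpha(g)) = \alpha(N(g))$, to write an arbitrary element of $f(N(\alpha(g)))$ as $f(\alpha(g'))$ with $g' \in N(g)$. A short computation gives $\delta^{-1}(f(\alpha(g'))) = f(g')$; since $f(g') \in f(N(g))$ is fixed by $\beta$, applying $\delta$ recovers $f(\alpha(g'))$. Thus $\delta\beta\delta^{-1}$ fixes $f(N(\alpha(g)))$ pointwise, placing $\Phi(\alpha(g),\delta\beta\delta^{-1})$ in $\hat B(G,G,f)$ and completing the proof.

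I expect the main difficulty to be organizational rather than conceptual: the reduction to diagonal conjugators via Theorem \ref{thm:auto1} must be set up carefully, and in the conjugation computation one must keep track that the second coordinate is twisted by $\delta$, not by $\alpha$, since $\bar\alpha = \Psi(\alpha, f\circ\alpha\circ f^{-1})$. Once the identity $\Phi(\alpha(g),\delta\beta\delta^{-1})$ is established, the stabilizer verification is a brief calculation driven entirely by the relation $N(\alpha(g)) = \alpha(N(g))$.
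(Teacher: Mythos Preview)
Your proof is correct but follows a different route from the paper. The paper's argument is a one-liner: it observes that the map $\lambda: \tilde A(G,G,f) \to \tilde A(G,G,f)$ sending $\Psi(\alpha,\B) \mapsto \Psi(\alpha, f\circ\alpha\circ f^{-1}) = \bar\alpha$ is a group homomorphism whose kernel is exactly $\hat B(G,G,f)$, and kernels are normal. Your approach instead computes conjugation directly: reduce to diagonal conjugators via Theorem~\ref{thm:auto1}, establish the explicit formula $\bar\alpha\,\Phi(g,\beta)\,\bar\alpha^{-1} = \Phi(\alpha(g),\delta\beta\delta^{-1})$ with $\delta = f\alpha f^{-1}$, and verify the stabilizer condition. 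The paper's route is shorter and more conceptual, though it leaves the verification that $\lambda$ is a homomorphism implicit. Your route is more computational but entirely self-contained, and the conjugation identity you derive is itself valuable: it makes the action of $\bar A(G,f)$ on $\hat B(G,G,f)$ explicit, which is precisely what one needs to understand the semidirect product in Theorem~\ref{thm:auto2}.
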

\begin{proof}
Observe that the mapping $\lambda: \tilde A(G,G,f) \to \tilde A(G,G,f)$
defined by $\lambda:\Psi(\alpha, \B) \to \Psi(\alpha, f \circ \alpha \circ f^{-1})$
is a homomorphism of groups with $\hat B(G,G,f)$ being its kernel. 
Therefore $\hat B(G,G,f)$ is a normal subgroup of $\tilde A(G,G,f)$.
\end{proof}

\begin{theorem}
\label{thm:auto2}
Let $G$ be a connected graph and let $f: V(G)  \to V(G)$ be an automorphism.
Then  the group $\tilde A(G,G,f)$ is a semidirect product,
$$\tilde A(G,G,f) = \bar A(G,f) \ltimes \hat B(G,G,f).$$
\end{theorem}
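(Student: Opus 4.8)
The plan is to verify the three defining conditions of an internal semidirect product, taking $\hat B(G,G,f)$ as the normal factor and $\bar A(G,f)$ as the complement: namely that (1) $\hat B(G,G,f)$ is normal in $\tilde A(G,G,f)$, (2) $\tilde A(G,G,f) = \bar A(G,f)\cdot \hat B(G,G,f)$, and (3) $\bar A(G,f)\cap \hat B(G,G,f) = \{\mathrm{id}\}$. Two of these are already in hand: normality is exactly Theorem \ref{thm:Bnormal}, and the product decomposition is the content of Theorem \ref{thm:auto1}, which writes every $\tilde\gamma \in \tilde A(G,G,f)$ as $\bar\alpha \cdot \prod_{g\in V(G)}\Phi(g,\beta_g)$ with the second factor lying in $\hat B(G,G,f)$. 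So the only genuinely new step is the triviality of the intersection, and the orientation of $\ltimes$ (normal factor on the right) is consistent with Theorem \ref{thm:Bnormal}.

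For condition (3) I would use the projection homomorphism $\pi:\tilde A(G,G,f)\to\Aut(G)$ introduced at the start of Subsection \ref{subsec:fep}, which sends $\tilde\gamma$ to the permutation $\alpha$ induced on the copies $gG$. Each generator $\Phi(g,\beta_g)$ of $\hat B(G,G,f)$ fixes every copy $g'G$ setwise, so $\pi$ is trivial on $\hat B(G,G,f)$, i.e.\ $\hat B(G,G,f)\subseteq\ker\pi$; conversely, for a diagonal automorphism $\bar\alpha = \Psi(\alpha, f\circ\alpha\circ f^{-1})$ one has $\pi(\bar\alpha)=\alpha$, so $\pi$ restricts to the isomorphism $\bar A(G,f)\cong\Aut(G)$ already recorded. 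Now take $\tilde\gamma\in\bar A(G,f)\cap\hat B(G,G,f)$; writing $\tilde\gamma=\bar\alpha$ and applying $\pi$ gives $\alpha=\pi(\tilde\gamma)=\mathrm{id}$, whence $\bar\alpha = \Psi(\mathrm{id}, f\circ\mathrm{id}\circ f^{-1}) = \Psi(\mathrm{id},\mathrm{id}) = \mathrm{id}$, proving (3). In fact the same computation shows $\ker\pi = \hat B(G,G,f)$ exactly (using Theorem \ref{thm:auto1} for the reverse inclusion and Corollary \ref{cor:auto12} for surjectivity of $\pi$), so the whole statement can equivalently be phrased as the split short exact sequence $1\to\hat B(G,G,f)\to\tilde A(G,G,f)\xrightarrow{\pi}\Aut(G)\to 1$ with splitting $\alpha\mapsto\bar\alpha$.

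With all three conditions verified, the standard internal-semidirect-product criterion yields $\tilde A(G,G,f) = \bar A(G,f)\ltimes\hat B(G,G,f)$. I do not expect a serious obstacle, since Theorems \ref{thm:auto1} and \ref{thm:Bnormal} do the heavy lifting; the only point requiring care is the injectivity of the parametrization $\Psi$, namely that $\Psi(\alpha_1,\B_1)=\Psi(\alpha_2,\B_2)$ forces $\alpha_1=\alpha_2$ and then each $\beta_{1,g}=\beta_{2,g}$. This underlies both the inclusion $\hat B(G,G,f)\subseteq\ker\pi$ and the final collapse $\bar\alpha=\mathrm{id}$, and it follows immediately by reading off the first coordinate of $\Psi(\alpha,\B)(g,h)=(\alpha(g),\beta_g(h))$.
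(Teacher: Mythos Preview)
Your proof is correct and follows essentially the same approach as the paper: both invoke Theorem~\ref{thm:Bnormal} for normality, Theorem~\ref{thm:auto1} for the product decomposition, and then check that $\bar A(G,f)\cap\hat B(G,G,f)$ is trivial. The paper simply asserts this last point in one sentence, whereas you supply the explicit argument via the projection $\pi$ (and the equivalent split-exact-sequence reformulation), which is a welcome clarification rather than a different route.
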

\begin{proof}
Group $\hat B(G,G,f)$ is a normal subgroup of $\tilde A(G,G,f)$ by Theorem \ref{thm:Bnormal}.
By Theorem \ref{thm:auto1}, every element of $\tilde A(G,G,f)$
can be written as a product of a diagonal automorphism and
an element from $\hat B(G,G,f)$.
Moreover, only identity is in both  $\bar A(G,f)$  and $\hat B(G,G,f)$. This proves that
$\tilde A(G,G,f)$ is a semidirect product of $\bar A(G,f)$ and $\hat B(G,G,f)$.
\end{proof}

Note that in general not every automorphism of $G$ has a lift.
Diagonal mappings are well defined only if $f$ is a bijection.
Define $\bar A(G,H,f)$ to be the set of all diagonal mappings that
are also automorphisms. This is a subgroup of $\tilde A(G,H,f)$.
We believe that the following conjecture holds.

\begin{conjecture}
Let $G$ and $H$ be  connected  graphs on the same vertex set
 and let $f: V(G)  \to V(H)$ be a bijection.
Then the group $\tilde A(G,H,f)$ is a semidirect product,
$$\tilde A(G,H,f) = \bar A(G,H,f) \ltimes \hat B(G,H,f).$$
\end{conjecture}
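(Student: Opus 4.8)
The plan is to mimic the proof of Theorem~\ref{thm:auto2} while carefully isolating exactly where the hypothesis $G=H$ was used, and to check that the bijectivity of $f$ together with the assumption $V(G)=V(H)$ suffices to replace it. First I would recall that by Theorem~\ref{thm:autoform} every automorphism of $G\otimes_f H$ respecting the fundamental edge partition has the form $\Psi(\alpha,\B)$ for some $\alpha\in\Aut(G)$ and some $\B:V(G)\to\Aut(H)$. The three structural facts I need to assemble are: (1) the set $\bar A(G,H,f)$ of diagonal mappings that are automorphisms is a subgroup of $\tilde A(G,H,f)$, which is asserted just before the conjecture; (2) $\hat B(G,H,f)$ is a normal subgroup of $\tilde A(G,H,f)$; and (3) every element of $\tilde A(G,H,f)$ factors as a diagonal automorphism times an element of $\hat B(G,H,f)$, with the two subgroups meeting only in the identity. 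Given (1)--(3), the semidirect product decomposition follows by the standard internal-characterization of semidirect products, exactly as in the proof of Theorem~\ref{thm:auto2}.

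For the factorization step (3), I would imitate the proof of Theorem~\ref{thm:auto1}: given $\tilde\gamma\in\tilde A(G,H,f)$, let $\alpha\in\Aut(G)$ be its projection onto $G$. The key observation is that since $f$ is a bijection between $V(G)=V(H)$, a diagonal automorphism $\Psi(\alpha, f\circ\alpha\circ f^{-1})$ can be formed; I must verify it actually is an automorphism. By Proposition~\ref{prop:auto12a} this requires $f\circ\alpha = (f\circ\alpha\circ f^{-1})\circ f$, which holds identically, so $\bar\alpha := \Psi(\alpha, f\circ\alpha\circ f^{-1})$ lies in $\bar A(G,H,f)$. As in Theorem~\ref{thm:auto1}, $\bar\alpha$ already agrees with $\tilde\gamma$ on the endvertices of all connecting edges, so $\bar\alpha^{-1}\tilde\gamma$ fixes each copy $gH$ setwise and acts on it by some $\beta_g\in\Aut(H)$ that fixes $f(N(g))$ pointwise; hence $\bar\alpha^{-1}\tilde\gamma = \prod_{g\in V(G)}\Phi(g,\beta_g)\in\hat B(G,H,f)$ by Proposition~\ref{prop:auto21}. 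The trivial-intersection claim follows because a nonidentity diagonal automorphism must move some copy $gH$ to a different copy (its projection $\alpha$ is nontrivial), whereas every element of $\hat B(G,H,f)$ fixes each copy setwise.

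For the normality step (2), I would try to reproduce the homomorphism argument of Theorem~\ref{thm:Bnormal}: define $\lambda:\tilde A(G,H,f)\to\tilde A(G,H,f)$ by $\lambda:\Psi(\alpha,\B)\mapsto\Psi(\alpha, f\circ\alpha\circ f^{-1})$ and check it is a group homomorphism whose kernel is exactly $\hat B(G,H,f)$. The kernel computation is clean: $\Psi(\alpha,\B)$ lies in the kernel iff its projection is trivial and each $\beta_g$ fixes $f(N(g))$, which characterizes $\hat B(G,H,f)$.

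\medskip
\noindent\textbf{Main obstacle.} The genuinely delicate point is verifying that $\lambda$ is well-defined and multiplicative in this more general setting. In Theorem~\ref{thm:Bnormal} the target diagonal automorphism $\Psi(\alpha, f\circ\alpha\circ f^{-1})$ was guaranteed to be an automorphism by Corollary~\ref{cor:auto12}, whose proof used $G=H$. Here I only have $V(G)=V(H)$ with $f$ a bijection, and I expect the hypothesis $G=H$ was implicitly needed to ensure that the conjugate $f\circ\alpha\circ f^{-1}$ is actually an \emph{automorphism of $H$} (not merely a permutation of $V(H)$): $\alpha$ is an automorphism of $G$, and transporting it through $f$ need not respect the edges of $H$ when $G\neq H$. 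This is precisely why the statement is only a conjecture rather than a theorem. So the crux is whether, for a bijection $f$ between the common vertex set, the diagonal map $\bar\alpha$ lands in $\Aut(H)$ at all; when it does, the argument above goes through verbatim, but in general this can fail, which is the heart of the difficulty. A candidate remedy would be to restrict attention to those $\alpha$ for which $f\circ\alpha\circ f^{-1}\in\Aut(H)$, and to show the projection of any $\tilde\gamma\in\tilde A(G,H,f)$ automatically has this property; establishing that implication is where the real work lies.
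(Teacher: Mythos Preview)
This statement is labeled a \emph{Conjecture} in the paper, and the paper offers no proof of it; it is left open immediately after the authors remark that diagonal mappings are well defined only when $f$ is a bijection and define $\bar A(G,H,f)$ as the subset of diagonal mappings that happen to be automorphisms. So there is no ``paper's own proof'' to compare against.

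Your proposal correctly identifies the natural strategy---transport the proof of Theorem~\ref{thm:auto2} to the more general setting---and, crucially, you also locate the genuine obstruction. One correction, though: in your factorization paragraph you invoke Proposition~\ref{prop:auto12a} to conclude that $\bar\alpha=\Psi(\alpha,\,f\circ\alpha\circ f^{-1})$ is automatically an automorphism because the functional identity $f\circ\alpha=(f\circ\alpha\circ f^{-1})\circ f$ holds. That application is illegitimate: Proposition~\ref{prop:auto12a} (and already Definition~\ref{def:auto1}) presupposes $\beta\in\Aut(H)$, and the whole point is that $f\circ\alpha\circ f^{-1}$ is only a permutation of $V(H)$, not \emph{a priori} an automorphism of $H$. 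You recognize this yourself in the ``Main obstacle'' paragraph, so the earlier paragraph should be rewritten to avoid the circularity.

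The residual problem you isolate is exactly the one the paper leaves open: for an arbitrary $\tilde\gamma\in\tilde A(G,H,f)$ with projection $\alpha\in\Aut(G)$, there is no mechanism in the paper ensuring that $f\circ\alpha\circ f^{-1}\in\Aut(H)$. The condition in Proposition~\ref{prop:auto12}, namely $f\circ\alpha=\beta_g\circ f$ on $N(g)$, constrains $f\circ\alpha\circ f^{-1}$ only on the sets $f(N(g))$, and these constraints refer to the edge structure of $G$, not of $H$; nothing forces compatibility with $E(H)$. Both your factorization step and your normality homomorphism $\lambda$ hinge on this missing implication, so until it is established (or replaced by a different argument) the conjecture remains open---which is consistent with the paper's treatment.
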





\section{Sierpi\'nski product with multiple factors}
\label{section:morefactors}

When extending the Sierpi\'nski product to more than two factors we first need to specify how 
the graph $G$ embeds into the product $G\otimes_f H$ in order to be able to multiply it with 
the next graph. This is exactly the role of the function $\varphi$ from Definition~\ref{def:sierpinski-product}.
Let $G_1$, $G_2$ and $G_3$ be graphs and $f: V(G_2) \rightarrow V(G_1)$, 
$f': V(G_3) \rightarrow V(G_2)$ be functions. 
Then the Sierpi\'nski product of these graphs is constructed so that we first build 
$(K,\varphi) = G_2\otimes_f G_1$ and then form the product 
$(K',\varphi') = G_3 \otimes_{\varphi\circ f'} K$.
Note that with given functions $f$ and $f'$ we cannot 
form this product in any other way, therefore Sierpi\'nski product is not associative.
We will denote such Sierpi\'nski product by $G_3 \otimes_{f'} G_2 \otimes_f G_1$. 


In Figure~\ref{fig:more_factors} it is shown how the product 
$C_3\otimes_{f'} C_4\otimes_{f} C_3$ is formed in two steps
(with  $f: V(C_4) \to V(C_3)$,  $f: i \mapsto  i\ (\bmod\ 3)$ and $f':V(C_3) \to V(C_4)$ 
being the identity function on its domain). 
\begin{figure}[htb!]
\begin{center}
	\begin{tikzpicture}[scale=0.50,style=thick]
	\def\vr{3pt/0.6}
	\begin{large}
		\draw (-3.9,4.3) node {$\otimes_{f}$};
		\draw (0.4,4.3) node {$=$};
	\end{large}
	\draw (-7,5.2)--(-7,3.5);
	\draw (-7,3.5)--(-5.3,3.5);
	\draw (-5.3,3.5)--(-5.3,5.2);
	\draw (-5.3,5.2)--(-7,5.2);
	\draw [fill=white] (-7,5.2) circle (\vr) node [anchor = south] {0};
	\draw [fill=white] (-7,3.5) circle (\vr) node [anchor = north] {1};
	\draw [fill=white] (-5.3,3.5) circle (\vr) node [anchor = north] {2};
	\draw [fill=white] (-5.3,5.2) circle (\vr) node [anchor = south] {3};
	\draw (-6.1,1.5) node {$G_2=C_4$};
	\draw (-2,5.23)--(-3,3.5);
	\draw (-1,3.5)--(-3,3.5);
	\draw (-2,5.23)--(-1,3.5);
	\draw [fill=white] (-2,5.23) circle (\vr) node [anchor = south] {0};
	\draw [fill=white] (-3,3.5) circle (\vr) node [anchor = north] {1};
	\draw [fill=white] (-1,3.5) circle (\vr) node [anchor = north] {2};
	\draw (-1.9,1.5) node {$G_1=C_3$};
	\draw (4,6.89)--(6,6.89); 
	\draw (7.41,5.41)--(7.41,3.41); %
	\draw (6,2)--(4,2); %
	\draw (2.59,3.41)--(2.59,5.41);
	\draw (2.07,7.35)--(4,6.89)--(2.59,5.41)--cycle; 
	\draw [fill=black] (4,6.89) circle (\vr) node [anchor = north] {00};
	\draw [fill=white] (2.59,5.41) circle (\vr) node [anchor = west] {01};
	\draw [fill=white] (2.07,7.35) circle (\vr) node [anchor = south] {02};
	\draw (2.07,1.48)--(2.59,3.41)--(4,2)--cycle; 
	\draw [fill=white] (2.59,3.41) circle (\vr) node [anchor = west] {10};
	\draw [fill=white] (4,2) circle (\vr) node [anchor = south] {12};
	\draw [fill=black] (2.07,1.48) circle (\vr) node [anchor = north] {11};
	\draw (7.93,1.48)--(6,2)--(7.41,3.41)--cycle; 
	\draw [fill=black] (7.93,1.48) circle (\vr) node [anchor = north] {22};
	\draw [fill=white] (6,2) circle (\vr) node [anchor = south] {21};
	\draw [fill=white] (7.41,3.41) circle (\vr)  node [anchor = east] {20};
	\draw (7.93,7.35)--(7.41,5.41)--(6,6.89)-- cycle; 
	\draw [fill=black] (6,6.89) circle (\vr) node [anchor = north] {30};
	\draw [fill=white] (7.41,5.41) circle (\vr) node [anchor = east] {32};
	\draw [fill=white] (7.93,7.35) circle (\vr)  node [anchor = south] {31};
	\draw (5,0.8) node {$K$};
	\end{tikzpicture}
\end{center}

\vspace{\baselineskip}

\begin{center}
	\begin{tikzpicture}[scale=0.50,style=thick]
	\def\vr{3.0pt/0.6}
	\begin{large}
		\draw (-4.8,4.4) node {$\otimes_{\varphi \circ f'}$};
		\draw (-3.,4.45) node {$K$};
		\draw (-1.8,4.4) node {$=$};
	\end{large}
	\draw (-7.5,5.23)--(-8.5,3.5);
	\draw (-6.5,3.5)--(-8.5,3.5);
	\draw (-7.5,5.23)--(-6.5,3.5);
	\draw [fill=white] (-7.5,5.23) circle (\vr) node [anchor = south] {0};
	\draw [fill=white] (-8.5,3.5) circle (\vr) node [anchor = north] {1};
	\draw [fill=white] (-6.5,3.5) circle (\vr) node [anchor = north] {2};
	\draw (-7.4,1.5) node {$G_3=C_3$};
	\draw (7.93,1.48)-- (9.47,-2.63);
	\draw (-1.46,-2.64)-- (2.07,1.48);
	\draw (7.53,-7.98)-- (2.47,-7.98); 
	\draw (4,6.83)--(6,6.83); 
	\draw (7.41,5.41)--(7.41,3.41); 
	\draw (6,2)--(4,2); 
	\draw (2.59,3.41)--(2.59,5.41); 
	\draw (2.07,7.35)--(4,6.83)--(2.59,5.41)--cycle; 
	\draw [fill=black] (4,6.83) circle (\vr) node [anchor = south] {000};
	\draw [fill=white] (2.59,5.41) circle (\vr);
	\draw [fill=white] (2.07,7.35) circle (\vr);
	\draw (2.07,1.48)--(2.59,3.41)--(4,2)--cycle; 
	\draw [fill=white] (4,2) circle (\vr);
	\draw [fill=white] (2.59,3.41) circle (\vr);
	\draw [fill=white] (2.07,1.48) circle (\vr) node [anchor = east] {$011$};
	\draw (7.93,1.48)--(6,2)--(7.41,3.41)--cycle; 
	\draw [fill=white] (6,2) circle (\vr);
	\draw [fill=white] (7.41,3.41) circle (\vr);
	\draw [fill=white] (7.93,1.48) circle (\vr) node [anchor = west] {$022$};
	\draw (7.93,7.35)--(7.41,5.41)--(6,6.83)--cycle; 
	\draw [fill=white] (7.41,5.41) circle (\vr);
	\draw [fill=white] (6,6.83) circle (\vr);
	\draw [fill=white] (7.93,7.35) circle (\vr);
	\draw (1.95,-4.05)--(1.95,-6.05); 
	\draw (0.54,-7.46)--(-1.46,-7.46); 
	\draw (-2.87,-6.05)--(-2.87,-4.05); 
	\draw (0.54,-2.64)--(-1.46,-2.64); 
	\draw (-1.46,-2.64)--(-2.87,-4.05)--(-3.39,-2.12)--cycle; 
	\draw [fill=white] (-3.39,-2.12) circle (\vr);
	\draw [fill=white] (-1.46,-2.64) circle (\vr); \draw (-1.1,-2.64) node [anchor = north] {$100$};
	\draw [fill=white] (-2.87,-4.05) circle (\vr);
	
	\draw (-2.87,-6.05)-- (-3.39,-7.98)--(-1.46,-7.46)--cycle; 
	\draw [fill=black] (-3.39,-7.98) circle (\vr) node [anchor = north] {$111$};
	\draw [fill=white] (-1.46,-7.46) circle (\vr);
	\draw [fill=white] (-2.87,-6.05) circle (\vr);
	
	\draw (2.47,-7.98)--(1.95,-6.05)--(0.54,-7.46)--cycle; 
	\draw [fill=white] (2.47,-7.98) circle (\vr) node [anchor = north] {$122$};
	\draw [fill=white] (1.95,-6.05) circle (\vr);
	\draw [fill=white] (0.54,-7.46) circle (\vr);
	
	\draw (0.54,-2.64)--(2.47,-2.12)--(1.95,-4.05)--cycle; 
	\draw [fill=white] (0.54,-2.64) circle (\vr);
	\draw [fill=white] (2.47,-2.12) circle (\vr);
	\draw [fill=white] (1.95,-4.05) circle (\vr);
	\draw (9.47,-2.63)--(11.47,-2.63); 
	\draw (12.88,-4.05)--(12.88,-6.05); 
	\draw (11.47,-7.46)--(9.47,-7.46); 
	\draw (8.05,-6.05)--(8.05,-4.05); 
	\draw (9.47,-2.63)--(8.05,-4.05)--(7.53,-2.11)--cycle; 
	\draw [fill=white] (7.53,-2.11) circle (\vr);
	\draw [fill=white] (9.47,-2.63) circle (\vr); \draw (9.85,-2.63) node [anchor = north] {$200$};
	\draw [fill=white] (8.05,-4.05) circle (\vr);	
	\draw (8.05,-6.05)--(7.53,-7.98)--(9.47,-7.46)--cycle; 
	\draw [fill=white] (7.53,-7.98) circle (\vr) node [anchor = north] {$211$};
	\draw [fill=white] (9.47,-7.46) circle (\vr);
	\draw [fill=white] (8.05,-6.05) circle (\vr);
	\draw (11.47,-7.46)--(12.88,-6.05)--(13.4,-7.98)--cycle; 
	\draw [fill=black] (13.4,-7.98) circle (\vr) node [anchor = north] {$222$};
	\draw [fill=white] (12.88,-6.05) circle (\vr);
	\draw [fill=white] (11.47,-7.46) circle (\vr);
	\draw (12.88,-4.05)--(11.47,-2.63)--(13.4,-2.11)--cycle; 
	\draw [fill=white] (13.4,-2.11) circle (\vr);
	\draw [fill=white] (11.47,-2.63) circle (\vr);
	\draw [fill=white] (12.88,-4.05) circle (\vr);
	\end{tikzpicture}
\end{center}

\caption{Construction of graph $C_3\otimes_{f'} C_4\otimes_{f} C_3$, where $f : i \mapsto i \ (\bmod\ 3)$ and $f' = {\rm id}$.}
\label{fig:more_factors}
\end{figure}

It is now easy to see that Sierpi\'nski products possess a nice recursive structure,
similar to Sierpi\'nski graphs and generalized Sierpi\'nski graphs. 
By the same reasoning as above, the product 
$G_{m}\otimes_{f_{m-1}} \cdots \otimes_{f_{2}} G_{2} \otimes_{f_{1}} G_{1}$, 
where $V(G_\ell) = \{0,1,\ldots, |G_\ell|-1\}$, 
and $f_\ell: V(G_{\ell+1}) \to V(G_\ell)$, $\ell = 1,\ldots,m-1$, are arbitrary functions,
can be constructed as follows.

\begin{itemize}
\item First, take $|G_2|$ copies of the graph $G_{1}$ 
and label them $iG_{1}$, $i\in\{0,\ldots,|G_2|-1\}$. 
Vertices of these graphs have labels $g_{2}g_{1}$.

\item Connect any two copies $iG_1$ and $jG_1$ if there is an edge $\{i,j\}$ in $G_2$. 
More precisely, if $\{i,j\}\in E(G_2)$, we add an edge 
$\{if_1(j),jf_1(i)\}$ between $iG_1$ and $jG_1$.
The resulting graph is then indeed the Sierpi\'nski product $G_2 \otimes_{f_1} G_1$ 
and the corresponding function $\varphi_1: V(G_2) \rightarrow V(G_2 \otimes G_1)$ 
maps $i$ to $if_1(i)$ for every $i\in \{0,\ldots,|G_2|-1\}$.
  
\item Next we form the Sierpi\'nski product of graphs $G_3$ and $K(2) := G_2\otimes_{f_1} G_1$.
To do so we take $|G_3|$ copies of graph $K(2) $, label them $iK(2) $, $i\in\{0,\ldots,|G_3|-1\}$, 
and connect $iK(2)$ and $jK(2)$ whenever $\{i,j\}$ is an edge in $G_3$. 
Such an edge then has the form $\{if_2(j)f_1(f_2(j)),jf_2(i)f_1(f_2(i))\}$.
 
\item The final step is to form the Sierpi\'nski product of graphs $G_m$ and $K(m-1)$ 
in the same way as we formed all the products so far: 
make $|G_m|$ copies of $K(m-1)$ and label them $iK(m-1)$;
then for every edge $\{i,j\}$ in $G_m$ we add an edge between copies $iK(m-1)$ and $jK(m-1)$.
Such an edge has then the following form
$$\{if_{m-1}(j)\ldots f_1(f_2\dots (f_{m-1}(j))\dots)\, ,\ 
jf_{m-1}(i)\ldots f_1(f_2\dots (f_{m-1}(i))\dots)\}.$$
 
The resulting graph is the product 
$G_{m}\otimes_{f_{m-1}} \cdots \otimes_{f_{2}} G_{2} \otimes_{f_{1}} G_{1}$.
\end{itemize}

If $G_1=\dots=G_m=G$ and functions $f_1,\ldots,f_{m-1}$ are all the identity function,
then $G_{m}\otimes_{f_{m-1}} \cdots \otimes_{f_{2}} G_{2} \otimes_{f_{1}} G_{1}$ is the
generalized Sierpi\'nski graph $S_G^n$; see also \cite{Kovse}.\\

We can calculate the order and the size of the Sierpi\'nski product of multiple
factors directly from the above construction.

\begin{proposition}
\label{prop:edges}
Let $m\ge 2$,  and let $ G_1 ,\ldots ,G_m $ be arbitrary graphs.  
Further let $f_{1}: V(G_2) \rightarrow V(G_{1}),\dots$, $f_{m-1}: V(G_m) \rightarrow V(G_{m-1})$ 
be arbitrary functions.
Then the order and size of Sierpi\'nski product $G_m \otimes_{f_{m-1}} \cdots \otimes_{f_1} G_1$ are as follows
\begin{align*}
| G_m \otimes_{f_{m-1}} \cdots \otimes_{f_1} G_1 | 
		&= \prod_{\ell=1}^{m} |G_\ell | \,, \\
|| G_m \otimes_{f_{m-1}} \cdots \otimes_{f_1} G_1 || 
		&= \sum_{\ell=1}^{m} \left( \prod_{j=\ell+1}^{m} |G_j| \right) ||G_\ell || \,. 
\end{align*}
\end{proposition}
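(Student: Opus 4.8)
The plan is to prove both formulas by induction on the number of factors $m$, using the explicit recursive construction given above and the two-factor count already established in Lemma~\ref{lem:edges}. Throughout, I write $K(\ell) = G_\ell \otimes_{f_{\ell-1}} \cdots \otimes_{f_1} G_1$ for the partial product on the first $\ell$ factors, so that $K(m)$ is the graph of interest and $K(\ell+1) = G_{\ell+1} \otimes_{\varphi_\ell \circ f_\ell} K(\ell)$ by construction.

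For the order, the base case $m=2$ is immediate from Lemma~\ref{lem:edges}, which gives $|G_2 \otimes_{f_1} G_1| = |G_1|\cdot|G_2|$. For the inductive step, I would observe that the vertex set of the Sierpi\'nski product is always the Cartesian product of the vertex sets of its two factors, regardless of the connecting function; hence
\begin{equation*}
|K(\ell+1)| = |G_{\ell+1}| \cdot |K(\ell)|,
\end{equation*}
and the product formula $\prod_{\ell=1}^{m}|G_\ell|$ follows by a routine telescoping induction.

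For the size, the base case is again Lemma~\ref{lem:edges}, which tells us that $\|G \otimes_f H\| = \|H\|\cdot|G| + \|G\|$. Applying this with the outer factor $G_{\ell+1}$ and the inner factor $K(\ell)$ gives the recursion
\begin{equation*}
\|K(\ell+1)\| = |G_{\ell+1}| \cdot \|K(\ell)\| + \|G_{\ell+1}\|.
\end{equation*}
The heart of the argument is to verify that the claimed closed form $\sum_{\ell=1}^{m}\bigl(\prod_{j=\ell+1}^{m}|G_j|\bigr)\|G_\ell\|$ satisfies this recursion: substituting $m \mapsto m-1$ into the formula, multiplying by $|G_m|$, and adding $\|G_m\|$ should reproduce the formula with the new top term $\bigl(\prod_{j=m+1}^{m}|G_j|\bigr)\|G_m\| = \|G_m\|$ (the empty product being $1$) and every lower term picking up exactly the extra factor $|G_m|$. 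This is a bookkeeping check on the index ranges of the nested products, and it is the only place where care is genuinely required.

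The main obstacle I anticipate is not the algebra but the \emph{legitimacy of applying Lemma~\ref{lem:edges} at each recursive step}: the lemma counts edges for a two-factor product with an \emph{arbitrary} function, and I must confirm that the connecting function $\varphi_\ell \circ f_\ell$ arising in the construction is indeed an ordinary function $V(G_{\ell+1}) \to V(K(\ell))$, so that the hypotheses of the lemma are met with no assumption of injectivity or surjectivity. Since $\varphi_\ell$ is a well-defined map $V(G_{\ell+1})\to V(K(\ell))$ by Definition~\ref{def:sierpinski-product} and $f_\ell$ is an arbitrary function, the composite is a legitimate function and Lemma~\ref{lem:edges} applies verbatim, giving exactly $\|G_{\ell+1}\|$ connecting edges and $|G_{\ell+1}|\cdot\|K(\ell)\|$ inner edges. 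Once this point is settled, both inductions close cleanly and the proof is complete.
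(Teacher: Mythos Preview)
Your proposal is correct and is precisely the argument the paper has in mind: the paper gives no explicit proof of this proposition, stating only that the formulas follow ``directly from the above construction,'' and your induction on $m$ via Lemma~\ref{lem:edges} is exactly that direct computation. One small slip: in your last paragraph you write that $\varphi_\ell$ is a map $V(G_{\ell+1})\to V(K(\ell))$, but by Definition~\ref{def:sierpinski-product} the embedding produced when forming $K(\ell)$ has domain $V(G_\ell)$; it is the \emph{composite} $\varphi_\ell\circ f_\ell$ that maps $V(G_{\ell+1})\to V(K(\ell))$, which is what you need and what you use elsewhere.
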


%

Note that neither the order nor the size of the Sierpi\'nski product depends on the functions $f_\ell$.

If $K := G_m \otimes_{f_{m-1}} \cdots \otimes_{f_1} G_1$, $m\ge 2$, is a Sierpi\'nski product, then the vertices of $K$ with some common prefix $g_m\ldots g_{\ell+1}$ ($\ell \ge 0$) belong to the same copy of $H := G_{\ell} \otimes_{f_{\ell-1}} \cdots \otimes_{f_1} G_1$. We generalize the notation from Section 2 and denote such copy by $g_m\ldots g_{\ell} H$, where $H = G_{\ell} \otimes_{f_{\ell-1}} \cdots \otimes_{f_1} G_1$. We will use this notation to state an upper bound on the diameter of a Sierpi\'nski product. 
But first let us prove an auxiliary result which we will require in the result about the diameter.

\begin{lemma}
	\label{lem:recursion}
	Let $\{a_m\}_{m\in\mathbb{N}}$ and $\{d_m\}_{m\in\mathbb{N}}$ be integer sequences 
	satisfying the following recursion
		$$ a_m = \left( d_{m} +1 \right) a_{m-1} + d_{m} \,, \quad a_1 = d_1 \,.$$
	Denote $[m] := \{1,\ldots,m\}$.
	Then the closed formula for $\{a_m\}_{m\in\mathbb{N}}$ is given by 
		$$ a_m = \sum_{\ell=1}^{m} \sum_{ \{i_1,\ldots,i_\ell\} \subseteq [m]} d_{i_1} \cdots d_{i_\ell} \,.$$
\end{lemma}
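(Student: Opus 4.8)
The plan is to recognize the right-hand side as the sum of $\prod_{i \in S} d_i$ over all \emph{nonempty} subsets $S \subseteq [m]$, and then establish the identity by induction on $m$. Indeed, fixing a cardinality $\ell$ and summing over all $\ell$-element subsets accounts for every nonempty subset of $[m]$ exactly once, so
$$\sum_{\ell=1}^{m} \sum_{\{i_1,\ldots,i_\ell\} \subseteq [m]} d_{i_1}\cdots d_{i_\ell} = \sum_{\emptyset \ne S \subseteq [m]} \prod_{i \in S} d_i.$$
Writing $b_m$ for this quantity, the goal becomes to show $a_m = b_m$.

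The base case $m=1$ is immediate, since the only nonempty subset of $[1]$ is $\{1\}$, giving $b_1 = d_1 = a_1$. For the inductive step I would assume $a_{m-1} = b_{m-1}$ and split the nonempty subsets of $[m]$ according to whether they contain the element $m$. The subsets avoiding $m$ are precisely the nonempty subsets of $[m-1]$, and contribute $b_{m-1}$. Every subset $S$ containing $m$ can be written uniquely as $S = T \cup \{m\}$ with $T \subseteq [m-1]$ arbitrary (possibly empty), and then $\prod_{i \in S} d_i = d_m \prod_{i \in T} d_i$; summing over all such $T$ gives $d_m\bigl(1 + b_{m-1}\bigr)$, where the summand $1$ comes from the empty $T$. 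Adding the two contributions yields
$$b_m = b_{m-1} + d_m\bigl(1 + b_{m-1}\bigr) = (d_m + 1)\,b_{m-1} + d_m,$$
which is exactly the recursion defining $a_m$. Combined with $b_{m-1} = a_{m-1}$ this gives $b_m = a_m$, completing the induction.

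There is essentially no serious obstacle here; the proof is a short combinatorial induction. The one point requiring care is the bookkeeping in the inductive step, namely remembering that the sets $T \subseteq [m-1]$ paired with $m$ include the empty set, which is precisely what produces the extra additive term $d_m$ matching the recursion. As a sanity check one may also note that $b_m = \prod_{i=1}^{m}(1 + d_i) - 1$, so the whole statement is just the expansion of this product into elementary symmetric sums; verifying directly that $\prod_{i=1}^{m}(1+d_i) - 1$ satisfies the recursion gives an equally brief alternative proof.
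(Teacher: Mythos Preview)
Your proof is correct and follows essentially the same approach as the paper: both argue by induction on $m$, verify the base case $m=1$, and carry out the inductive step by splitting the nonempty subsets of $[m]$ according to whether they contain the index $m$. Your presentation via the auxiliary quantity $b_m$ and the closing remark about $\prod_{i=1}^m(1+d_i)-1$ are minor cosmetic additions, but the substance is the same.
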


\begin{proof}
If $m=1$, the closed formula above gives us $a_1 = \sum_{\ell=1}^{1} \sum_{ \{i_\ell\} \subseteq \{1\}} d_j = d_1$. For $m>1$ we have
\begin{align*}
	a_m &= \left( d_{m} +1 \right) a_{m-1} + d_{m} 
		= \left( d_m+1\right) \left( \sum_{\ell=1}^{m-1} \sum_{ \{i_1,\ldots,i_\ell\} 
				\subseteq [m-1]} d_{i_1} \cdots d_{i_\ell} \right) + d_{m}\\
		&=	\sum_{\ell=1}^{m-1} \sum_{ \{i_1,\ldots,i_\ell\} 
					\subseteq [m-1]} d_{i_1} \cdots d_{i_\ell} \cdot d_m  + 
				\sum_{\ell=1}^{m-1} \sum_{ \{i_1,\ldots,i_\ell\} 
					\subseteq [m-1]} d_{i_1} \cdots d_{i_\ell}
				+ d_{m}\\
		&=	\sum_{\ell=1}^{m} \sum_{ \{i_1,\ldots,i_\ell\} \subseteq [m]} d_{i_1} \cdots d_{i_\ell} 
	\,,
\end{align*}
which completes the proof.
\end{proof}

When dealing with distances it is also useful to note that the following observation holds.
\begin{proposition}
	\label{prop:distances}
	Let $m\ge 2$,  and let $ G_1 ,\ldots ,G_m $ be arbitrary graphs.  
	Let $f_{1}: V(G_2) \rightarrow V(G_{1})$, $\ldots$, $f_{m-1}: V(G_m) \rightarrow V(G_{m-1})$ 
	be arbitrary functions.
	Finally, let $g$, $g'$ be vertices in $G_m \otimes_{f_{m-1}} \cdots \otimes_{f_1} G_1$ 
	and let $\ell \in \{1,\ldots,m\}$ be the greatest index of coordinates in which $g$ and $g'$ 
	differ, i.e., $g = (g_m,\ldots,g_{\ell+1},g_\ell,\ldots,g_1) =: \underline{g}g_\ell\ldots g_1$ 
	and $g' = (g_m,\ldots,g_{\ell+1},g_\ell',\ldots, g_1') =: \underline{g} g_\ell'\ldots g_1'$. 
	Then $g$ and $g'$ belong to the same copy of 
	$H := G_{\ell} \otimes_{f_{\ell-1}} \cdots \otimes_{f_1} G_1$, and 
		$$d_G(g,g') = d_H(g_\ell \ldots  g_1,g_\ell'\ldots g_1') \,.$$
\end{proposition}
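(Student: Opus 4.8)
The statement has two parts, and the first is immediate from the generalized copy notation: since $\ell$ is the largest index at which $g$ and $g'$ differ, they share the common prefix $\underline{g}=g_m\ldots g_{\ell+1}$, so by definition both lie in the copy $\underline{g}H$ with $H=G_\ell\otimes_{f_{\ell-1}}\cdots\otimes_{f_1}G_1$. Writing $K:=G_m\otimes_{f_{m-1}}\cdots\otimes_{f_1}G_1$ for the whole product (whose distance is denoted $d_G$ in the statement), the inequality $d_K(g,g')\le d_H(g_\ell\ldots g_1,g_\ell'\ldots g_1')$ is also immediate, since $\underline{g}H$ is an induced subgraph isomorphic to $H$ by Lemma~\ref{lem:HGcopy}(i). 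So the entire content is the reverse inequality, i.e.\ that $\underline{g}H$ is \emph{isometrically} embedded in $K$. The plan is to prove this by viewing $K$ recursively as the two-factor product $G_m\otimes_{\varphi\circ f_{m-1}}K(m-1)$, with $K(m-1)=G_{m-1}\otimes_{f_{m-2}}\cdots\otimes_{f_1}G_1$, and inducting on $m-\ell$; the base case $\ell=m$ (where $H=K$) is trivial.

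The heart of the argument is the following two-factor claim, which I would isolate as a lemma: in any $G\otimes_f H$, every copy $gH$ is isometric, that is $d_{G\otimes_f H}(u,v)=d_H(\pi(u),\pi(v))$ for all $u,v\in gH$, where $\pi(g,h)=h$. To prove the nontrivial direction I would take an \emph{arbitrary} $u$--$v$ path $P$ and project it onto $G$: recording the copies it visits between consecutive connecting edges yields a closed walk $g=w_0,w_1,\ldots,w_t=g$ in $G$, where $t$ is the number of connecting edges on $P$. Cutting $P$ at its connecting edges decomposes it into within-copy segments, each a walk inside some $w_iH\cong H$. By the form of a connecting edge, the segment lying in $w_iH$ runs between the ports $(w_i,f(w_{i-1}))$ and $(w_i,f(w_{i+1}))$, hence has length at least $d_H(f(w_{i-1}),f(w_{i+1}))$, while the first and last segments contribute $d_H(\pi(u),f(w_1))$ and $d_H(f(w_{t-1}),\pi(v))$. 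Writing $a_i:=f(w_i)$ (so $a_0=a_t=f(g)$), $p:=\pi(u)$ and $q:=\pi(v)$, summing gives
$$\mathrm{len}(P)\ \ge\ t + d_H(p,a_1) + \sum_{i=1}^{t-1} d_H(a_{i-1},a_{i+1}) + d_H(a_{t-1},q).$$

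It then remains to check this lower bound is itself at least $d_H(p,q)$, which I would do by assembling a chain of triangle inequalities in $H$ using exactly the terms above: start $p\to a_1$, follow the odd-index chain $a_1\to a_3\to\cdots$ (each step being a term $d_H(a_{i-1},a_{i+1})$), and finish with $a_{t-1}\to q$. When $t$ is even this already links $p$ to $q$ through the odd indices; when $t$ is odd the odd chain terminates at $a_t$, and the key point is that $a_t=a_0=f(g)$, so it feeds into the even-index chain $a_0\to a_2\to\cdots\to a_{t-1}$ and again reaches $q$. In both cases the triangle inequality gives $\mathrm{len}(P)\ge d_H(p,q)$ (the $t$ and the unused terms are nonnegative), and since $P$ was arbitrary this proves $gH$ is isometric. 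Note this uses only the definition of connecting edges and Lemma~\ref{lem:HGcopy}(i); neither local injectivity of $f$ nor any extra connectivity is needed.

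Finally I would close the induction. For $\ell<m$ the vertices $g,g'$ agree in coordinate $m$, hence lie in the same top-level copy $g_mK(m-1)$, which is isometric by the two-factor lemma (applied with $H=K(m-1)$), so $d_K(g,g')=d_{K(m-1)}(g_{m-1}\ldots g_1,g_{m-1}'\ldots g_1')$; since these projections still differ at largest index $\ell$, the inductive hypothesis for $K(m-1)$ equates this with $d_H(g_\ell\ldots g_1,g_\ell'\ldots g_1')$, finishing the proof. The only genuine obstacle is the two-factor isometry, and within it the single subtle point is the parity bookkeeping in the telescoping: the coincidence $a_0=a_t=f(g)$ is precisely what bridges the odd- and even-index chains and closes the estimate when $t$ is odd. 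Everything else is routine.
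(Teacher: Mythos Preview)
Your proof is correct. The paper does not actually supply a proof of this proposition; it is stated as an ``observation'' immediately before Proposition~\ref{prop:diameter}, presumably on the grounds that the recursive structure makes the claim intuitively evident. Your argument fills this gap, and the two-factor isometry claim you isolate---that each copy $gH$ is isometrically embedded in $G\otimes_f H$---is exactly the right lemma to extract. The projection of an arbitrary $u$--$v$ path to a closed walk in $G$, the decomposition into within-copy segments bounded below by $d_H(a_{i-1},a_{i+1})$, and the parity-dependent chaining via the triangle inequality all go through as you describe; the observation that $a_0=a_t=f(g)$ bridges the odd and even chains in the odd-$t$ case is indeed the one subtle point, and you handle it correctly. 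The induction on $m-\ell$ then closes the argument.
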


\begin{proposition}
	\label{prop:diameter}
	Let $m\ge 2$,  and let $ G_1 ,\ldots ,G_m $ be arbitrary graphs.  
	Further let $f_{1}: V(G_2) \rightarrow V(G_{1})$, $\ldots$, 
	$f_{m-1}: V(G_m) \rightarrow V(G_{m-1})$ be arbitrary functions.
	Then
		$$ 	
		\text{{\em diam}} \left( G_m \otimes_{f_{m-1}} \cdots \otimes_{f_1} G_1 \right) \leq 
		\sum_{\ell=1}^{m} \sum_{ \{i_1,\ldots,i_\ell\} \subseteq [m]} 
			\text{{\em diam}}\left(G_{i_1}\right) \cdots \text{{\em diam}}\left(G_{i_\ell} \right) 	\,.
		$$
\end{proposition}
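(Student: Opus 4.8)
Write $d_\ell := \mathrm{diam}(G_\ell)$ and $K(\ell) := G_\ell \otimes_{f_{\ell-1}} \cdots \otimes_{f_1} G_1$ for $1 \le \ell \le m$, so that $K(1) = G_1$ and $K(m)$ is the product in question. The plan is to reduce the statement to the single recursive inequality
$$\mathrm{diam}(K(\ell)) \le (d_\ell + 1)\,\mathrm{diam}(K(\ell-1)) + d_\ell, \qquad \ell \ge 2,$$
and then to invoke Lemma~\ref{lem:recursion}. Indeed, let $a_\ell$ be the sequence defined by $a_\ell = (d_\ell+1)a_{\ell-1} + d_\ell$ and $a_1 = d_1$. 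Applying Lemma~\ref{lem:recursion} to this sequence yields exactly the closed form $a_m = \sum_{\ell=1}^m \sum_{\{i_1,\dots,i_\ell\}\subseteq[m]} d_{i_1}\cdots d_{i_\ell}$, which is the right-hand side of the claim once we recall $d_i = \mathrm{diam}(G_i)$. Since $\mathrm{diam}(K(1)) = d_1 = a_1$ and the coefficient $d_\ell + 1$ is positive, a routine induction using the displayed inequality gives $\mathrm{diam}(K(m)) \le a_m$, which is the assertion. (If some factor is disconnected, then $K(m)$ is disconnected by Proposition~\ref{prop:connected} and the bound is vacuous, so we may assume every factor, hence every $K(\ell)$, is connected.)

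It remains to prove the recursive inequality, and this is where the structure of the product enters. Recall that $K(\ell) = G_\ell \otimes_{\varphi\circ f_{\ell-1}} K(\ell-1)$ is assembled from $|G_\ell|$ copies of $K(\ell-1)$, one copy $g K(\ell-1)$ for each $g \in V(G_\ell)$, each isomorphic to $K(\ell-1)$ by Lemma~\ref{lem:HGcopy}~(i), and that for every edge $\{g,g'\} \in E(G_\ell)$ there is exactly one connecting edge joining $g K(\ell-1)$ and $g' K(\ell-1)$ by Lemma~\ref{lem:atmostone}~(i). Given two vertices $u,v$ of $K(\ell)$ lying in the copies indexed by $u_\ell, v_\ell \in V(G_\ell)$, I would choose a geodesic $u_\ell = w_0, w_1, \dots, w_k = v_\ell$ in $G_\ell$, so that $k \le d_\ell$ and it visits $k+1$ distinct vertices. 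This geodesic determines a walk in $K(\ell)$ passing through the copies $w_0 K(\ell-1), \dots, w_k K(\ell-1)$ in order: inside each copy one travels, along a shortest path of that copy, from the entry vertex to the prescribed endpoint of the connecting edge leading to the next copy, and then one crosses that single connecting edge. Concatenating these $k+1$ intra-copy subpaths and $k$ connecting edges produces a $u$--$v$ walk of length at most $(k+1)\,\mathrm{diam}(K(\ell-1)) + k \le (d_\ell+1)\,\mathrm{diam}(K(\ell-1)) + d_\ell$; this also covers the case $u_\ell = v_\ell$, where $k=0$. Maximising over all $u,v$ then yields the recursive inequality.

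The only genuinely delicate point is the bookkeeping of the connecting vertices: within a copy $w_i K(\ell-1)$ the entry vertex (the endpoint of the connecting edge from $w_{i-1}$) and the exit vertex (the endpoint of the connecting edge to $w_{i+1}$) are specific vertices determined by $\varphi\circ f_{\ell-1}$, and one must check that a path of length at most $\mathrm{diam}(K(\ell-1))$ between them really lives inside the copy. This holds because any path contained in a copy is also a path of $K(\ell)$, so the internal diameter of the copy is at most $\mathrm{diam}(K(\ell-1))$; Proposition~\ref{prop:distances} makes this precise by showing that distances within a copy coincide with distances in the smaller product, which is more than enough for an upper bound. I expect no obstacle beyond writing the concatenation carefully and confirming that following a \emph{geodesic} of $G_\ell$ — rather than an arbitrary $u_\ell$--$v_\ell$ walk — keeps the number of visited copies at $k+1 \le d_\ell + 1$, which is precisely what produces the factor $d_\ell+1$ matching the recursion in Lemma~\ref{lem:recursion}.
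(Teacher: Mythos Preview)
Your proposal is correct and follows essentially the same approach as the paper: establish the recursive inequality $\mathrm{diam}(K(\ell)) \le (d_\ell+1)\,\mathrm{diam}(K(\ell-1)) + d_\ell$ by threading a geodesic of $G_\ell$ through the copies of $K(\ell-1)$, and then appeal to Lemma~\ref{lem:recursion} to close the recursion. If anything, your write-up is slightly more careful than the paper's (you make the induction from the inequality to $\mathrm{diam}(K(m))\le a_m$ explicit and dispose of the disconnected case), but the strategy is the same.
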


\begin{proof} 
Denote $G = G_m \otimes_{f_{m-1}} \cdots \otimes_{f_1} G_1 $ and let 
$g = g_m \ldots g_1$, $g' = g_m' \ldots g_1'$ be vertices of $G$.
Due to Proposition~\ref{prop:distances} let us assume that $g$ and $g'$ differ in the $m$-th 
coordinate. Because the vertices are in different copies of 
$H := G_{m-1} \otimes_{f_{m-2}} \cdots \otimes_{f_1} G_1$, we have to find a shortest 
path from $g_mH$ to $g_m'H$, and such path has length at most 
\begin{equation}
	\label{eq:diam}
	\text{diam} (H) (\text{diam} (G_{m}) + 1 ) +\text{diam} (G_{m}) \,,
\end{equation}
because in worst case we have to cross both graphs $g_mH$ and $g_m'H$, but also some 
other copies isomorphic to $H$. Note that on such shortest path we cannot cross more 
subgraphs isomorphic to $H$ than $\text{diam} (G_{m}) + 1$, otherwise we would have a path 
in $G_m$ that is longer than its diameter $\text{diam} (G_{m})$. Every time we cross between 
the subgraphs we add another edge to our shortest path, and this happens in at most 
$\text{diam} (G_{m})$ cases.

The result now follows from the fact that \eqref{eq:diam} satisfies the recursion in 
Lemma~\ref{lem:recursion} with 
$a_m := \text{diam} \left( G_m \otimes_{f_{m-1}} \cdots \otimes_{f_1} G_1 \right)$, and 
$d_m := \text{diam} \left( G_m \right)$ for $m\in\mathbb{N}$.
\end{proof}


In the next examples the above bound is tight.
\begin{example}
\label{ex:paths}
Let $n,m\ge 2$ and $f: V(P_n)=\{1,\ldots,n\} \rightarrow V(P_m)=\{1,\ldots,m\}$ be defined as
$$
f(i) = \left\lbrace \begin{array}{ll}
1\ & \ \mbox{if} \ i \equiv 1,2 \mod 4,\\
m\ & \ \mbox{if} \  i \equiv 0,3 \mod 4.
\end{array}
\right.
$$
Then, diam$(P_n\otimes_f P_m) = nm -1 = $ diam $(P_n)$ diam $(P_m) + $ diam $(P_n) +$ diam $(P_m)$, which equals the upper bound from Proposition~\ref{prop:diameter} (cf. Figure~\ref{fig:examplePaths}).

\begin{figure}[htb!]
\centering
\begin{tikzpicture}[scale = 0.58, style = thick]
\def\vr{3pt}
	\draw (0,4)--(0.5,2)--(1,0)--(2.5,0)--(3,2)--(3.5,4)--(5,4)--(5.5,2)
		--(6,0)--(7.5,0)--(8,2)--(8.5,4)--(10,4)--(10.5,2)--(11,0)--(12.5,0)
		--(13,2)--(13.5,4)--(15,4)--(15.5,2)--(16,0)--(17.5,0)--(18,2)--(18.5,4)
		--(20,4)--(20.5,2)--(21,0)--(22.5,0)--(23,2)--(23.5,4);
\begin{small}
	\draw [fill=white] (0,4) circle (\vr) node [anchor = south east] {16};
	\draw [fill=white] (0.5,2) circle (\vr) node [anchor = east] {15};
	\draw [fill=white] (1,0) circle (\vr) node [anchor = north east] {14};
	\draw [fill=white] (2.5,0) circle (\vr) node [anchor = north west] {13};
	\draw [fill=white] (3,2) circle (\vr) node [anchor = west] {12};
	\draw [fill=white] (3.5,4) circle (\vr) node [anchor = south east] {11};
	\draw [fill=white] (5,4) circle (\vr) node [anchor = south west] {21};
	\draw [fill=white] (5.5,2) circle (\vr) node [anchor = east] {22};
	\draw [fill=white] (6,0) circle (\vr) node [anchor = north east] {23};
	\draw [fill=white] (7.5,0) circle (\vr) node [anchor = north west] {24};
	\draw [fill=white] (8,2) circle (\vr) node [anchor = west] {25};
	\draw [fill=white] (8.5,4) circle (\vr) node [anchor = south east] {26};
	\draw [fill=white] (10,4) circle (\vr) node [anchor = south west] {31};
	\draw [fill=white] (10.5,2) circle (\vr) node [anchor = east] {32};
	\draw [fill=white] (11,0) circle (\vr) node [anchor = north east] {33};
	\draw [fill=white] (12.5,0) circle (\vr) node [anchor = north west] {34};
	\draw [fill=white] (13,2) circle (\vr) node [anchor = west] {35};
	\draw [fill=white] (13.5,4) circle (\vr) node [anchor = south east] {36};
	\draw [fill=white] (15,4) circle (\vr) node [anchor = south west] {46};
	\draw [fill=white] (15.5,2) circle (\vr) node [anchor = east] {45};
	\draw [fill=white] (16,0) circle (\vr) node [anchor = north east] {44};
	\draw [fill=white] (17.5,0) circle (\vr) node [anchor = north west] {43};
	\draw [fill=white] (18,2) circle (\vr) node [anchor = west] {42};
	\draw [fill=white] (18.5,4) circle (\vr) node [anchor = south east] {41};
	\draw [fill=white] (20,4) circle (\vr) node [anchor = south west] {56};
	\draw [fill=white] (20.5,2) circle (\vr) node [anchor = east] {55};
	\draw [fill=white] (21,0) circle (\vr) node [anchor = north east] {54};
	\draw [fill=white] (22.5,0) circle (\vr) node [anchor = north west] {53};
	\draw [fill=white] (23,2) circle (\vr) node [anchor = west] {52};
	\draw [fill=white] (23.5,4) circle (\vr) node [anchor = south west] {51};
\end{small}
\end{tikzpicture}
\caption{A visualization of Example~\ref{ex:paths} for $n=5$ and $m=6$.}
\label{fig:examplePaths}
\end{figure}
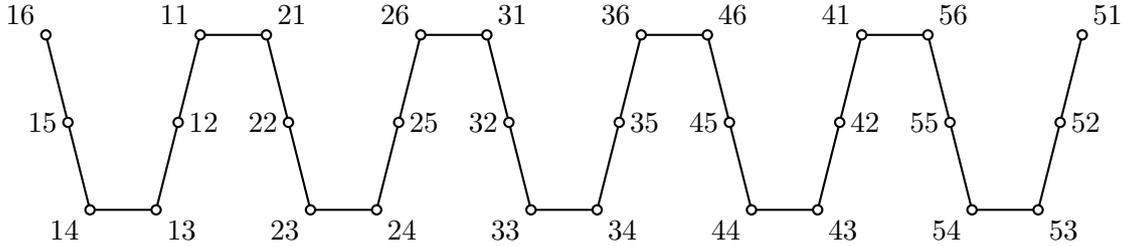
\end{example}

\begin{example}
Let $p\ge 2$, $n\ge 1$. Then $K_p \otimes \cdots \otimes K_p = S_p^n$.
It is a well-known fact that diam$(K_p) = 1$, and it is also known (cf. \cite[Proposition 2.12]{HKZ2017}) that diam$\left( S_p^n \right) = 2^n-1$. The upper bound in Proposition~\ref{prop:diameter} for the case of $S_p^n$ equals $\sum_{\ell=1}^{n} \sum_{ \{i_1,\ldots,i_\ell\} \subseteq [n]} 1$, which is just the number of non-empty subsets of $[n] = \{1,\ldots,n\}$ and there are exactly $2^n-1$ such subsets, so the bound is tight for $S_p^n$.
\end{example}

Distances are also important for the following open problem.

\begin{problem}
 Suppose all graphs $G_{\ell}$ are connected. 
 What can we say about the girth of $G_m \otimes_{f_{m-1}} \cdots \otimes_{f_1} G_1$ ? 
 How is it related to the girths of its factors?
\end{problem}

\section{Conclusion}

This paper generalizes Sierpi\'nski graphs even further than generalized Sierpi\'nski graphs, where the whole structure is based only on one graph. Here we create a product like structure of two (or more) factors. Some basic graph theoretical properties are studied in detail, and planar Sierpi\'nski products are completely characterized. Apart from this the symmetries of Sierpi\'nski products are studied as well. In general, these are not fully understood. In many cases we are able to determine the automorphism group of Sierpi\'nski product of two graphs exactly. 


In \cite{ImrichPeterin} an algorithm is given for recognizing generalized Sierpi\'nski graphs.
Given a graph it is also natural to ask whether it can be represented as a Sierpi\'nski product of two or more graphs. Moreover, one can ask if such a representation is unique. 
The latter question has a negative answer.
Consider the Sierpi\'nski product of $C_4$ and $2K_3+e$ with function $f$ as in Figure \ref{fig:counterexample}. 
It can be easily verified that it is isomorphic to $C_8 \otimes_{f'} C_3$ where $f':V(C_8) \to V(C_3)$ 
is defined by $f'(1)=f'(2)=f'(5)=f'(6)=1$ and
$f'(3)=f'(4)=f'(7)=f'(8)=2$. However, in this case not all the factors are prime 
with respect to the Sierpi\'nski product: $C_8$ can be represented as a Sierpi\'nski product
of $C_4$ and $K_2$ while $2K_3+e$ can be represented as a Sierpi\'nski product
of $K_2$ and $C_3$. It would be interesting to see whether there exist prime graphs
with respect to the Sierpi\'nski product  $G,H,G',H'$  and functions $f:V(G) \to V(H)$,
$f':V(G') \to V(H')$ such that $G,H$ are not isomorphic to $G',H'$ while
$G \otimes_f H$ is isomorphic to $G' \otimes_{f'} H'$.

\section*{Acknowledgements}

The authors would like to thank Wilfried Imrich for fruitful discussion.
This work was supported in part by
`Agencija za raziskovalno dejavnost Republike Slovenije' via 
Grants  P1--0294, J1--9187, J1--7051, J1--7110 and N1--0032.

\newpage
\noindent
{\sc Jurij Kovi\v{c}}  \\
Institute of Mathematics, Physics, and Mechanics\\
Jadranska 19, 1000 Ljubljana, Slovenia\\
and \\
University of Primorska, FAMNIT \\
Glagolja\v ska 8, 6000 Koper, Slovenia\\
\texttt{jurij.kovic@siol.net}\\

\noindent
{\sc Toma\v z Pisanski} \\ 
University of Primorska, FAMNIT \\
Glagolja\v ska 8, 6000 Koper, Slovenia\\
and\\
Institute of Mathematics, Physics, and Mechanics\\
Jadranska 19, 1000 Ljubljana, Slovenia\\
\texttt{tomaz.pisanski@upr.si}\\

\noindent
{\sc Sara Sabrina Zemlji\v c}\\ 
Comenius University, Bratislava, Slovakia
\\ 
and\\
Institute of Mathematics, Physics, and Mechanics\\
Jadranska 19, 1000 Ljubljana, Slovenia\\
\texttt{sara.zemljic@gmail.com}\\

\noindent
{\sc Arjana \v Zitnik}\\ 
University of Ljubljana, Faculty of Mathematics and Physics\\
Jadranska 19, 1000 Ljubljana, Slovenia\\ 
and\\
Institute of Mathematics, Physics, and Mechanics\\
Jadranska 19, 1000 Ljubljana, Slovenia\\
\texttt{arjana.zitnik@fmf.uni-lj.si}\\ 

\end{document}